\documentclass{article}
\usepackage{tikz}
\usetikzlibrary{positioning}
\usepackage{amssymb}
\usepackage{amsmath}
\usepackage{amsthm}
\usepackage{bm}
\usepackage{enumerate}
\usepackage{hyperref}

\theoremstyle{plain}
\newtheorem{thm}{Theorem}[section]

\theoremstyle{definition}
\newtheorem{defn}[thm]{Definition}

\newtheorem{corol}[thm]{Corollary}
\newtheorem{remar}[thm]{Remark}
\newtheorem{lemma}[thm]{Lemma}

\title{On the complexity of sets of uniqueness and extended uniqueness}
\author{João Paulos \\ Chalmers University of Technology}
\date{}

\begin{document}
\maketitle
\begin{abstract}
\noindent We locate the descriptive set theoretic complexity of the family of closed sets of uniqueness - $\mathcal{U}(G)\subseteq\mathcal{F}(G)$ - for locally compact connected Lie groups $G$ and of the family of closed sets of extended uniqueness - $\mathcal{U}_{0}(G)\subseteq\mathcal{F}(G)$ - for connected abelian Lie groups $G$, where $\mathcal{F}(G)$ denotes the set of closed subsets of $G$. More concretely, with respect to the Effros Borel space on $\mathcal{F}(G)$ we determine that under the aforementioned assumptions on $G$, $\mathcal{U}(G)$ and $\mathcal{U}_{0}(G)$ are $\Pi_{1}^{1}$-complete.
\end{abstract}

\section{Introduction}

\noindent The study of sets of uniqueness has a long and illustrious history. In his \emph{Habilitationsschrift}, Riemann suggested the problem of determining whether or not a representation of a function by a trigonometric series, whenever it exists, is unique. In modern terminology, this ammounts to ask whether or not the empty set is a set of uniqueness (for $\mathbb{T}$). Working on this problem, Cantor was led towards the creation of set theory. Lebesgue, Bernstein and Young strengthened Cantor results and eventually established that every countable subset of $\mathbb{T}$ is a set of uniqueness. The structure of sets of uniqueness of $\mathbb{T}$ was extensively studied during the first three decades of the last century, most notably by Russian and Polish schools. Menshov provided an example of a closed null set of multiplicity and Bari and Rajchman showed that there are non-empty perfect sets of uniqueness. It is fair to say that these results are not only counter-intuitive but also a faithful witness of the difficulties concerning a satisfactory characterization of sets of uniqueness, further enriching the depth of the subject. For a historical survey on the matter, which includes open problems reminiscent of the earlier days of this subject, the reader is referred to \cite{11}.  \\ During the 50's, the study of thin sets in harmonic analysis - including closed sets of uniqueness - gained a considerable new interest. Piatetski-Shapiro reformulated the language of the subject, bringing functional analytic methods into the study of sets of uniqueness of $\mathbb{T}$ and proving that there are closed sets of extended uniqueness which are not of uniqueness. Furthermore, the celebrated Salem-Zygmund theorem exposed a deep connection of this subject with Number Theory.\\ In \cite{12}, Herz extends the definition of set of uniqueness for abelian locally compact groups and in \cite{1}, Bozeijko extends it to general locally compact groups.  The study of sets of uniqueness is a remarkable example of successful application of techniques which appear naturally in different branches of research. In particular, the relation between the concepts of operator M-sets and sets of multiplicity is investigated in \cite{2}, providing an useful operator theoretic framework to study the properties of sets of uniqueness.\\ Curiously enough, even though the birth of set theory emerged from early developments in the subject, both areas remained relatively isolated from each other until almost a century later. In the 1980's, the incorporation of Descriptive Set Theory techniques revealed to be extremely fruitful in the study of sets of uniqueness of $\mathbb{T}$. The incorporation of descriptive set-theoretic tools, not only sharpened the understanding of the subject but also provided new insights from which earlier results appear from a more abstract context. The successful merging of analysis with descriptive set theoretic tools was led primarily by Debs-Saint Raymond, Kaufman, Kechris, Louveau, Woodin and Solovay. For a comprehensive introduction to this angle on sets of uniqueness, the reader is referred to \cite{3}. An important descriptive set-theoretic flavoured result which emerged from this approach was the classification of the complexity of the family of closed sets of uniqueness of $\mathbb{T}$ - $\mathcal{U}(\mathbb{T})$ - and of the family of closed sets of extended uniqueness of $\mathbb{T}$ - $\mathcal{U}_{0}(\mathbb{T})$ - both located at the level of $\Pi_{1}^{1}$-complete sets of $\mathcal{F}(\mathbb{T})$. In this paper, we prove these results in greater generality - i.e. we obtain such classifications for groups different than $\mathbb{T}$. More concretely, we prove that $\mathcal{U}(G)$ - for a locally compact connected Lie group $G$ - is $\Pi_{1}^{1}$-complete and that $\mathcal{U}_{0}(G)$ - for a connected abelia Lie group $G$ - is also $\Pi_{1}^{1}$-complete.  \\ \\ \noindent In Section 2, we set up our general framework : on one hand, we set up the modern operator-theoretic framework for sets of uniqueness in general locally compact groups and, on the other hand, we introduce the elementary descriptive set-theoretic context on which we will study the complexity of $\mathcal{U}(G)$ and $\mathcal{U}_{0}(G)$.\\ \\ In Section 3, we rely heavily on \cite{2} and \cite{6} in order to prove certain preservation properties of sets of uniqueness and sets of extended uniquess regarding finite direct products and inverse images. In particular, having in mind our end goal of classifying the complexity of $\mathcal{U}_{0}(G)$ for connected abelian Lie groups, we prove new results concerning these preservation properties for sets of extended uniqueness. \\ \\ In Section 4, we prove that $\mathcal{U}(G)$ and $\mathcal{U}_{0}(G)$ are coanalytic sets with respect to the Effros Borel space of $\mathcal{F}(G)$ under fairly general assumptions on $G$. Combining results from Section 3 with the more classic previously mentioned study of the complexity of $\mathcal{U}(\mathbb{T})$ and $\mathcal{U}_{0}(\mathbb{T})$, we establish the complexity of $\mathcal{U}(G)$ - for a locally compact connected Lie group $G$ - and of $\mathcal{U}_{0}(G)$ - for a connected abelian Lie group $G$ - both as $\Pi_{1}^{1}$-complete sets of $\mathcal{F}(G)$.

\section{Preliminaries}

\noindent In section 2.1 we set up a framework to study sets of uniqueness on arbitrary locally compact groups, thus extending the classical definition for $\mathbb{T}$. In section 2.2, we introduce terminology and concepts from classical Descriptive Set Theory that will be used in further sections concerning the study of the complexity of sets of uniqueness.

\subsection{The framework for sets of uniqueness}

We start by recalling the classical definition of \emph{set of uniqueness}. A subset $E\subseteq\mathbb{T}$ is said to be a set of uniqueness if the following holds : $$\sum_{n=-\infty}^{\infty}c_{n}e^{inx}=0, \ \text{for $x\notin E$} \Rightarrow \forall n\in\mathbb{Z} : c_{n}=0$$ where by equality with zero of the trigonometric series with coefficients $c_n\in\mathbb{C}$ we mean convergence of the partial sums $S_{N}=\sum_{n=-N}^{N}c_{n}e^{inx}$, as $N\rightarrow\infty$. \\ Recall that whenever such trigonometric series converges to zero on a set of positive measure, then $(c_n)\in c_{0}(\mathbb{Z})$ by Cantor-Lebesgue lemma. It is worth to mention that while every set of uniqueness has measure zero (cf. \cite{3}, Proposition 1 - I.3), the converse is false. Indeed, there are sets of measure zero which are not sets of uniqueness (cf. \cite{3}, Theorem 5 - III.4). The reader may thus find Definition \ref{unique} natural, as the classic case occurs when $G=\mathbb{T}$ (hence, $\hat{G}=\mathbb{Z}$). \\ If a subset $E\subseteq\mathbb{T}$ is not a set of uniqueness, then it is said to be a set of multiplicity. The collection of closed sets of uniqueness is denoted by $\mathcal{U}(\mathbb{T})$ and the collection of closed sets of multiplicity is denoted by $\mathcal{M}(\mathbb{T})$. Both collections will be considered in further sections as subsets of the hyperspace $\mathcal{K}(\mathbb{T})$ of closed subsets of the circle endowed with the Vietoris topology. \\ \\ Another important class of subsets of $\mathbb{T}$ are the so called \emph{sets of extended uniqueness}. A subset $E\subseteq\mathbb{T}$ is said to be a set of extended uniqueness if the following holds : $$\sum_{n=-\infty}^{\infty}\hat{\mu}(n)e^{inx}=0, \ \text{for $x\notin E$} \Rightarrow \forall n\in\mathbb{Z} : \hat{\mu}(n)=0$$ where $\mu$ is a Borel measure on $\mathbb{T}$ and $\hat{\mu}(n)$ denote its Fourier-Stieltjes coefficients. A set which is not a set of extended uniqueness is said to be a set of restricted multiplicity. The family of closed sets of extended uniqueness is denoted by $\mathcal{U}_{0}(\mathbb{T})$ and we set $\mathcal{M}_{0}(\mathbb{T})=\mathcal{K}(\mathbb{T})\setminus\mathcal{U}_{0}(\mathbb{T})$.\\ \\ It is clear that $\mathcal{U}(\mathbb{T})\subseteq\mathcal{U}_{0}(\mathbb{T})$ and in fact, the inclusion is strict (cf. \cite{3}, Corollary 17 - VII.3). \\ \\ We now extend these notions of uniqueness for locally compact groups $G$. \\ \\ Henceforth, $M(G)$ denotes the Banach algebra of complex Borel measures on $G$ with respect to convolution of measures. We will denote by $L^{p}(G)$ the Lebesgue space with respect to the Haar measure on $G$ and we recall that $L^{1}(G)$ is canonically identified as a closed ideal of $M(G)$. As usual, $\lambda : G\rightarrow \mathcal{L}(L^{2}(G))$ with $\lambda_{s}:=\lambda(s)$ denotes the left regular representation of $G$. We also use $\lambda$ to denote the representation of $M(G)$ on $L^{2}(G)$ given by : $$\lambda(\mu)(\xi)(g)=\int_{G}\xi(s^{-1}g)d\mu(s)=\int_{G}(\lambda_{s}\xi)(g)d\mu(s)$$ for $\mu\in M(G)$ and $\xi\in L^{2}(G)$. \\ The reduced group C$^{\ast}$-algebra of $G$ is the following C$^{\ast}$-subalgebra : $$C_{r}^{\ast}(G)=\overline{\lbrace\lambda(f):f\in L^{1}(G)\rbrace}^{||.||}$$ The group von Neumann algebra of $G$ is denoted by $VN(G)$ and defined as $VN(G)=\overline{C_{r}^{\ast}(G)}^{w^{\ast}}$. The group C$^{\ast}$-algebra of $G$, denoted by $C^{\ast}(G)$, is the completion of $L^{1}(G)$ under the norm $||f||_{C^{\ast}}:=\sup_{\pi}\lbrace ||\pi(f)||\rbrace$, where the supremum is taken over all unitary equivalence classes of irreducible representations $\pi$ of  $G$.\\ The Fourier-Stieltjes algebra of $G$ will be denoted by $B(G)$. As a set it consists of all functions $u:G\rightarrow\mathbb{C}$ of the form : $$u(s)=(\pi(s)\xi,\eta)$$ where $\pi:G\rightarrow\mathcal{L}(H)$ is a continuous unitary representation acting on some Hilbert space $H$ and $\xi,\eta\in H$. \\ The Fourier algebra of $G$ will be denoted by $A(G)$ and as a set it consists of all functions $u:G\rightarrow\mathbb{C}$ of the form : $$u(s)=(\lambda_{s}\xi,\eta)$$ for $\xi,\eta\in L^{2}(G)$. Those are algebras with respect to pointwise multiplication. One has that $A(G)$ is a closed ideal of $B(G)$ and that $A(G)$ can be identified with the predual of $VN(G)$ via the pairing :  $$\langle u,T\rangle =(T\xi, \eta)$$ where $u\in A(G)$ has the form $u(s)=(\lambda_{s}\xi,\eta)$. \\ If $T\in VN(G)$ and $u\in A(G)$, the operator $u.T\in VN(G)$ is given by the relation $\langle u.T,v\rangle =\langle T,uv\rangle$, for $v\in A(G)$. \\ \\ For further details and properties about these algebras, we refer the interested reader to \cite{2} and \cite{4}. \\ \\  We point out that if $G$ is abelian, then $C_{r}^{\ast}(G)$ coincides with $C_{0}(\hat{G})$ - where $\hat{G}$ is the dual group of $G$ - and $VN(G)$ with $L^{\infty}(\hat{G})$. Furthermore, in this case the Fourier transform gives an isometric *-isomorphism between $L^{1}(G)$ and $A(\hat{G})$. In particular, when $G$ is a locally compact abelian group we get that $L^{1}(\hat{G})\approx A(G)$. For instance, with $G=\mathbb{T}^{d}$ we recover the duality : $$A(\mathbb{T}^{d})^{\ast}\approx \ell^{1}(\mathbb{Z}^{d})^{\ast}\approx \ell^{\infty}(\mathbb{Z}^{d})\approx VN(\mathbb{T}^{d})$$

\begin{defn}\label{suporte2} Let $T\in VN(G)$ and $g\in G$. We define $supp^{\ast}(T)$ as the set of elements in $G$ which satisfy one of the following equivalent conditions (cf. Proposition 4.4, in \cite{4}) : \begin{enumerate}[(i)]\item{For every open neighborhood $\mathcal{V}$ of $g$, there is some element $u\in A(G)$ with supp($u$)$\subseteq\mathcal{V}$ and such that $\langle u,S\rangle\neq 0$.}\item{$u.T\neq 0$, whenever $u\in A(G)$ and $u(g)\neq 0$.}\end{enumerate}\end{defn}

\noindent For the sake of completeness, we list some properties of $supp^{\ast}(T)$, for $T\in VN(G)$ that will be used in further sections : \begin{lemma}\label{propsup} Let $T\in VN(G)$, then $supp^{\ast}(T)$ has the following properties : \begin{enumerate}[(i)]\item{If $T\neq 0$, then $supp^{\ast}(T)$ is a non-empty closed set.}\item{$supp^{\ast}(T)$ is the smallest closed set $E\subseteq G$ such that for all $u\in A(G)\cap C_{c}(G)$ with $\text{supp}(u)\cap E=\emptyset$ then $\langle u,T\rangle = 0$.}\item{If $T=\lambda(\mu)$ for $\mu\in M(G)$, then $supp^{\ast}(T)=\text{supp}(\mu)$.}\end{enumerate}\end{lemma}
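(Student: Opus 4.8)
The plan is to deduce all three items directly from the two equivalent conditions recorded in Definition \ref{suporte2}, together with a few standard structural facts about the Fourier algebra that may be cited from \cite{4}: that $A(G)$ is a regular, Tauberian commutative Banach algebra, that $A(G)\cap C_c(G)$ is norm-dense in $A(G)$ (and dense in $C_c(G)$ for the inductive-limit topology), and that $A(G)$ admits partitions of unity subordinate to any finite open cover of a compact set. Since $G$ is locally compact, every point has a base of relatively compact open neighborhoods, which is what will keep the functions produced below inside $A(G)\cap C_c(G)$ rather than merely in $A(G)$.

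For (i), closedness is immediate from condition (i) of Definition \ref{suporte2}: if $g\notin supp^{\ast}(T)$, the witnessing open neighborhood $\mathcal{V}$ works verbatim for every point of $\mathcal{V}$, so $G\setminus supp^{\ast}(T)$ is open. For non-emptiness, suppose $supp^{\ast}(T)=\emptyset$. Then each $g\in G$ has a relatively compact open neighborhood $\mathcal{V}_g$ such that $\langle v,T\rangle=0$ for every $v\in A(G)$ with $\mathrm{supp}(v)\subseteq\mathcal{V}_g$. Given any $u\in A(G)\cap C_c(G)$, cover the compact set $\mathrm{supp}(u)$ by finitely many $\mathcal{V}_{g_1},\dots,\mathcal{V}_{g_n}$, choose $\varphi_1,\dots,\varphi_n\in A(G)$ a partition of unity subordinate to this cover, and write $u=\sum_i \varphi_i u$ with $\mathrm{supp}(\varphi_i u)\subseteq\mathcal{V}_{g_i}$; then $\langle u,T\rangle=\sum_i\langle \varphi_i u,T\rangle=0$. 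Since $A(G)\cap C_c(G)$ is dense in $A(G)$, which is the predual of $VN(G)$, this forces $T=0$, a contradiction.

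For (ii), the same covering-and-partition-of-unity argument shows that $E_0:=supp^{\ast}(T)$ itself has the asserted property: if $u\in A(G)\cap C_c(G)$ with $\mathrm{supp}(u)\cap E_0=\emptyset$, then each point of the compact set $\mathrm{supp}(u)$ lies outside $E_0$ and hence has a relatively compact neighborhood against whose $A(G)$-functions $T$ pairs to zero; splitting $u$ over a finite subcover gives $\langle u,T\rangle=0$. For minimality, let $E$ be any closed set with this property and suppose, for contradiction, that there is $g\in E_0\setminus E$. Pick a relatively compact open $\mathcal{V}\ni g$ with $\overline{\mathcal{V}}\cap E=\emptyset$. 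By condition (i) of Definition \ref{suporte2} there is $u\in A(G)$ with $\mathrm{supp}(u)\subseteq\mathcal{V}$ and $\langle u,T\rangle\neq 0$; but then $u\in A(G)\cap C_c(G)$ and $\mathrm{supp}(u)\cap E=\emptyset$, so the defining property of $E$ yields $\langle u,T\rangle=0$, a contradiction. Hence $E_0\subseteq E$.

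For (iii), recall the weak integral $\lambda(\mu)=\int_G\lambda_s\,d\mu(s)$, so that pairing with $u\in A(G)$ of the form $u(s)=(\lambda_s\xi,\eta)$ gives $\langle u,\lambda(\mu)\rangle=\int_G u\,d\mu$. By (ii), $supp^{\ast}(\lambda(\mu))$ is the smallest closed $E$ such that $\int_G u\,d\mu=0$ for every $u\in A(G)\cap C_c(G)$ with $\mathrm{supp}(u)\cap E=\emptyset$, whereas $\mathrm{supp}(\mu)$ is by definition the smallest closed $E$ such that $\int_G f\,d\mu=0$ for every $f\in C_c(G)$ with $\mathrm{supp}(f)\cap E=\emptyset$. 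These two minimal sets coincide because, for a fixed closed $E$, any $f\in C_c(G)$ with $\mathrm{supp}(f)\cap E=\emptyset$ can be uniformly approximated by elements of $A(G)\cap C_c(G)$ supported in a slightly enlarged compact set still disjoint from $E$ (regularity of $A(G)$ and density of $A(G)\cap C_c(G)$ in $C_c(G)$), so $\int\cdot\,d\mu$ vanishes on one family exactly when it vanishes on the other. The argument is thus a straightforward unwinding of Definition \ref{suporte2}; the only points requiring care — and the natural place to lean on \cite{4} — are the existence of $A(G)$-partitions of unity and the two density statements, together with the systematic choice of relatively compact neighborhoods ensuring that all auxiliary functions remain compactly supported.
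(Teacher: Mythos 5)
Your proof is correct, but it is worth pointing out that it is not really comparable to anything in the paper: the paper offers no argument for Lemma \ref{propsup}, it simply cites \cite{4} (Prop.~4.6, Prop.~4.8 and Remark~4.7). What you have done is reconstruct, from the two conditions in Definition \ref{suporte2}, essentially Eymard's original proofs, importing from \cite{4} exactly the standard toolkit (regularity of $A(G)$, existence of $A(G)$-partitions of unity over compacta, density of $A(G)\cap C_{c}(G)$ in $A(G)$ and in $C_{c}(G)$). The places where care is genuinely needed are handled correctly: in the minimality step of (ii) you choose the neighborhood $\mathcal{V}$ relatively compact with $\overline{\mathcal{V}}\cap E=\emptyset$, which is what guarantees that the witness $u$ from Definition \ref{suporte2}(i) lies in $A(G)\cap C_{c}(G)$ and has support disjoint from $E$ (a sloppier choice of $\mathcal{V}$ would break exactly here); and in (i) the identity $u=\sum_{i}\varphi_{i}u$ together with norm-density of $A(G)\cap C_{c}(G)$ in the predual $A(G)$ is what forces $T=0$. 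For (iii), the identity $\langle u,\lambda(\mu)\rangle=\int_{G}u\,d\mu$ is the same fact the paper itself uses later (via \cite{17}, Theorem 2.3.9), and your transfer between the test families $A(G)\cap C_{c}(G)$ and $C_{c}(G)$ via uniform approximation with controlled supports is sound, since the approximants stay in a fixed compact set and $|\mu|$ is finite; the only cosmetic imprecision is calling the vanishing-of-integrals description of $\mathrm{supp}(\mu)$ a definition — for complex Radon measures it is equivalent to the usual smallest-closed-set-with-null-complement definition by the uniqueness part of the Riesz representation theorem, which deserves the one-line remark.
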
\begin{proof} The reader can find a proof in \cite{4} (respectively, Prop. 4.6, Prop. 4.8 and Remark 4.7).\end{proof}

\noindent For a closed subset $E\subseteq G$ we will use the following notation : $$J(E)^{\perp}=\lbrace T\in VN(G) : supp^{\ast}(T)\subseteq E\rbrace$$ For a motivation of such notation, we refer the interested reader to \cite{2}. \\ 

\noindent We adopt the definition sets of uniqueness which was introduced in \cite{1} and further generalized in \cite{2} : 

\begin{defn}\label{unique} Let $E\subseteq G$ be a closed subset. Then, $E$ is a set of uniqueness if $S\in C_{r}^{\ast}(G)$ and $supp^{\ast}(S)\subseteq E$, imply that $S=0$. Any closed set which is not a set of uniqueness, will be called a set of multiplicity (or a $M$-set). We denote the family of closed sets of uniqueness of $G$ by $\mathcal{U}(G)$ and the family of closed sets of multiplicity of $G$ by $\mathcal{M}(G)$. \end{defn}

\noindent For a closed subset $E\subseteq G$, we let $M(E)\subseteq M(G)$ be the subset of measures with support contained in $E$ and we adopt the definition of sets of extended uniqueness given in \cite{2} :

\begin{defn}\label{extendeduniq} Let $E\subseteq G$ be a closed subspace. Then, $E$ is a set of extended uniqueness if $\lambda(M(E))\cap C_{r}^{\ast}(G)=\lbrace 0\rbrace$. Any closed set which is not a set of extended uniqueness, will be called a set of restricted multiplicity (or a $M_0$-set). We denote the family of closed sets of extended uniqueness of $G$ by $\mathcal{U}_{0}(G)$ and the family of closed sets of restricted multiplicity of $G$ by $\mathcal{M}_{0}(G)$.\end{defn}

\begin{remar}\label{remarkk} Indeed, Definitions \ref{unique} and \ref{extendeduniq} generalize the classical notions : \begin{enumerate}[(i)]\item{Let $PM(\mathbb{T})$ be the space of pseudomeasures and $PF(\mathbb{T})$ be the space of pseudofunctions. Moreover, for a closed subset $E\subseteq\mathbb{T}$, let $PM(E)$ denote the subset of pseudomeasures supported on $E$. It is known that $PM(E)=J(E)^{\perp}$ (see \cite{14}).}\item{If $G$ is abelian or compact then the class of measures $\mu\in M(G)$ which satisfy $\lambda(\mu)\in C_{r}^{\ast}(G)$ coincide with the class of Rajchman measures, i.e. measures whose Fourier-Stieltjes coefficients vanish at infinity (see \cite{13}).}\item{One can reformulate the classical definition for elements in $\mathcal{M}(\mathbb{T})$ and $\mathcal{M}_{0}(\mathbb{T})$ as follows (cf. \cite{3}, Theorem 1 - II.4 and Propostion 6 - II.5): $$E\in\mathcal{M}(\mathbb{T})\Leftrightarrow PM(E)\cap PF(\mathbb{T})\neq\lbrace 0\rbrace$$ $$E\in\mathcal{M}_{0}(\mathbb{T})\Leftrightarrow M(E)\cap PF(\mathbb{T})\neq\lbrace 0\rbrace$$ Recalling that if $G$ is abelian one has that $C_{r}^{\ast}(G)\approx C_0(\hat{G})$, the conclusion follows from (i) and (ii).}\end{enumerate}\end{remar}

\subsection{Borel hierarchy and analytic sets}

\begin{defn} A Polish space is a completely metrizable separable topological space. A measurable space $(X,\Sigma)$ is a standard Borel space if there is some Polish topology $\mathcal{T}$ on $X$ such that its Borel algebra - i.e. the smallest $\sigma$-algebra containing $\mathcal{T}$ - coincides with $\Sigma$. In other words, if $\mathcal{B}(\mathcal{T})=\Sigma$.\end{defn}

\noindent The study of regularity and definability properties on Polish spaces plays a central role in classical Descriptive Set Theory. For completeness, we mention some examples of Polish spaces which will appear in further sections : \\ \\ $\bullet$ A topological group\footnote{We assume that topological groups are Hausdorff.} is called a Polish group if it is Polish as a topological space. Any locally compact group which is second countable is Polish. In particular, every connected (hence second countable) locally compact Lie group and every connected abelian Lie group (being isomorphic to $\mathbb{R}^{n}\times\mathbb{T}^{m}$ for some positive integers $n,m$) is Polish. \\ \\ $\bullet$ If $X$ is a topological space, let $\mathcal{K}(X)$ be the collection of its compact subsets. The Vietoris topology on $\mathcal{K}(X)$ is the topology generated by subsets of the following form : $$\lbrace K\in\mathcal{K}(X):K\subseteq\mathcal{U}\rbrace \ \ \text{and} \ \ \lbrace K\in\mathcal{K}(X):K\cap\mathcal{U}\neq\emptyset\rbrace$$ for an open set $\mathcal{U}\subseteq X$. The following is a well known result : \begin{thm} Let $X$ be a Polish space. Then, $\mathcal{K}(X)$ endowed with the Vietoris topology is a Polish space.\end{thm}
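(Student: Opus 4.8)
The plan is to produce a complete, separable metric on $\mathcal{K}(X)$ whose induced topology is the Vietoris topology; the natural candidate is the Hausdorff metric. First I would fix a complete compatible metric $d$ on $X$ with $d\le 1$ (always possible, e.g.\ by replacing $d$ with $\min(d,1)$), and define on $\mathcal{K}(X)$
$$d_H(K,L)=\max\Big\{\sup_{x\in K}\operatorname{dist}(x,L),\ \sup_{y\in L}\operatorname{dist}(y,K)\Big\},$$
with the convention that $\emptyset$ is an isolated point, i.e.\ $d_H(\emptyset,\emptyset)=0$ and $d_H(\emptyset,K)=1$ for $K\neq\emptyset$; this is consistent because $\{\emptyset\}=\{K:K\subseteq\emptyset\}$ is already Vietoris-open, and the triangle inequality for $d_H$ is a short direct check. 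The next step is to verify that the $d_H$-topology coincides with the Vietoris topology. For the subbasic Vietoris-open sets: if $K\subseteq\mathcal U$ with $\mathcal U$ open then $\operatorname{dist}(K,X\setminus\mathcal U)>0$ by compactness, so a small $d_H$-ball around $K$ stays inside $\{L:L\subseteq\mathcal U\}$; and if $x\in K\cap\mathcal U$, a ball $B(x,\varepsilon)\subseteq\mathcal U$ forces every $L$ with $d_H(K,L)<\varepsilon$ to meet $\mathcal U$. Conversely, covering $K$ by finitely many balls of radius $\varepsilon/3$ centred at $x_1,\dots,x_k\in K$, the Vietoris-open set $\{L:L\subseteq\bigcup_i B(x_i,\varepsilon/3)\text{ and }L\cap B(x_i,\varepsilon/3)\neq\emptyset\text{ for all }i\}$ lies inside the $d_H$-ball of radius $\varepsilon$ about $K$. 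Separability is then immediate: if $D\subseteq X$ is countable dense, the finite subsets of $D$ form a countable $d_H$-dense subset of $\mathcal{K}(X)$.

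The substantive step is completeness of $(\mathcal{K}(X),d_H)$. Given a $d_H$-Cauchy sequence $(K_n)$ — which, after discarding the trivial eventually-empty case, we may take to consist of nonempty compact sets — I would propose the limit
$$K=\bigcap_{N\ge 1}\overline{\bigcup_{n\ge N}K_n}$$
and then check in turn that: (i) $K$ is closed, being an intersection of closed sets; (ii) $K$ is totally bounded — given $\varepsilon>0$ choose $N$ with $d_H(K_n,K_m)<\varepsilon/3$ for $n,m\ge N$, take a finite $\varepsilon/3$-net of the compact set $K_N$, and observe it is an $\varepsilon$-net for $\overline{\bigcup_{n\ge N}K_n}\supseteq K$ — so, $X$ being complete, $K$ is compact; (iii) $K$ is nonempty — pass to a subsequence along which $d_H(K_{n_k},K_{n_{k+1}})<2^{-k}$, choose $x_1\in K_{n_1}$ and inductively $x_{k+1}\in K_{n_{k+1}}$ with $d(x_k,x_{k+1})<2^{-k}$; the sequence $(x_k)$ is Cauchy, and its limit lies in $K$; (iv) $d_H(K_n,K)\to 0$ — for large $n$ the same geometric-chaining construction moves any point of $K_n$ to within $O(\varepsilon)$ of a point of $K$, while the defining formula for $K$ moves any point of $K$ to within $O(\varepsilon)$ of $K_n$.

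I expect the main obstacle to be item (iv), proving that a $d_H$-Cauchy sequence genuinely converges to $K$: it requires the subsequence-plus-geometric-series argument to be run with enough uniformity to control $\sup_{x\in K_n}\operatorname{dist}(x,K)$, and to be combined with the $\limsup$-type description of $K$ to control $\sup_{y\in K}\operatorname{dist}(y,K_n)$; by contrast, the identification of the Vietoris topology with the Hausdorff-metric topology and the separability statement are routine. Once (i)--(iv) are in hand, $(\mathcal{K}(X),d_H)$ is a separable metric space, completely metrized by $d_H$, whose topology is the Vietoris topology — which is exactly the assertion that $\mathcal{K}(X)$ with the Vietoris topology is Polish.
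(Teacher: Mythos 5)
Your proposal is correct: the paper does not prove this statement itself but defers to Kechris (Theorem 4.25), and your argument --- metrizing the Vietoris topology by the Hausdorff metric (with $\emptyset$ isolated), taking finite subsets of a countable dense set for separability, and proving completeness via $K=\bigcap_{N}\overline{\bigcup_{n\ge N}K_n}$ together with the chaining argument --- is exactly the standard proof found in that reference. The only step you leave as a sketch is (iv), but the subsequence-plus-geometric-series chaining you describe, combined with the $\limsup$-type description of $K$, is precisely how one bounds $\sup_{x\in K_n}\operatorname{dist}(x,K)$ and $\sup_{y\in K}\operatorname{dist}(y,K_n)$, so nothing essential is missing.
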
\begin{proof} The reader can find a proof in \cite{9} (Theorem 4.25).\end{proof} \noindent $\bullet$ If $X$ is a topological space, let $\mathcal{F}(X)$ be the collection of its closed subsets. The collection $\mathcal{F}(X)$ endowed with the $\sigma$-algebra generated by sets of the following form : $$\lbrace F\in\mathcal{F}(X):F\cap\mathcal{U}\neq\emptyset\rbrace,  \ \text{for open sets $\mathcal{U}\subseteq X$}$$ is called the Effros space on $\mathcal{F}(X)$, which we will denote by $\mathcal{E}(X)$. The relevance of this example emerges from the next two well known results : \begin{thm} Let $X$ be a Polish space. Then, $\mathcal{F}(X)$ is a standard Borel space.\end{thm}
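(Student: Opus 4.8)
The plan is to realise $\mathcal F(X)$, with its Effros Borel structure $\mathcal E(X)$, as a Borel subset of a space of the form $\mathcal K(Y)$ with $Y$ compact metrizable; since $\mathcal K(Y)$ is Polish — hence standard Borel — by the Vietoris theorem stated above, and since a Borel subset of a standard Borel space is again standard Borel, this will suffice. Concretely, as $X$ is separable metrizable it embeds homeomorphically into the Hilbert cube $[0,1]^{\mathbb N}$; let $Y$ be the closure of its image, a compact metrizable space in which $X$ is \emph{dense}, and note that, being Polish, $X$ is a $G_\delta$ subset of $Y$ by Alexandrov's theorem. I would then study the closure map $\phi\colon\mathcal F(X)\to\mathcal K(Y)$, $\phi(F)=\overline F^{\,Y}$. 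Since $\overline F^{\,Y}\cap X=F$ for every closed $F\subseteq X$, the map $\phi$ is injective, and its range is precisely $R:=\{L\in\mathcal K(Y): L=\overline{L\cap X}^{\,Y}\}$, the set of compact $L\subseteq Y$ for which $L\cap X$ is dense in $L$.

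The second step is to verify that $\phi$ is a Borel isomorphism of $\mathcal E(X)$ onto $R$. It is enough to handle the ``hitting sets'' $\{L:L\cap W\neq\emptyset\}$, $W\subseteq Y$ open, as these generate the Borel $\sigma$-algebra of $\mathcal K(Y)$: the remaining Vietoris subbasic sets $\{L:L\subseteq U\}$ lie in the $\sigma$-algebra they generate, since for $C\subseteq Y$ closed $\{L:L\cap C\neq\emptyset\}=\bigcap_k\{L:L\cap C_{1/k}\neq\emptyset\}$ by compactness of $L$, where $C_{1/k}$ is the open $1/k$-neighbourhood of $C$. Now $\phi^{-1}(\{L:L\cap W\neq\emptyset\})=\{F:F\cap(W\cap X)\neq\emptyset\}$, because an open set meets a closed set iff it meets its closure, and this is an $\mathcal E(X)$-generator; conversely, every $\mathcal E(X)$-generator has the form $\{F:F\cap(W\cap X)\neq\emptyset\}$, and $\phi(\{F:F\cap(W\cap X)\neq\emptyset\})=R\cap\{L:L\cap W\neq\emptyset\}$, because for $L\in R$ the dense set $L\cap X$ meets $W$ as soon as $L$ does. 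Hence $\phi$ and $\phi^{-1}$ are both Borel.

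The hard part will be proving that the range $R$ is Borel in $\mathcal K(Y)$. The naive expansion $R=\bigcap_n\big(\{L:L\cap U_n=\emptyset\}\cup\{L:L\cap(U_n\cap X)\neq\emptyset\}\big)$ over a countable basis $(U_n)$ of $Y$ is of no help, since the hitting set of a $G_\delta$ set such as $U_n\cap X$ need not be Borel in $\mathcal K(Y)$. Instead I would exploit density of $X$: writing $Y\setminus X=\bigcup_m C_m$ with each $C_m$ closed, density of $X$ forces every $C_m$ to be nowhere dense in $Y$. A Baire-category argument inside the compact metric space $L$, applied to the $F_\sigma$ set $L\cap\bigcup_m C_m$, then shows that $L\cap X$ is dense in $L$ if and only if each $L\cap C_m$ has empty interior in $L$; unwinding the latter over the basis $(U_k)$ yields
$$R=\bigcap_m\bigcap_k\Big(\{L:L\cap U_k=\emptyset\}\cup\{L:L\cap(U_k\setminus C_m)\neq\emptyset\}\Big),$$
where every set on the right is now Borel (a Boolean combination of Vietoris-open sets), because each $U_k\setminus C_m$ is open; hence $R$ is Borel. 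Together with the previous step, $\mathcal E(X)$ is Borel-isomorphic to the Borel subset $R$ of the standard Borel space $\mathcal K(Y)$, and therefore $\mathcal F(X)$ is standard Borel.
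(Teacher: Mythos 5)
Your argument is correct, and it is essentially the standard proof that the paper points to (Kechris, Theorem 12.6): embed $X$ densely in a metrizable compactification $Y$, send $F$ to $\overline{F}^{\,Y}$, identify the image with the set of $L\in\mathcal K(Y)$ in which $L\cap X$ is dense, and use that $X$ is $G_\delta$ in $Y$ together with a Baire-category argument in $L$ to see that this set is Borel. The only point worth making explicit is that passing from the subbasic hitting sets to the full Borel $\sigma$-algebra of $\mathcal K(Y)$ uses second countability of the Vietoris topology, which is immediate here.
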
\begin{proof} The reader can find a proof in \cite{9} (Theorem 12.6).\end{proof} \begin{thm} Let $X$ be a compact Polish space. Then, $\mathcal{E}(X)$ on $\mathcal{F}(X)=\mathcal{K}(X)$ coincides with the Borel algebra generated by the Vietoris topology.\end{thm}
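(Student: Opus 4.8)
\noindent The plan is to prove the two inclusions of $\sigma$-algebras separately. The easy one is that $\mathcal{E}(X)$ is contained in the Borel algebra generated by the Vietoris topology: by definition each generator $\{F\in\mathcal{F}(X):F\cap\mathcal{U}\neq\emptyset\}$ of $\mathcal{E}(X)$ is a subbasic Vietoris-open set, hence Vietoris-Borel, and since the Vietoris Borel algebra is a $\sigma$-algebra containing all of these generators it must contain $\mathcal{E}(X)$.

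The substantive direction is the reverse inclusion, and here the main point is to show that the \emph{other} family of Vietoris subbasic sets, namely $\{K\in\mathcal{K}(X):K\subseteq\mathcal{U}\}$ for open $\mathcal{U}\subseteq X$, already lies in $\mathcal{E}(X)$. Fix a compatible metric $d$ on $X$ and set $F:=X\setminus\mathcal{U}$, a closed set. For $K\in\mathcal{K}(X)$ one has $K\subseteq\mathcal{U}$ if and only if $K\cap F=\emptyset$, and because $K$ is compact and $F$ is closed this is in turn equivalent to $d(K,F)>0$ (the continuous function $x\mapsto d(x,F)$ is strictly positive on the compact set $K$, hence bounded below there by a positive constant), i.e. to the existence of some $q\in\mathbb{Q}^{+}$ with $K\cap V_{q}=\emptyset$, where $V_{q}:=\{x\in X:d(x,F)<q\}$ is open. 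Consequently
$$\{K\in\mathcal{K}(X):K\subseteq\mathcal{U}\}=\bigcup_{q\in\mathbb{Q}^{+}}\Big(\mathcal{F}(X)\setminus\{F'\in\mathcal{F}(X):F'\cap V_{q}\neq\emptyset\}\Big),$$
which exhibits this set as a countable union of complements of generators of $\mathcal{E}(X)$, hence as an element of $\mathcal{E}(X)$.

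To conclude, I would use that the Vietoris topology on $\mathcal{K}(X)$ is second countable (it is Polish by the theorem quoted above). Fixing a countable basis $\{U_{n}\}_{n}$ of $X$, the sets
$$\langle U_{n_{1}},\dots,U_{n_{k}}\rangle:=\{K:K\subseteq U_{n_{1}}\cup\cdots\cup U_{n_{k}}\}\cap\bigcap_{j=1}^{k}\{K:K\cap U_{n_{j}}\neq\emptyset\}$$
form a countable basis for the Vietoris topology, and each such set belongs to $\mathcal{E}(X)$: the first factor is of the type treated in the previous paragraph (with $\mathcal{U}=U_{n_{1}}\cup\cdots\cup U_{n_{k}}$), and the remaining factors are exactly generators of $\mathcal{E}(X)$. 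Therefore every Vietoris-open set, being a countable union of such basic sets, lies in $\mathcal{E}(X)$, and since $\mathcal{E}(X)$ is a $\sigma$-algebra the whole Vietoris Borel algebra is contained in $\mathcal{E}(X)$; combined with the first paragraph this yields the asserted equality. The only genuinely non-formal step is the displayed identity, which is where the hypotheses are really used (compactness of $X$ forces $\mathcal{F}(X)=\mathcal{K}(X)$ so that the comparison is meaningful, and compactness of $K$ turns disjointness into positive distance); the rest is routine bookkeeping with bases and $\sigma$-algebras.
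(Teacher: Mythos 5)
Your proof is correct, and since the paper does not argue this theorem itself (it simply cites Kechris, \emph{Classical Descriptive Set Theory}, Example 12.7), what you have written is precisely the standard argument behind that citation: the nontrivial inclusion is reduced to showing $\{K:K\subseteq\mathcal{U}\}\in\mathcal{E}(X)$ via the distance-to-the-complement trick, and then one invokes the countable Vietoris basis $\langle U_{n_1},\dots,U_{n_k}\rangle$. The only cosmetic point is the degenerate case $\mathcal{U}=X$ (so $F=\emptyset$), where $d(x,F)$ needs the usual convention $d(x,\emptyset)=\infty$ or a one-line separate remark that the set in question is all of $\mathcal{K}(X)$; this does not affect the validity of the argument.
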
\begin{proof} The reader can find a proof in \cite{9} (Example 12.7).\end{proof} \noindent If furthermore $X$ is locally compact, $\mathcal{E}(X)$ is induced by the Fell topology (which is a compact metrizable topology that coincides with the Vietoris topology whenever $X$ is compact) on $\mathcal{F}(X)$ which has as a basis sets of the form : $$\lbrace F\in\mathcal{F}(X): F\cap K=\emptyset, F\cap\mathcal{U}_{1}\neq\emptyset,...,F\cap\mathcal{U}_{n}\neq\emptyset\rbrace$$ with $K$ varying over $\mathcal{K}(X)$ and each $\mathcal{U}_{i}\subseteq X$ an open set.\\ \\
\noindent For a Polish space $X$, its Borel algebra $\mathcal{B}(X)$ is stratified in the so called Borel hierarchy. Given a family of subsets $\mathcal{F}\subseteq\mathcal{P}(X)$ we adopt the usual notations : $$\mathcal{F}_{\sigma}=\lbrace \bigcup_{n\in\omega}A_{n}:A_{n}\in\mathcal{F}\rbrace$$ $$\neg\mathcal{F}=\lbrace \neg A : A\in\mathcal{F}\rbrace$$ We define the first level of the Borel hierarchy as follows : $$\Sigma_{1}^{0}(X)=\lbrace\mathcal{U}\subseteq X : \mathcal{U}\ \text{is open}\rbrace \ \text{and} \ \Pi_{1}^{0}(X)=\neg\Sigma_{1}^{0}(X)$$ We proceed by transfinite recursion, for $1<\alpha<\omega_{1}$ as follows : $$\Sigma_{\alpha}^{0}(X)=(\bigcup_{\beta<\alpha}\Pi_{\beta}^{0}(X))_{\sigma} \ \text{and} \ \Pi_{\alpha}^{0}(X)=\neg\Sigma_{\alpha}^{0}(X)$$ It is straightforward to verify that if $\beta<\alpha$, then $\Sigma_{\beta}^{0}(X)\subseteq\Sigma_{\alpha}^{0}(X)$ and that any Borel set of $X$ lies in some level of the Borel hierarchy : $$\mathcal{B}(X)=\bigcup_{\alpha <\omega_{1}}\Sigma_{\alpha}^{0}(X)=\bigcup_{\alpha <\omega_{1}}\Pi_{\alpha}^{0}(X)$$ The class of analytic sets of $X$, denoted by $\Sigma_{1}^{1}(X)$, is defined as follows : $$\Sigma_{1}^{1}(X)=\lbrace A\subset X : A=\pi(B), B\in\mathcal{B}(X\times Y) \ \text{for some Polish space $Y$}\rbrace$$ where $\pi$ denotes the projection map. The class of coanalytic sets of $X$, denoted by $\Pi_{1}^{1}(X)$, is defined as $$\Pi_{1}^{1}(X)=\neg\Sigma_{1}^{1}(X)$$ An equivalent and often convenient definition of an analytic set is as follows : \\ \\ Let $X$ be Polish and $A\subseteq X$. Then, $A$ is analytic if and only if there is a Polish space $P$ and a continuous map $f:P\rightarrow X$ such that $f(P)=A$. \\ \\ It turns out that the class of Borel sets of a Polish space coincides with the class of those sets which are simultaneously analytic and coanalytic : \begin{thm} Let $X$ be a Polish space. Then, $\mathcal{B}(X)=\Sigma_{1}^{1}(X)\cap\Pi_{1}^{1}(X)$.\end{thm}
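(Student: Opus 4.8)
The plan is to establish the two inclusions separately. The inclusion $\mathcal{B}(X)\subseteq\Sigma_{1}^{1}(X)\cap\Pi_{1}^{1}(X)$ is routine, and I would argue it as follows. Every closed $F\subseteq X$ is itself a Polish space, so the identity map exhibits $F$ as analytic; hence every closed set belongs to $\Sigma_{1}^{1}(X)$, and dually every open set belongs to $\Pi_{1}^{1}(X)$, while trivially every open set is in $\Sigma_{1}^{1}(X)$ and every closed set in $\Pi_{1}^{1}(X)$. Next, $\Sigma_{1}^{1}(X)$ is closed under countable unions and countable intersections: if $A_{n}=f_{n}(P_{n})$ with $P_{n}$ Polish and $f_{n}$ continuous, then $\bigcup_{n}A_{n}$ is the continuous image of the topological sum $\bigsqcup_{n}P_{n}$, and $\bigcap_{n}A_{n}$ is the image, under $(p_{k})_{k}\mapsto f_{0}(p_{0})$, of the set $\{(p_{k})_{k}\in\prod_{k}P_{k}:f_{n}(p_{n})=f_{m}(p_{m})\text{ for all }n,m\}$, which is closed in the Polish space $\prod_{k}P_{k}$ because $X$ is Hausdorff. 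Dualising, $\Pi_{1}^{1}(X)$ is closed under countable unions and intersections as well. Therefore $\Sigma_{1}^{1}(X)\cap\Pi_{1}^{1}(X)$ contains every open set, is visibly closed under complementation, and is closed under countable unions, hence is a $\sigma$-algebra containing $\Sigma_{1}^{0}(X)$; consequently it contains $\mathcal{B}(X)$.

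For the reverse inclusion the decisive tool is the \emph{Luzin separation theorem}: whenever $A,B\subseteq X$ are disjoint analytic sets, there is a Borel set $C$ with $A\subseteq C$ and $C\cap B=\emptyset$. Granting this for the moment, if $A\in\Sigma_{1}^{1}(X)\cap\Pi_{1}^{1}(X)$ then $A$ and $X\setminus A$ are disjoint analytic sets, so some Borel $C$ satisfies $A\subseteq C$ and $C\cap(X\setminus A)=\emptyset$; this forces $C=A$, whence $A\in\mathcal{B}(X)$. So the whole theorem reduces to the separation theorem, and proving the latter is the main obstacle.

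To prove the separation theorem I would argue by contradiction, calling a pair of sets \emph{Borel-inseparable} if no Borel set separates them in the above sense. The combinatorial heart of the matter is the elementary remark that if $A=\bigcup_{n}A_{n}$, $B=\bigcup_{m}B_{m}$ and each $(A_{n},B_{m})$ is Borel-separable, say by $C_{n,m}$, then $\bigcup_{n}\bigcap_{m}C_{n,m}$ shows that $(A,B)$ is Borel-separable; equivalently, if $(A,B)$ is Borel-inseparable then $(A_{n},B_{m})$ is Borel-inseparable for some $n,m$. Now assume $A,B$ are disjoint, analytic and Borel-inseparable, and write $A=f(\mathcal{N})$, $B=g(\mathcal{N})$ for continuous $f,g\colon\mathcal{N}\to X$, where $\mathcal{N}=\omega^{\omega}$ is Baire space; this is legitimate by the characterisation of analytic sets recalled above together with the standard fact (see \cite{9}) that every nonempty Polish space is a continuous image of $\mathcal{N}$. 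For a finite sequence $s$ let $\mathcal{N}_{s}$ denote the corresponding basic clopen set; then $f(\mathcal{N}_{s})=\bigcup_{n}f(\mathcal{N}_{s\myfrown n})$ and likewise for $g$, so by the remark, Borel-inseparability of $(f(\mathcal{N}_{s}),g(\mathcal{N}_{t}))$ yields Borel-inseparability of $(f(\mathcal{N}_{s\myfrown n}),g(\mathcal{N}_{t\myfrown m}))$ for suitable $n,m$.

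Iterating this one step at a time, and starting from the Borel-inseparable pair $(f(\mathcal{N}_{\emptyset}),g(\mathcal{N}_{\emptyset}))=(A,B)$, I obtain $\alpha,\beta\in\mathcal{N}$ such that $(f(\mathcal{N}_{\alpha\restriction k}),g(\mathcal{N}_{\beta\restriction k}))$ is Borel-inseparable for every $k$. Put $x=f(\alpha)\in A$ and $y=g(\beta)\in B$. Since $A\cap B=\emptyset$ we have $x\neq y$, and as $X$ is metrizable there are disjoint open sets $U\ni x$ and $V\ni y$. By continuity of $f$ at $\alpha$ and of $g$ at $\beta$, for some $k$ we have $f(\mathcal{N}_{\alpha\restriction k})\subseteq U$ and $g(\mathcal{N}_{\beta\restriction k})\subseteq V$, so the Borel set $U$ separates $f(\mathcal{N}_{\alpha\restriction k})$ from $g(\mathcal{N}_{\beta\restriction k})$, contradicting their Borel-inseparability. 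This proves the separation theorem, and with it the theorem.
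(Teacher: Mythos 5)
Your proof is correct: both inclusions are argued soundly, and your derivation of the Luzin separation theorem (the inseparability propagation through the trees of basic clopen sets of $\mathcal{N}$, plus continuity at the resulting branches) is the standard complete argument for Suslin's theorem. The paper itself gives no proof but simply cites Kechris (Theorem 14.11), where exactly this separation-theorem route is taken, so your approach is essentially the same as the one the paper relies on.
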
\begin{proof} The reader can find a proof in \cite{9} (Theorem 14.11).\end{proof} \noindent Furthermore, if $X$ is uncountable then $\Sigma_{1}^{1}(X)$ and $\Pi_{1}^{1}(X)$ extend $\mathcal{B}(X)$. This was famously proved by Suslin after detecting a mistake in Lebesgue's work.\footnote{Lebesgue claimed in \cite{15} that the projection of a Borel set of the plane onto the real line is again a Borel set.} \begin{thm}\label{su} Let $X$ be an uncountable Polish space. Then, $\mathcal{B}(X)\subsetneq\Pi_{1}^{1}(X)$.\end{thm}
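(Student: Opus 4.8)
The plan is to reduce the statement to the single space $X=2^{\omega}$ and then to produce, by a diagonal argument against a universal analytic set, a concrete member of $\Pi_{1}^{1}(2^{\omega})\setminus\mathcal{B}(2^{\omega})$.

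\emph{Reduction to $2^{\omega}$.} Every uncountable Polish space $X$ contains a homeomorphic copy $C$ of the Cantor space: by the Cantor--Bendixson theorem $X$ is the union of a nonempty perfect closed set and a countable open set, and every nonempty perfect Polish space contains a copy of $2^{\omega}$ via a Cantor scheme. Since $C\cong 2^{\omega}$ is compact, it is closed in $X$. Suppose $D\in\Pi_{1}^{1}(2^{\omega})\setminus\mathcal{B}(2^{\omega})$ has been constructed and let $D'\subseteq C$ be its image under a homeomorphism $2^{\omega}\to C$. Then $C\setminus D'$ is analytic in $C$, hence (composing a witnessing continuous surjection with the inclusion $C\hookrightarrow X$) analytic in $X$; as $X\setminus C$ is open and hence analytic, $X\setminus D'=(X\setminus C)\cup(C\setminus D')$ is analytic in $X$, so $D'\in\Pi_{1}^{1}(X)$. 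If $D'$ were Borel in $X$, then since $C$ is Borel in $X$ the set $D'=D'\cap C$ would be Borel in the subspace $C\cong 2^{\omega}$, contradicting the choice of $D$. Hence $D'$ witnesses $\mathcal{B}(X)\subsetneq\Pi_{1}^{1}(X)$, and it remains to build $D$.

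\emph{A universal analytic set.} Fix an enumeration $(V_{n})_{n\in\omega}$ of a basis of $2^{\omega}\times\omega^{\omega}$ and put
$$F=\{(x,p)\in 2^{\omega}\times(2^{\omega}\times\omega^{\omega}) : \forall n\,(x(n)=1\Rightarrow p\notin V_{n})\}.$$
One checks $F=\bigcap_{n}(\{x:x(n)=0\}\cup\{p:p\notin V_{n}\})$ is closed, and that for every closed $E\subseteq 2^{\omega}\times\omega^{\omega}$, taking $x$ with $x(n)=1\iff V_{n}\cap E=\emptyset$ gives $F_{x}=E$. Let $U=\{(x,y)\in 2^{\omega}\times 2^{\omega}:\exists z\in\omega^{\omega}\,(x,(y,z))\in F\}$, which is analytic as a projection of the closed set $F$. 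Recall that a set $A\subseteq 2^{\omega}$ is analytic iff $A=\operatorname{proj}_{2^{\omega}}(E)$ for some closed $E\subseteq 2^{\omega}\times\omega^{\omega}$: if $A=f(\omega^{\omega})$ with $f$ continuous, then the graph of $f$ is closed in $2^{\omega}\times\omega^{\omega}$ and projects onto $A$. Choosing $x$ with $F_{x}=E$ for this $E$ yields $A=\operatorname{proj}_{2^{\omega}}(F_{x})=U_{x}$; thus $\{U_{x}:x\in 2^{\omega}\}=\Sigma_{1}^{1}(2^{\omega})$.

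\emph{Diagonalisation.} Set $D=\{x\in 2^{\omega}:(x,x)\notin U\}$. Since $x\mapsto(x,x)$ is continuous, $\{x:(x,x)\in U\}$ is analytic, so $D\in\Pi_{1}^{1}(2^{\omega})$. If $D$ were analytic, universality would give $x_{0}$ with $D=U_{x_{0}}$, and then $x_{0}\in D\iff(x_{0},x_{0})\notin U\iff x_{0}\notin U_{x_{0}}\iff x_{0}\notin D$, which is absurd; hence $D\notin\Sigma_{1}^{1}(2^{\omega})$. As $\mathcal{B}(2^{\omega})=\Sigma_{1}^{1}(2^{\omega})\cap\Pi_{1}^{1}(2^{\omega})$, the set $D$ is not Borel, as required. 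The one step demanding care is the universality of $U$ --- verifying that $F$ is universal for closed subsets of $2^{\omega}\times\omega^{\omega}$ and that every analytic subset of $2^{\omega}$ arises as the projection of such a closed set --- after which the reduction and the diagonal argument are essentially formal.
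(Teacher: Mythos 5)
Your proof is correct. The paper does not argue this theorem itself --- it simply cites Kechris, \emph{Classical Descriptive Set Theory}, Theorem 14.12 --- and your argument is precisely the standard proof given there: a $2^{\omega}$-universal analytic set obtained by projecting a universal closed subset of $2^{\omega}\times\omega^{\omega}$, a diagonalisation producing a coanalytic non-analytic set, Suslin's theorem ($\mathcal{B}=\Sigma_{1}^{1}\cap\Pi_{1}^{1}$, Theorem 2.10 of the paper) to conclude non-Borelness, and the Cantor--Bendixson embedding of $2^{\omega}$ to transfer the example to an arbitrary uncountable Polish space. The only steps you invoke without proof (every nonempty analytic set is a continuous image of $\omega^{\omega}$, closedness of graphs of continuous maps into Hausdorff spaces, stability of analytic sets under finite unions and continuous preimages) are standard and harmless.
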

\begin{proof} The reader can find a proof in \cite{9} (Theorem 14.12).\end{proof}

\noindent One can argue through different lenses that sets which are (co)analytic and not Borel are of \emph{high complexity}, thus their detection being a topic of interest. A commonly used technique to identify these sets involves the concept of $\Gamma$-hardness. In what follows, we consider $\Gamma(X)$ to be either $\Sigma_{1}^{1}(X)$ or $\Pi_{1}^{1}(X)$. \begin{defn} Let $X$ be a Polish space and $A\subseteq X$. Then, $A$ is said to be $\Gamma(X)$-hard if for every zero-dimensional Polish space $Y$ and $B\in\Gamma(Y)$, there is a continuous map $f:Y\rightarrow X$ such that $B=f^{-1}(A)$. If, furthermore $A\in\Gamma(X)$, then $A$ is said to be $\Gamma(X)$-complete.\end{defn}

\noindent We note that if $A\subseteq X$ is a $\Gamma(X)$-complete set, then $A\in\Gamma(X)\setminus\mathcal{B}(X)$. Indeed, by Theorem \ref{su} there exists a set $B\in\Gamma(\omega^{\omega})\setminus\mathcal{B}(\omega^{\omega})$. If $A$ is $\Gamma(X)$-hard, then there is a continuous map $f:\omega^{\omega}\rightarrow X$ such that $B=f^{-1}(A)$. Since continuous preimages preserve the property of being Borel, we conclude that $A\notin\mathcal{B}(X)$. \\ \\ 
\noindent We  finish the section with the following example of a $\Pi_{1}^{1}(\mathcal{K}([0,1]))$-complete set, which shall be used further :

\begin{thm}\label{redu} Let $\mathbb{Q}'=\mathbb{Q}\cap [0,1]$. Then, the following set is $\Pi_{1}^{1}(\mathcal{K}([0,1]))$-complete (wrt Vietoris topology) : $$K(\mathbb{Q}')=\lbrace K\in\mathcal{K}([0,1]):K\subseteq\mathbb{Q}'\rbrace$$\end{thm}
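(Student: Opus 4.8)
The plan is to prove the two halves of the assertion separately: that $K(\mathbb{Q}')\in\Pi_{1}^{1}(\mathcal{K}([0,1]))$, and that $K(\mathbb{Q}')$ is $\Pi_{1}^{1}$-hard. For the latter I would produce a continuous reduction to $K(\mathbb{Q}')$ from the set $WF$ of well-founded trees on $\omega$, viewed as a subset of the Polish space $\mathrm{Tr}$ of all trees on $\omega$ (a closed, hence zero-dimensional, subspace of $2^{\omega^{<\omega}}$); that $WF$ is $\Pi_{1}^{1}$-complete is classical (see \cite{9}), and since continuous reducibility is transitive it then suffices to exhibit a continuous $f:\mathrm{Tr}\to\mathcal{K}([0,1])$ with $T\in WF\iff f(T)\in K(\mathbb{Q}')$ in order to conclude $\Pi_{1}^{1}$-hardness of $K(\mathbb{Q}')$.

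For the upper bound, observe that $K\not\subseteq\mathbb{Q}'$ if and only if $\exists x\in[0,1]\,(x\in K\wedge x\notin\mathbb{Q}')$. The relation $\{(K,x)\in\mathcal{K}([0,1])\times[0,1]:x\in K\}$ is closed for the Vietoris (equivalently, Hausdorff-metric) topology, and $\mathbb{Q}'$, being countable, is $F_{\sigma}$; hence $\{(K,x):x\in K\wedge x\notin\mathbb{Q}'\}$ is Borel and its projection onto $\mathcal{K}([0,1])$, namely $\mathcal{K}([0,1])\setminus K(\mathbb{Q}')$, is analytic. Thus $K(\mathbb{Q}')\in\Pi_{1}^{1}(\mathcal{K}([0,1]))$.

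The core of the hardness argument is a Cantor-type scheme of intervals. By recursion on $|s|$, for $s\in\omega^{<\omega}$, I would choose nonempty closed intervals $I_{s}\subseteq[0,1]$ with rational endpoints together with a distinguished rational point $a_{s}\in\operatorname{int}(I_{s})$, subject to: (i) the children $I_{s^\frown n}$ ($n\in\omega$) are pairwise disjoint closed subintervals of $\operatorname{int}(I_{s})$; (ii) $\operatorname{diam}(I_{s^\frown n})\le\tfrac12\operatorname{diam}(I_{s})$, so diameters tend to $0$ along every infinite branch; (iii) $\sup\{|x-a_{s}|:x\in I_{s^\frown n}\}\to 0$ as $n\to\infty$, i.e. the children cluster to $a_{s}$; and (iv) writing $q_{0},q_{1},\dots$ for an enumeration of $\mathbb{Q}'$, one has $q_{m}\notin I_{s}$ whenever $|s|>m$. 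Clauses (i)--(iv) are easily maintained step by step: at the node $s$ one fits the children into a short interval $(a_{s},a_{s}+\delta)\subseteq\operatorname{int}(I_{s})$ with $\delta$ chosen so small that $(a_{s},a_{s}+\delta)$ avoids the finitely many points $q_{0},\dots,q_{|s|}$. Finally set $f(T)=\overline{\{a_{s}:s\in T\}}$, a compact subset of $[0,1]$.

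Three verifications then complete the proof. \emph{Continuity of $f$}: since $\operatorname{diam}(I_{s})\to 0$ uniformly as $|s|\to\infty$, the set $f(T)$ lies within Hausdorff distance tending to $0$ with $N$ of the finite set $\{a_{s}:s\in T,\ |s|\le N\}$, so $f(T)$ depends, modulo that error, only on $T\cap\omega^{\le N}$, which is exactly continuity into $\mathcal{K}([0,1])$. \emph{The ill-founded case}: if $\alpha\in[T]$, the nested intervals $I_{\alpha\restriction k}$ shrink to a single point $y_{\alpha}=\lim_{k}a_{\alpha\restriction k}\in f(T)$, and (iv) forces $y_{\alpha}\neq q_{m}$ for every $m$, so $y_{\alpha}$ is irrational and $f(T)\notin K(\mathbb{Q}')$. \emph{The well-founded case}: a limit point $y$ of $\{a_{s}:s\in T\}$ equals $\lim_{j}a_{s_{j}}$ for distinct $s_{j}\in T$; distinctness forces $|s_{j}|\to\infty$, and as $T$ has no infinite branch there is a least level $\ell_{0}$ at which $(s_{j}\restriction\ell_{0})_{j}$ fails to be eventually constant, whence the $s_{j}$ eventually extend $t^\frown n_{j}$ for a fixed $t\in T$ with $n_{j}\to\infty$ along a subsequence; then (ii)--(iii) give $a_{s_{j}}\to a_{t}\in\mathbb{Q}'$, so $y=a_{t}$ and $f(T)\subseteq\mathbb{Q}'$. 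Hence $T\in WF\iff f(T)\in K(\mathbb{Q}')$, as required. I expect the one genuinely delicate point to be reconciling clause (iv) — which is what pushes $f(T)$ out of $K(\mathbb{Q}')$ when $T$ is ill-founded — with keeping every $a_{s}$ rational and $f$ continuous; the level-by-level exclusion of $q_{0},\dots,q_{|s|}$ is precisely what makes this fit together. (Alternatively, one may simply invoke the corresponding statement in \cite{9}.)
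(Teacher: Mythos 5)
The paper offers no argument of its own for this statement: it simply cites \cite{3} (Theorem IV.2.1). Your proposal is therefore a genuinely different route only in the sense that it is self-contained; what you write is the classical Hurewicz-style proof (reduce the $\Pi_{1}^{1}$-complete set $WF$ of well-founded trees via a Cantor scheme of intervals carrying rational base points), which is essentially the proof found in \cite{9}, and your closing parenthetical --- ``invoke the corresponding statement in \cite{9}'' --- is exactly what the paper does. Your upper-bound computation (closedness of the membership relation, $\mathbb{Q}'$ being $F_{\sigma}$, projection) is correct, and the exclusion clause (iv) is the right device to make branches of an ill-founded tree produce points outside $\mathbb{Q}'$.

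Two verifications are too loose as written, though both are repairable with ingredients you already built in. First, continuity: $T\cap\omega^{\le N}$ is \emph{not} a finite amount of information about $T$ (there are infinitely many sequences of length $\le N$), so ``$f(T)$ depends, up to $2^{-N}$, only on $T\cap\omega^{\le N}$'' does not by itself give continuity for the product topology on $\mathrm{Tr}$. You must also use clause (iii): for each of the finitely many nodes $s$ you retain up to level $N$, choose $M_{s}$ so large that $I_{s^\frown n}$ lies within $\varepsilon$ of $a_{s}$ for all $n\ge M_{s}$; then agreement of two trees on the resulting \emph{finite} set of nodes forces their images to be $\varepsilon$-close in Hausdorff distance. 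Second, in the well-founded case the claim ``distinctness forces $|s_{j}|\to\infty$'' is false (take the siblings $s_{j}=(j)$), and at the least non-stabilizing level $\ell_{0}$ the entries $s_{j}(\ell_{0}-1)$ need not tend to infinity merely because they are not eventually constant: you must exclude the bounded case, which follows from clause (i), since two values $n\neq n'$ occurring infinitely often would place the limit point in the two disjoint closed intervals $I_{t^\frown n}$ and $I_{t^\frown n'}$. You should also note why $\ell_{0}$ exists at all: if every level stabilized, you would either extract an infinite branch of $T$ or force the $s_{j}$ to be eventually equal, both impossible. (Equivalently, one can argue via the subtree of initial segments occurring infinitely often, which by well-foundedness has a node none of whose children persists, and then apply (iii) directly.) With these repairs your reduction is correct and establishes the theorem.
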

\begin{proof} The reader can find a proof in \cite{3} (Theorem IV.2.1).\end{proof}

\section{Sets of operator multiplicity and sets of multiplicity}

In section 3.1 we introduce two notions of sets of operator multiplicity and relate these with sets of multiplicity as introduced in section 2.1. In section 3.2, we prove some preservation properties concerning direct products and inverse images.

\subsection{Definitions}

\noindent The concepts of \emph{operator $M$-sets} and \emph{operator $M_0$-sets} were introduced in \cite{5} and are of special importance as they allow the use of operator theoretic methods in the study of questions emerging from Harmonic Analysis. We refer the reader to \cite{2},\cite{6},\cite{7} and \cite{8} for further details. In this section, we state the relevant definitions and results needed in further sections, mainly following \cite{2}. \\ \\
\noindent A measure space $(X,\mu)$ will be called a standard measure space if $\mu$ is a Radon measure with respect to some completely metrizable separable and locally compact topology on $X$. For standard measure spaces $(X,\mu)$ and $(Y,\nu)$, a subset $R\subseteq X\times Y$ is said to be a rectangle if it is of the form $R=A\times B$ for $A$ and $B$ measurable. Endowing the product $X\times Y$ with the product measure : \begin{enumerate}[(i)]\item{} $E\subseteq X\times Y$ is said to be \emph{marginally null} if $E\subseteq (X_{0}\times Y)\cup (X\times Y_{0})$ for $\mu(X_0)=0$ and $\nu(Y_0)=0$. \item{} $E,F\subseteq X\times Y$ are said to be \emph{marginally equivalent} if $E\Delta F$ is marginally null. We denote this by $E\sim F$. \item{} A subset $E\subseteq X\times Y$ is said to be $\omega$-open if it is marginally equivalent to a countable union of rectangles. The complement of a $\omega$-open set will be called $\omega$-closed.\end{enumerate}
\noindent Given Hilbert spaces $\mathcal{H}_{1}$ and $\mathcal{H}_{2}$, we denote as usual the space of compact operators in $\mathcal{L}(\mathcal{H}_{1},\mathcal{H}_2)$, by $\mathcal{K}(\mathcal{H}_{1},\mathcal{H}_{2})$. Henceforth, $\mathcal{H}_{1}=L^{2}(X,\mu)$ and $\mathcal{H}_{2}=L^{2}(Y,\nu)$. The space $\mathcal{C}_{1}(\mathcal{H}_2,\mathcal{H}_1)$ of nuclear operators is identified with the Banach space dual of $\mathcal{K}(\mathcal{H}_1,\mathcal{H}_2)$ via $\langle T,S\rangle = \text{tr}(TS)$. Moreover, one can identify $\mathcal{C}_{1}(\mathcal{H}_2,\mathcal{H}_1)$ with the space $\Gamma(X,Y)$ of all (marginal equivalence classes of) fuctions $h:X\times Y\rightarrow\mathbb{C}$ which admit a representation : $$h(x,y)=\sum_{i=1}^{\infty}f_{i}(x)g_{i}(y)$$ with $f_i\in\mathcal{H}_1$ and $g_i\in\mathcal{H}_2$ such that $\sum_{i=1}^{\infty}||f_i||_{2}^{2}<\infty$ and $\sum_{i=1}^{\infty}||g_i||_{2}^{2}<\infty$. The duality between $\mathcal{L}(\mathcal{H}_1,\mathcal{H}_2)$ and $\Gamma(X,Y)$ is given by : $$\langle T,f\otimes g\rangle =(Tf,\overline{g}), \ \text{for $T\in\mathcal{L}(\mathcal{H}_1,\mathcal{H}_2)$ and $f\in L^{2}(X,\mu),g\in L^{2}(Y,\nu)$}$$ If $f\in L^{\infty}(X,\mu)$, let $M_{f}\in\mathcal{B}(\mathcal{H}_{1})$ be the operator of multiplication by $f$. The collection $\lbrace M_{f}\rbrace_{f\in L^{\infty}(X,\mu)}$ is a maximal abelian selfadjoint algebra (masa). If $A\subseteq X$ is measurable, we write $P(A)=M_{\chi_{A}}$ where $\chi_{A}$ is the characteristic map of $A$. A subspace $W\subseteq\mathcal{B}(\mathcal{H}_{1},\mathcal{H}_{2})$ is called a masa-bimodule if $M_{\Psi}TM_{\varphi}\in W$ for all $\Psi\in L^{\infty}(X,\mu)$, $T\in W$ and $\varphi\in L^{\infty}(Y,\nu)$. \\ \\ Now let $E\subseteq X\times Y$ be a $\omega$-closed set and $T\in\mathcal{B}(\mathcal{H}_{1},\mathcal{H}_{2})$. We say that $E$ supports $T$ (or that $T$ is supported on $E$) if $P(B)MP(A)=0$ whenever $A\times B\cap E\sim\emptyset$. For a subset $\mathcal{M}\subseteq\mathcal{B}(\mathcal{H}_{1},\mathcal{H}_{2})$, there exists a smallest (up to marginal equivalence) $\omega$-closed set which support every operator $T\in\mathcal{M}$, that we denote by $supp(\mathcal{M})$. On the other hand, it is known that for every $\omega$-closed set $E$ there exists the smallest and the largest weak$^{\ast}$ closed masa-bimodule - respectively $\frak{M}_{min}(E)$ and $\frak{M}_{max}(E)$ - with support $E$. \\ \\ \noindent We are now in position to introduce operator $M$-sets : \begin{defn} Let $(X,\mu)$ and $(Y,\nu)$ be standard measure spaces and $\kappa\subseteq X\times Y$ be a $\omega$-closed set. Then, $\kappa$ is an operator M-set if : $$\mathcal{K}(\mathcal{H}_1,\mathcal{H}_2)\cap\frak{M}_{max}(\kappa)\neq\lbrace 0\rbrace$$ Otherwise, $\kappa$ is said to be an operator U-set.\end{defn}

\noindent In order to define operator $M_0$-sets, we need some additional terminology. We follow \cite{16} and consider $\sigma$ to be a complex measure of finite total variation defined on the product $\sigma$-algebra of $X\times Y$ and let $|\sigma|$ denote the variation of $\sigma$ and $|\sigma|_{X},|\sigma|_{Y}$ be the marginal measures of $|\sigma|$. Such measure $\sigma$ is said to be an Arveson measure if there is a constant $c>0$ such that the following hold : $$|\sigma|_{X}\leq c\mu \ \ \text{and} \ \ |\sigma|_{Y}\leq c\nu$$ The set of all Arveson measures on $X\times Y$ is denoted by $\mathbb{A}(X,Y)$ and for some $\sigma\in\mathbb{A}(X,Y)$, we denote the smallest constant satisfying its defining inequalities by $||\sigma||_{\mathbb{A}}$. For a $\sigma$-closed subset $F\subseteq X\times Y$, we denote by $\mathbb{A}(F)$ the set of all Arveson measures $\sigma$ in $X\times Y$ such that $supp(\sigma)\subseteq F$. \\ \\ \noindent An Arveson measure $\sigma\in\mathbb{A}(Y,X)$ defines an operator $T_\sigma :\mathcal{H}_1\rightarrow\mathcal{H}_2$ which will be called pseudointegral. These operators were introduced in \cite{16}. Indeed, for $\sigma\in\mathbb{A}(Y,X)$ one can consider the sesquilinear form $\phi:\mathcal{H}_{1}\times\mathcal{H}_2\rightarrow\mathbb{C}$ given by : $$\phi(f,g)=\int_{Y\times X}f(x)\overline{g(y)}d\sigma(y,x)$$ By Riesz Representation Theorem, it follows that there is an unique operator $T_{\sigma}:\mathcal{H}_1\rightarrow\mathcal{H}_2$ such that $(T_{\sigma}f,g)=\phi(f,g)$. \\ \\ For a given $\omega$-closed subset $\kappa\subseteq X\times Y$ we let $\hat{\kappa}=\lbrace (y,x):(x,y)\in\kappa\rbrace$. We can now state the following :

\begin{thm} Let $\sigma\in\mathbb{A}(Y,X)$. There exists an unique $T_{\sigma}:\mathcal{H}_{1}\rightarrow\mathcal{H}_2$ such that : $$(T_{\sigma}f,g)=\int_{Y\times X}f(x)\overline{g(y)}d\sigma(y,x), \ \text{for $f\in\mathcal{H}_{1},g\in\mathcal{H}_2$}$$ Moreover, $||T_{\sigma}||\leq ||\sigma||_{\mathbb{A}}$ and for a given $\omega$-closed subset $\kappa\subseteq X\times Y$ the operator $T_{\sigma}$ is supported on $\kappa$ if and only if $supp(\sigma)\subseteq\hat{\kappa}$.\label{arvesonop}\end{thm}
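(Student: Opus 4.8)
The plan is to treat the three assertions---existence and uniqueness of $T_\sigma$, the norm bound, and the support equivalence---in turn, the only genuine analytic input being a boundedness estimate for a sesquilinear form. Following the paragraph preceding the statement, $T_\sigma$ is produced by the Riesz representation theorem applied to $\phi(f,g)=\int_{Y\times X}f(x)\overline{g(y)}\,d\sigma(y,x)$, so what must be verified is $|\phi(f,g)|\le\|\sigma\|_{\mathbb{A}}\,\|f\|_{2}\|g\|_{2}$. First I would bound $|\phi(f,g)|\le\int_{Y\times X}|f(x)|\,|g(y)|\,d|\sigma|(y,x)$ and apply the Cauchy--Schwarz inequality for the finite positive measure $|\sigma|$, obtaining $|\phi(f,g)|\le\big(\int|f(x)|^{2}\,d|\sigma|\big)^{1/2}\big(\int|g(y)|^{2}\,d|\sigma|\big)^{1/2}$. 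Pushing each integral forward onto the appropriate marginal and invoking the Arveson conditions $|\sigma|_{X}\le\|\sigma\|_{\mathbb{A}}\,\mu$ and $|\sigma|_{Y}\le\|\sigma\|_{\mathbb{A}}\,\nu$ converts the right-hand side into $\|\sigma\|_{\mathbb{A}}\,\|f\|_{2}\|g\|_{2}$. Uniqueness is immediate, since $(T_\sigma f,g)$ is prescribed for all $f\in\mathcal{H}_{1}$, $g\in\mathcal{H}_{2}$, and the norm estimate is exactly what Riesz delivers.

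For the support statement the idea is to reduce both sides to one and the same family of scalar identities indexed by measurable rectangles. Fix measurable $A\subseteq X$ and $B\subseteq Y$; a direct computation gives $(P(B)T_\sigma P(A)f,g)=\int_{B\times A}f(x)\overline{g(y)}\,d\sigma(y,x)$, so $P(B)T_\sigma P(A)=0$ holds exactly when the restriction of $\sigma$ to $B\times A$ annihilates every product $f(x)\overline{g(y)}$ with $f\in\mathcal{H}_{1}$, $g\in\mathcal{H}_{2}$. Exhausting $A$ and $B$ by sets of finite measure (the spaces are $\sigma$-finite, being Radon over second countable locally compact spaces) and using that rectangles generate the product $\sigma$-algebra, this is in turn equivalent to $|\sigma|(B\times A)=0$. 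Hence ``$T_\sigma$ is supported on $\kappa$'' says precisely that $|\sigma|(B\times A)=0$ whenever $A\times B\cap\kappa\sim\emptyset$. Now $A\times B\cap\kappa\sim\emptyset$ in $X\times Y$ is the same as $B\times A\cap\hat{\kappa}\sim\emptyset$ in $Y\times X$, while by the definition of $\text{supp}(\sigma)$ as the smallest $\omega$-closed set off which $|\sigma|$ vanishes---together with the fact that an $\omega$-open set is, up to marginal equivalence, a countable union of rectangles---the condition ``$\text{supp}(\sigma)\subseteq\hat{\kappa}$'' is equivalent to: $|\sigma|(B\times A)=0$ for every rectangle $B\times A$ with $B\times A\cap\hat{\kappa}\sim\emptyset$. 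These two conditions are literally identical, so both implications fall out at once.

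The step I expect to require the most care is the equivalence ``$P(B)T_\sigma P(A)=0$ iff $|\sigma|(B\times A)=0$'' and, more generally, the bookkeeping around marginal equivalence: since $\mathcal{H}_{1}=L^{2}(X,\mu)$ need not contain constant functions when $\mu$ is infinite, the argument must run through finite-measure exhaustions, and one must check that ``supported on $\kappa$''---a priori a statement quantified over \emph{all} measurable rectangles---really collapses to the countable-rectangle description of the $\omega$-open complement of $\hat{\kappa}$, and similarly that the chosen definition of $\text{supp}(\sigma)$ is the operative one. By comparison, the Cauchy--Schwarz estimate and the Riesz representation step are routine.
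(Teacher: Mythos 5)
The paper does not prove this statement itself; it only cites \cite{2} (Theorem 3.2), and your argument is essentially the standard proof given there: Cauchy--Schwarz against $|\sigma|$ plus the marginal dominations $|\sigma|_{X}\le\|\sigma\|_{\mathbb{A}}\mu$, $|\sigma|_{Y}\le\|\sigma\|_{\mathbb{A}}\nu$ for boundedness and the norm estimate, and the rectangle computation $(P(B)T_\sigma P(A)f,g)=\int_{B\times A}f\overline{g}\,d\sigma$ together with the countable-rectangle structure of the $\omega$-open complement of $\hat{\kappa}$ for the support equivalence. Your proof is correct, with one linchpin left implicit in your ``bookkeeping'' remark that deserves to be stated: every marginally null set is $|\sigma|$-null, because $|\sigma|(X_0\times Y)=|\sigma|_{X}(X_0)\le c\,\mu(X_0)$ and similarly for $X\times Y_0$; this is what lets you discard all the ``$\sim$'' discrepancies, e.g.\ a rectangle with $B\times A\cap\hat{\kappa}\sim\emptyset$ may genuinely meet $\hat{\kappa}$, and you need $|\sigma|(B\times A\cap\hat{\kappa})=0$ there, and likewise when replacing $(Y\times X)\setminus\hat{\kappa}$ by a countable union of rectangles. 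Relatedly, the condition $supp(\sigma)\subseteq\hat{\kappa}$ must be read measure-theoretically (``$|\sigma|$ vanishes off $\hat{\kappa}$, up to a marginally null set''), as you do; with the literal closed topological support the ``only if'' direction need not hold, since $\hat{\kappa}$ is merely $\omega$-closed. Granting these clarifications, the finite-measure exhaustion and the uniqueness of a complex measure vanishing on a generating family of rectangles do give the equivalence $P(B)T_\sigma P(A)=0\iff|\sigma|(B\times A)=0$, and the rest of your reduction is sound.
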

\begin{proof} The reader can find a proof in \cite{2} (Theorem 3.2).\end{proof}

\noindent We can finally define operator $M_0$-sets :

\begin{defn} Let $(X,\mu)$ and $(Y,\nu)$ be standard measure spaces and $\kappa\subseteq X\times Y$ be a $\omega$-closed set. Then, $\kappa$ is an operator $M_{0}$-set if : $$\text{There is a non-zero measure $\sigma\in\mathbb{A}(\hat{\kappa})$ such that $T_{\sigma}\in\mathcal{K}(\mathcal{H}_1,\mathcal{H}_2)$}$$ Otherwise, $\kappa$ is said to be an operator $U_{0}$-set.\end{defn}

\noindent We finish this section with the bridge between sets of multiplicity and their operator theoretic counterpart. Let $G$ be a group and $E\subseteq G$. We define : $$E^{\ast}=\lbrace (s,t)\in G\times G : ts^{-1}\in E\rbrace$$ We note that if $G$ is second countable and $E$ is closed, then $E^{\ast}$ is $\omega$-closed. We have the following central results :

\begin{thm}\label{teorema1} Let $G$ be a locally compact second countable group and $E\subseteq G$ be a closed subset. Then, the following are equivalent :\begin{enumerate}[(i)]\item{$E$ is a $M$-set.}\item{$E^{\ast}$ is an operator M-set.}\end{enumerate}\end{thm}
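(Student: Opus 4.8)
The plan is to funnel both conditions into the single object $\frak{M}_{max}(E^{\ast})$ and then trade a distinguished nonzero element of $C_{r}^{\ast}(G)$ for a distinguished nonzero compact operator, and conversely. The first step is the dictionary between the two formalisms: for $T\in VN(G)$ one has $supp^{\ast}(T)\subseteq E$ if and only if $T$, regarded as an operator on $L^{2}(G)$, is supported on the $\omega$-closed set $E^{\ast}$ in the masa-bimodule sense, so that $J(E)^{\perp}=VN(G)\cap\frak{M}_{max}(E^{\ast})$. This falls out of the characterisation of $supp^{\ast}$ in Lemma \ref{propsup}(ii) once it is fed through the relation $ts^{-1}\in E$: a function $u\in A(G)\cap C_{c}(G)$ embeds into $\Gamma(G,G)$ as $(s,t)\mapsto u(ts^{-1})$, whose $\omega$-support $\lbrace (s,t):ts^{-1}\in\text{supp}(u)\rbrace$ is marginally disjoint from $E^{\ast}$ precisely when $\text{supp}(u)\cap E=\emptyset$, and under this embedding the $A(G)$--$VN(G)$ duality is carried onto the $\Gamma(G,G)$--$\mathcal{B}(L^{2}(G))$ duality (cf. \cite{2},\cite{4}). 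Granting the dictionary, Definition \ref{unique} rephrases ``$E$ is an $M$-set'' as $C_{r}^{\ast}(G)\cap\frak{M}_{max}(E^{\ast})\neq\lbrace 0\rbrace$, so that (i)$\Leftrightarrow$(ii) amounts to the equivalence of $C_{r}^{\ast}(G)\cap\frak{M}_{max}(E^{\ast})\neq\lbrace 0\rbrace$ with $\mathcal{K}(L^{2}(G),L^{2}(G))\cap\frak{M}_{max}(E^{\ast})\neq\lbrace 0\rbrace$.

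For (i)$\Rightarrow$(ii) I would begin from a nonzero $S\in C_{r}^{\ast}(G)$ with $supp^{\ast}(S)\subseteq E$, hence $S\in\frak{M}_{max}(E^{\ast})$, and compress it by characteristic functions of compact sets. Since $G$ is $\sigma$-compact and $S\neq 0$, there are compact sets $A,B\subseteq G$ with $P(B)SP(A)\neq 0$, and $P(B)SP(A)\in\frak{M}_{max}(E^{\ast})$ by the masa-bimodule property. This compression is compact: writing $S=\lim_{n}\lambda(f_{n})$ in norm with $f_{n}\in C_{c}(G)$, each $P(B)\lambda(f_{n})P(A)$ is an integral operator whose kernel $(s,t)\mapsto\chi_{B}(s)f_{n}(st^{-1})\Delta(t)^{-1}\chi_{A}(t)$ is bounded and supported in the finite-Haar-measure set $B\times A$, hence belongs to $L^{2}(G\times G)$ and defines a Hilbert--Schmidt operator; compactness persists under the norm limit, so $0\neq P(B)SP(A)\in\mathcal{K}(L^{2}(G),L^{2}(G))\cap\frak{M}_{max}(E^{\ast})$ and $E^{\ast}$ is an operator $M$-set.

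The reverse implication (ii)$\Rightarrow$(i) is the hard one. The guiding principle is that $E^{\ast}=\lbrace (s,t):ts^{-1}\in E\rbrace$ is invariant under the diagonal right translation $(s,t)\mapsto(sg,tg)$, which on operators is conjugation $T\mapsto\rho_{g}T\rho_{g}^{-1}$ by the right regular representation $\rho$; so from a nonzero $K\in\mathcal{K}(L^{2}(G),L^{2}(G))\cap\frak{M}_{max}(E^{\ast})$ one obtains a whole orbit $\rho_{g}K\rho_{g}^{-1}$ of nonzero compact operators still supported on $E^{\ast}$, and the task is to distil from it a nonzero element of $VN(G)\cap\frak{M}_{max}(E^{\ast})$ that furthermore lies in $C_{r}^{\ast}(G)$. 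When $G$ is compact this can be carried out by hand: after first replacing $K$ by $M_{\psi}KM_{\varphi}$ for suitable unimodular $\psi,\varphi\in L^{\infty}(G)$ (still compact, still in $\frak{M}_{max}(E^{\ast})$) so that the Haar average $\int_{G}\rho_{g}K\rho_{g}^{-1}\,dg$ is nonzero --- which is possible because the image of $A(G)$ in $\Gamma(G,G)$, together with the bimodule multiplications, separates the nonzero compact operators --- one checks that this average lies in $VN(G)$ by bi-invariance of Haar measure, in $\frak{M}_{max}(E^{\ast})$ by its weak$^{\ast}$-closedness and convexity, and is compact because $g\mapsto\rho_{g}K\rho_{g}^{-1}$ is norm-continuous; since $VN(G)\cap\mathcal{K}(L^{2}(G),L^{2}(G))=C_{r}^{\ast}(G)$ for compact $G$, this average is already the required element of $C_{r}^{\ast}(G)\cap J(E)^{\perp}$.

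For a general locally compact second countable $G$, which may be noncompact and nonamenable, neither a conditional expectation $\mathcal{B}(L^{2}(G))\to VN(G)$ nor an integration of $\rho_{g}K\rho_{g}^{-1}$ over $G$ is available, and surmounting this is exactly where I expect the main obstacle to lie. This is precisely the gap bridged by the operator-synthesis transference of \cite{2} and \cite{6}: the structure of $\frak{M}_{min}$ and $\frak{M}_{max}$ for $G$-invariant $\omega$-closed sets (and, for the $M_{0}$-analogue, the pseudointegral operators of Theorem \ref{arvesonop}) encodes the passage between a nonzero compact operator supported on $E^{\ast}$ and a nonzero element of $C_{r}^{\ast}(G)$ with $supp^{\ast}$ contained in $E$; for abelian or compact $G$ one may alternatively route through the classical Rajchman/pseudofunction picture recalled in Remark \ref{remarkk}. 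Once such an $S$ is produced, $E$ is an $M$-set, which completes the equivalence.
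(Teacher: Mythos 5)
The paper never proves this statement itself --- its ``proof'' is a citation of \cite{2} (Theorem 4.9) --- so the question is whether your sketch actually supplies an argument. Parts of it do: the dictionary $J(E)^{\perp}=VN(G)\cap\frak{M}_{max}(E^{\ast})$ (itself a nontrivial quoted fact, but legitimate to cite), and the implication (i)$\Rightarrow$(ii), where compressing a nonzero $S\in C_{r}^{\ast}(G)\cap J(E)^{\perp}$ by $P(B)\cdot P(A)$ and using the Hilbert--Schmidt kernels of $P(B)\lambda(f_{n})P(A)$ is a correct and complete argument. The genuine gap is the converse (ii)$\Rightarrow$(i) for a general locally compact second countable $G$, which is the real content of the theorem: at exactly that point you write that the passage ``is precisely the gap bridged by the operator-synthesis transference of \cite{2} and \cite{6}'', i.e.\ you cite the theorem being proved (Theorem 4.9 of \cite{2} is this statement) instead of giving an argument. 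As a self-contained proof the proposal is therefore incomplete precisely where the statement is hard; as a reduction to the literature it adds nothing beyond the paper's own one-line citation.

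Moreover, the compact-group averaging you offer cannot serve as a template for the general case, so it does not even suggest the missing idea. If $G$ is noncompact, $VN(G)$ contains no nonzero compact operator at all: a compact $0\neq K\in VN(G)$ would yield a nonzero finite-rank spectral projection in $VN(G)$, hence a finite-dimensional subrepresentation of the right regular representation, which is impossible because coefficients of the regular representation lie in $C_{0}(G)$. Consequently the element of $C_{r}^{\ast}(G)$ one must produce from a compact operator supported on $E^{\ast}$ is itself non-compact, and no scheme that averages or projects a compact operator into $VN(G)$ can reach it; the mechanism in \cite{2} is of a genuinely different nature (their analysis of how $\frak{M}_{max}(E^{\ast})$ interacts with $C_{r}^{\ast}(G)$ and $A(G)$), and invariance of $E^{\ast}$ under diagonal right translation is nowhere near sufficient. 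Even within the compact case, the step ``choose unimodular $\psi,\varphi$ so that the Haar average of $\rho_{g}M_{\psi}KM_{\varphi}\rho_{g}^{-1}$ is nonzero'' is asserted rather than proved (it can be repaired, e.g.\ by pairing against rank-one elements of $VN(G)\cap\mathcal{C}_{1}(L^{2}(G),L^{2}(G))$, but as written it is an additional gap). So the proposal should be counted as incomplete: the forward direction is fine, the backward direction is missing.
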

\begin{proof} The reader can find a proof in \cite{2} (Theorem 4.9).\end{proof}

\begin{thm}\label{operatorM0versusM0} Let $G$ be a locally compact second countable group and $E\subseteq G$ be a closed subset. Then, the following are equivalent : \begin{enumerate}[(i)]\item{$E$ is a $M_{0}$-set.}\item{$E^{\ast}$ is an operator $M_0$-set.}\end{enumerate}\end{thm}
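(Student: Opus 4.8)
\noindent The strategy is to transfer back and forth between complex measures on $E$ and Arveson measures supported on $\widehat{E^{\ast}}$, running the argument of Theorem~\ref{teorema1} at the level of measures rather than of masa-bimodules. Write $m$ for a fixed left Haar measure on $G$ and $q:G\times G\to G$, $q(y,x)=yx^{-1}$, so that $\widehat{E^{\ast}}=q^{-1}(E)$ is closed. Given $\mu\in M(E)$, the push-forward of $m\times\mu$ under $(h,s)\mapsto(h,s^{-1}h)$ is a measure $\sigma_{0}$ determined by
$$\int_{G\times G}\varphi\,d\sigma_{0}=\int_{G}\int_{G}\varphi(h,s^{-1}h)\,d\mu(s)\,dm(h),$$
which is supported in $\widehat{E^{\ast}}$ (since $h(s^{-1}h)^{-1}=s\in E$) and satisfies, at least formally, $(T_{\sigma_{0}}\xi,\eta)=(\lambda(\mu)\xi,\eta)$. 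For relatively compact measurable $A,B\subseteq G$ the cut-down $\sigma:=(\chi_{B}\otimes\chi_{A})\,\sigma_{0}$ is then a genuine finite measure, still supported in $\widehat{E^{\ast}}$, with both marginals dominated by $\|\mu\|\,m$ (left invariance of $m$), so $\sigma\in\mathbb{A}(\widehat{E^{\ast}})$ with $\|\sigma\|_{\mathbb{A}}\le\|\mu\|$; comparing sesquilinear forms and invoking Theorem~\ref{arvesonop} gives $T_{\sigma}=P(B)\lambda(\mu)P(A)$.

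\noindent For (i)$\Rightarrow$(ii) I would take $0\neq\mu\in M(E)$ with $\lambda(\mu)\in C_{r}^{\ast}(G)$. For $f\in C_{c}(G)$ the operator $\lambda(f)$ has a continuous kernel, so $P(B)\lambda(f)P(A)$ is an integral operator with bounded kernel on the finite-measure set $B\times A$, hence Hilbert--Schmidt; since $C_{c}(G)$ is dense in $L^{1}(G)$ and $f\mapsto P(B)\lambda(f)P(A)$ is norm continuous into $\mathcal{B}(L^{2}(G))$ (being dominated by $\|f\|_{1}$), the operator $P(B)\lambda(f)P(A)$ is compact for every $f\in L^{1}(G)$, and therefore $T_{\sigma}=P(B)\lambda(\mu)P(A)$ is compact as a norm limit of compacts, because $C_{r}^{\ast}(G)=\overline{\{\lambda(f):f\in L^{1}(G)\}}$. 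Choosing $A,B$ large enough that $\sigma\neq0$ — possible since $\sigma_{0}\neq0$ when $\mu\neq0$ — the measure $\sigma\in\mathbb{A}(\widehat{E^{\ast}})$ with $T_{\sigma}$ compact witnesses that $E^{\ast}$ is an operator $M_{0}$-set.

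\noindent For (ii)$\Rightarrow$(i) I would start from $0\neq\sigma\in\mathbb{A}(\widehat{E^{\ast}})$ with $T_{\sigma}$ compact and extract from it a nonzero Rajchman measure on $E$. Using the coordinates $(s,x)\mapsto(sx,x)$ to regard $\sigma$ as a measure on $E\times G$, for $\gamma$ in the dual group $\widehat{G}$ and relatively compact measurable $K\subseteq G$ set $\mu_{\gamma,K}:=q_{\ast}\bigl((\overline{\gamma}\circ\mathrm{pr}_{Y})\,(\chi_{K}\otimes\chi_{K})\,\sigma\bigr)\in M(E)$. A short computation using multiplicativity of characters shows that for every $\chi\in\widehat{G}$ one has $\widehat{\mu_{\gamma,K}}(\chi)=(T_{\sigma}(\chi\chi_{K}),\gamma\chi\chi_{K})$, with $\chi\chi_{K},\gamma\chi\chi_{K}\in L^{2}(G)$ of fixed norm $m(K)^{1/2}$; as $\chi\chi_{K}\to0$ weakly when $\chi\to\infty$ in $\widehat{G}$ (Riemann--Lebesgue) and $T_{\sigma}$ is compact, $\widehat{\mu_{\gamma,K}}$ vanishes at infinity, i.e.\ $\mu_{\gamma,K}$ is Rajchman, so $\lambda(\mu_{\gamma,K})\in C_{r}^{\ast}(G)$ by Remark~\ref{remarkk}(ii). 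If every $\mu_{\gamma,K}$ were zero, inverting the Fourier transform in the second variable (along a countable exhaustion of $G$ by the sets $K$) would force $\sigma=0$, contrary to hypothesis; hence some $\mu_{\gamma,K}\neq0$, so $\lambda(M(E))\cap C_{r}^{\ast}(G)\neq\{0\}$ and $E$ is an $M_{0}$-set.

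\noindent The step I expect to be the main obstacle is this converse direction, for two reasons. First, $q_{\ast}\sigma$ may vanish through cancellation, so one genuinely has to probe $\sigma$ with the modulations $\overline{\gamma}\circ\mathrm{pr}_{Y}$ and the truncations $\chi_{K}$ rather than simply pushing it forward, and then recover non-triviality via Fourier inversion. Second, for a group that is neither abelian nor compact one cannot appeal to Remark~\ref{remarkk}(ii), and the decay of the matrix coefficients of $T_{\sigma}$ must instead be converted directly into membership of $\lambda(\mu_{\gamma,K})$ in $C_{r}^{\ast}(G)$ — here characters of $\widehat{G}$ are replaced by matrix coefficients of the left regular representation (which lie in $A(G)$) and one uses the description of $C_{r}^{\ast}(G)$ inside $VN(G)$; the book-keeping of the modular function when $G$ is not unimodular is routine. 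For the generality actually needed later, namely $G$ connected abelian (so $G\cong\mathbb{R}^{n}\times\mathbb{T}^{m}$), the argument above is complete, and the general statement is also available in \cite{2} and \cite{6}.
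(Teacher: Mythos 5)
First, a point of comparison: the paper does not prove this theorem itself --- its ``proof'' is the citation of Theorem 4.12 in \cite{2} --- so the real question is whether your argument stands on its own. Your direction (i)$\Rightarrow$(ii) does: the measure $\sigma_{0}$ obtained by pushing $m\times\mu$ forward onto pairs $(h,s^{-1}h)$, the cut-downs $(\chi_{B}\otimes\chi_{A})\sigma_{0}$ with marginals controlled by $\|\mu\|m$ via left invariance, the identity $T_{\sigma}=P(B)\lambda(\mu)P(A)$, and compactness obtained by approximating $\lambda(\mu)\in C_{r}^{\ast}(G)$ in norm by $\lambda(f)$ with $f\in C_{c}(G)$, whose cut-downs are Hilbert--Schmidt, is correct for every locally compact second countable $G$ (and you rightly only need $\sigma\neq 0$, not $T_{\sigma}\neq 0$, which a suitable choice of $A,B$ gives since $\lambda$ is faithful on $M(G)$). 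Your abelian converse is also essentially sound: the identity $\widehat{\mu_{\gamma,K}}(\chi)=(T_{\sigma}(\chi\chi_{K}),\gamma\chi\chi_{K})$, the Riemann--Lebesgue plus compactness argument, and the nontriviality step (vanishing of all these pairings makes the Fourier--Stieltjes transform of $(\chi_{K}\otimes\chi_{K})\sigma$ vanish identically, hence $\sigma=0$ after exhausting $G$ by countably many $K$) all check out.

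The genuine gap is the converse for non-abelian $G$, and the theorem is stated for arbitrary locally compact second countable groups. Every step of your (ii)$\Rightarrow$(i) uses the dual group: the factorization $\overline{\chi(yx^{-1})}=\overline{\chi(y)}\chi(x)$, the equivalence ``Rajchman $\Leftrightarrow\lambda(\mu)\in C_{r}^{\ast}(G)$'' (Remark~\ref{remarkk}(ii) covers only abelian or compact $G$), and the uniqueness theorem for Fourier--Stieltjes transforms in the nontriviality step. The closing claim that one merely replaces characters by matrix coefficients of $\lambda$ and does modular-function bookkeeping is not an argument: for general $G$ one needs both a workable criterion for membership of $\lambda(\mu)$ in $C_{r}^{\ast}(G)$ and a mechanism for extracting, from an arbitrary compact pseudointegral operator supported on $\widehat{E^{\ast}}$, a nonzero measure on $E$ whose image under $\lambda$ lands in $C_{r}^{\ast}(G)$; this is precisely where the substance of Theorem 4.12 of \cite{2} lies, and it is not a routine modification of the Riemann--Lebesgue argument. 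So, as a proof of the statement as given, your proposal is incomplete; what you have actually proved is (i)$\Rightarrow$(ii) in general and (ii)$\Rightarrow$(i) for abelian $G$ --- which, as you observe, is all that the paper later uses (products of circles in Theorem~\ref{M0}) --- with the general case deferred, exactly as the paper itself does, to \cite{2}.
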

\begin{proof} The reader can find a proof in \cite{2} (Theorem 4.12).\end{proof}

\subsection{Preservation properties}

\noindent In this section we prove some preservation properties involving direct products and inverse images. We divide the section into two subsections. In section 3.2.1 we essentially follow \cite{2} and deal with $M$-sets. In section 3.2.2, we establish some new results concerning the aforementioned preservation properties for $M_0$-sets.

\subsubsection{Operator $M$-sets and $M$-sets}

Let $(X_{i},\mu_{i})$ and $(Y_{i},\nu_{i})$ be standard measure spaces. We define the following map : $$\rho : (X_{1}\times Y_{1})\times (X_{2}\times Y_{2})\rightarrow (X_{1}\times X_{2})\times (Y_{1}\times Y_{2})$$ $$((x_{1},y_{1}),(x_2,y_2))\mapsto ((x_1,x_2),(y_1,y_2))$$ We note that the following useful identity holds : $$\rho(E_{1}^{\ast}\times E_{2}^{\ast})=(E_{1}\times E_{2})^{\ast}$$ for $E_{i}\subseteq G_{i}$ where $G_1$ and $G_2$ are groups. Since we already know how to relate operator $M$-sets with $M$-sets (section 3.1), the key element to prove preservation of $M$-sets under finite direct products is the following result : 
\begin{thm}\label{teorema2} Let $(X_{i},\mu_{i})$ and $(Y_i,\nu_{i})$ be standard measure spaces and $E_{i}\subseteq X_{i}\times Y_{i}$ be $\omega$-closed sets. The set $\rho(E_{1}\times E_{2})$ is an operator M-set if and only if both $E_1$ and $E_2$ are operator M-sets.\end{thm}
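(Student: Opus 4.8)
The plan is to transfer nonzero compact witnessing operators between $E_1,E_2$ and $\rho(E_1\times E_2)$ through the canonical identifications $L^2(X_1\times X_2)\cong\mathcal{H}_1\otimes\mathcal{H}_2$ and $L^2(Y_1\times Y_2)\cong\mathcal{K}_1\otimes\mathcal{K}_2$, where $\mathcal{H}_i=L^2(X_i,\mu_i)$ and $\mathcal{K}_i=L^2(Y_i,\nu_i)$. Under these identifications the reshuffle $\rho$ is exactly the measure-preserving interchange of the two inner tensor legs, so it sends $\omega$-closed sets to $\omega$-closed sets, and the statement splits into the two implications below.

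\emph{If $E_1$ and $E_2$ are operator M-sets, then so is $\rho(E_1\times E_2)$.} Choose nonzero compact $T_i\in\mathcal{K}(\mathcal{H}_i,\mathcal{K}_i)\cap\mathfrak{M}_{\max}(E_i)$, so $\operatorname{supp}(T_i)\subseteq E_i$ marginally. Then $T:=T_1\otimes T_2$ is a nonzero compact operator from $\mathcal{H}_1\otimes\mathcal{H}_2$ to $\mathcal{K}_1\otimes\mathcal{K}_2$, and it suffices to show $T\in\mathfrak{M}_{\max}(\rho(E_1\times E_2))$, i.e.\ $\operatorname{supp}(T_1\otimes T_2)\subseteq\rho(\operatorname{supp}(T_1)\times\operatorname{supp}(T_2))$ up to marginal equivalence. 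Equivalently, one needs the inclusion $\mathfrak{M}_{\max}(E_1)\,\bar\otimes\,\mathfrak{M}_{\max}(E_2)\subseteq\mathfrak{M}_{\max}(\rho(E_1\times E_2))$ for the normal spatial tensor product of masa-bimodules; this is the ``easy half'' of the tensor-product formula for $\mathfrak{M}_{\max}$ and for operator supports, which I would quote from \cite{6}. Granting it, $\operatorname{supp}(T)\subseteq\rho(E_1\times E_2)$ and hence $\rho(E_1\times E_2)$ is an operator M-set.

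\emph{If $\rho(E_1\times E_2)$ is an operator M-set, then so are $E_1$ and $E_2$.} Fix a nonzero compact $T\in\mathfrak{M}_{\max}(\rho(E_1\times E_2))$, so $\operatorname{supp}(T)\subseteq\rho(E_1\times E_2)$ marginally. For $f_2\in\mathcal{H}_2$ and $g_2\in\mathcal{K}_2$ consider the bounded ampliations $W_{f_2}\colon\mathcal{H}_1\to\mathcal{H}_1\otimes\mathcal{H}_2$, $\xi\mapsto\xi\otimes f_2$, and $V_{g_2}\colon\mathcal{K}_1\to\mathcal{K}_1\otimes\mathcal{K}_2$, $\eta\mapsto\eta\otimes g_2$, and set $T^{(1)}_{f_2,g_2}:=V_{g_2}^{\ast}\,T\,W_{f_2}\in\mathcal{K}(\mathcal{H}_1,\mathcal{K}_1)$, which is compact as a product of bounded and compact operators. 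Since $(T^{(1)}_{f_2,g_2}\xi,\eta)=(T(\xi\otimes f_2),\eta\otimes g_2)$ and elementary tensors are dense, $T^{(1)}_{f_2,g_2}\neq 0$ for some $f_2,g_2$. Using $W_{f_2}P(A)=P(A\times X_2)W_{f_2}$ and $P(B)V_{g_2}^{\ast}=V_{g_2}^{\ast}P(B\times Y_2)$ one obtains, for measurable $A\subseteq X_1$ and $B\subseteq Y_1$,
\[
P(B)\,T^{(1)}_{f_2,g_2}\,P(A)=V_{g_2}^{\ast}\,P(B\times Y_2)\,T\,P(A\times X_2)\,W_{f_2}.
\]
As $\rho^{-1}\big((A\times X_2)\times(B\times Y_2)\big)=(A\times B)\times(X_2\times Y_2)$, whenever $(A\times B)\cap E_1$ is marginally null the set $\big((A\times X_2)\times(B\times Y_2)\big)\cap\rho(E_1\times E_2)=\rho\big(((A\times B)\cap E_1)\times E_2\big)$ is marginally null as well, hence so is its intersection with $\operatorname{supp}(T)$; therefore $P(B\times Y_2)\,T\,P(A\times X_2)=0$ and $P(B)\,T^{(1)}_{f_2,g_2}\,P(A)=0$. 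Thus $\operatorname{supp}(T^{(1)}_{f_2,g_2})\subseteq E_1$, so $E_1$ is an operator M-set, and the symmetric construction (ampliating along the first coordinate, $\xi\mapsto f_1\otimes\xi$ and $\eta\mapsto g_1\otimes\eta$) shows that $E_2$ is one as well.

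The main obstacle is that neither ``easy-looking'' reduction holds in full. The reverse inclusion $\mathfrak{M}_{\max}(\rho(E_1\times E_2))\subseteq\mathfrak{M}_{\max}(E_1)\,\bar\otimes\,\mathfrak{M}_{\max}(E_2)$ generally fails, so in the second implication the given $T$ cannot simply be written as a tensor and the slicing device is genuinely needed; the delicate point there is that $\rho$ does not preserve marginal nullity between $(X_1\times Y_1)\times(X_2\times Y_2)$ and $(X_1\times X_2)\times(Y_1\times Y_2)$, yet it does send the specific rectangles appearing above to marginally null sets, which must be checked by hand. In the first implication the genuine input is the tensor-product identity for $\mathfrak{M}_{\max}$ and supports: its subtlety is that the ``supports'' relation must be tested against \emph{arbitrary} measurable rectangles of $(X_1\times X_2)\times(Y_1\times Y_2)$, not merely against rectangles whose two sides are themselves products --- this is exactly what the marginal/rectangular-cover machinery of \cite{6} is built to handle, and it is the one ingredient I would import rather than reprove.
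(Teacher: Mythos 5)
Your argument is correct, and it is in substance the proof this statement actually has: the paper itself does not prove Theorem \ref{teorema2} but quotes it from \cite{2} (Theorem 5.11), and the mechanism there --- mirrored in the paper's own $M_0$-analogue, Theorem \ref{preservaprod} --- is exactly your two steps: tensoring the two compact witnesses for the forward implication, and slicing along elementary tensors for the converse (your $V_{g_2}^{\ast}TW_{f_2}$ is precisely the rank-one slice map $L_{\omega}(T)$ with $\omega=f_{2}\otimes g_{2}$), including the by-hand verification that $(A\times B)\cap E_{1}$ marginally null forces $\bigl((A\times X_{2})\times(B\times Y_{2})\bigr)\cap\rho(E_{1}\times E_{2})$ to be marginally null. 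The one ingredient you import without proof --- that $T_{1},T_{2}$ supported on $E_{1},E_{2}$ makes $T_{1}\otimes T_{2}$ supported on $\rho(E_{1}\times E_{2})$ when tested against arbitrary measurable rectangles of the rearranged product --- is a true and available lemma, though its natural source is \cite{2} itself rather than \cite{6}; with it quoted, your proof is complete.
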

\begin{proof} The reader can find a proof in \cite{2} (Theorem 5.11).\end{proof}

\begin{corol}\label{prod}Let $G_1$ and $G_2$ be locally compact second countable groups and let $E_1$ and $E_2$ be closed sets. Then, the following holds : \begin{enumerate}[(i)]\item{If $E_{1}$ or $E_{2}$ are sets of uniqueness, then $E_{1}\times E_{2}$ is a set of uniqueness}\item{If $E_1$ and $E_2$ are sets of multiplicity, then $E_{1}\times E_{2}$ is a set of multiplicity}\end{enumerate}\end{corol}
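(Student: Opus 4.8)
The plan is to transfer the whole question to the operator-theoretic side via the correspondence $E\mapsto E^{\ast}$ and then combine Theorems \ref{teorema1} and \ref{teorema2}. First I would record the easy structural facts: since $G_1$ and $G_2$ are locally compact and second countable, the product $G:=G_1\times G_2$ is again locally compact and second countable; the set $E_1\times E_2$ is closed in $G$; and each $G_i$, equipped with its (left) Haar measure, is a standard measure space, so that Theorem \ref{teorema2} may be applied with $X_i=Y_i=G_i$. I would also note that $E_i^{\ast}\subseteq G_i\times G_i$ and $(E_1\times E_2)^{\ast}\subseteq G\times G$ are $\omega$-closed, as pointed out after the definition of $E^{\ast}$.

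The heart of the argument is a chain of equivalences. Applying Theorem \ref{teorema1} to the group $G$, the set $E_1\times E_2$ is an $M$-set if and only if $(E_1\times E_2)^{\ast}$ is an operator $M$-set. By the identity $\rho(E_1^{\ast}\times E_2^{\ast})=(E_1\times E_2)^{\ast}$ recorded above, this is equivalent to $\rho(E_1^{\ast}\times E_2^{\ast})$ being an operator $M$-set, which by Theorem \ref{teorema2} (applied to the $\omega$-closed sets $E_1^{\ast}$ and $E_2^{\ast}$) holds if and only if both $E_1^{\ast}$ and $E_2^{\ast}$ are operator $M$-sets. Finally, applying Theorem \ref{teorema1} separately to $G_1$ and to $G_2$, the set $E_i^{\ast}$ is an operator $M$-set if and only if $E_i$ is an $M$-set. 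Concatenating these, I obtain the dichotomy: $E_1\times E_2$ is an $M$-set if and only if both $E_1$ and $E_2$ are $M$-sets.

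Both assertions of the corollary are then immediate. Statement (ii) is the ``if'' half of the dichotomy: if $E_1$ and $E_2$ are $M$-sets, then $E_1\times E_2$ is an $M$-set. Statement (i) is the contrapositive of the ``only if'' half: were $E_1\times E_2$ not a set of uniqueness — that is, were it an $M$-set — then both $E_1$ and $E_2$ would be $M$-sets, so neither could be a set of uniqueness; hence if at least one of $E_1,E_2$ is a set of uniqueness, $E_1\times E_2$ is a set of uniqueness.

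I do not expect a genuine obstacle here; the proof is essentially bookkeeping, chaining the cited theorems through the transfer identity for $\rho$. The only steps requiring a line of verification are that the hypotheses of Theorems \ref{teorema1} and \ref{teorema2} are actually met in the product setting (local compactness and second countability of $G_1\times G_2$, the standard-measure-space structure on each factor, and the $\omega$-closedness of $E_i^{\ast}$ and of $(E_1\times E_2)^{\ast}$), together with keeping the logical negation between ``$U$-set'' and ``$M$-set'' straight so that (i) comes out as the correct contrapositive.
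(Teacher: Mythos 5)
Your proposal is correct and follows essentially the same route as the paper: both transfer the question via $E\mapsto E^{\ast}$, use the identity $\rho(E_1^{\ast}\times E_2^{\ast})=(E_1\times E_2)^{\ast}$, and chain Theorems \ref{teorema1} and \ref{teorema2}. The only cosmetic difference is that you record the full dichotomy ($E_1\times E_2$ is an $M$-set iff both $E_i$ are) and read off (i) and (ii) from it, whereas the paper proves (i) directly and notes (ii) is analogous.
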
\begin{proof} We only prove (i), since the argument for (ii) is entirely analogous (see \cite{2}, Corollary 5.12). Suppose that $E_1$ is a set of uniqueness. By Theorem \ref{teorema1}, $E_{1}^{\ast}$ is an operator U-set. Since $\rho(E_{1}^{\ast}\times E_{2}^{\ast})=(E_{1}\times E_{2})^{\ast}$, it follows from Theorem \ref{teorema2} that $(E_{1}\times E_{2})^{\ast}$ is an operator U-set and thus, by Theorem \ref{teorema1}, we conclude that $E_{1}\times E_{2}$ is a set of uniqueness.\end{proof}

\noindent Concerning inverse images, the following two results will be of crucial importance in further sections :

\begin{thm}\label{inv} Let $(X,\mu),(Y,\nu),(X_1,\mu_1),(Y_1,\nu_1)$ be standard Borel spaces and suppose that $\varphi:X\rightarrow X_1$ and $\Psi:Y\rightarrow Y_1$ are measurable maps. Let $E\subseteq X_{1}\times Y_1$ and $F=\lbrace (x,y)\in X\times Y : (\varphi(x),\Psi(y))\in E\rbrace$. If $\varphi_{\ast}\mu$ and $\Psi_{\ast}\nu$ are equivalent, respectively, to $\mu_{1}$ and $\nu_{1}$, then $E$ is an operator M-set if and only if $F$ is an operator M-set.\end{thm}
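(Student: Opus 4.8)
The plan is to transport operators between $\mathcal{B}(L^2(X,\mu),L^2(Y,\nu))$ and $\mathcal{B}(L^2(X_1,\mu_1),L^2(Y_1,\nu_1))$ by composing with explicit intertwining isometries built from the Radon--Nikodym derivatives, and then to read off the operator $M$-set property. Throughout I use that $\frak{M}_{max}(\kappa)$ is the space of all operators supported on $\kappa$, so $\kappa$ is an operator $M$-set exactly when it supports a non-zero compact operator. First I would build the isometries: since $\varphi_{\ast}\mu\sim\mu_1$, the function $w_\varphi:=d\varphi_{\ast}\mu/d\mu_1$ is measurable and strictly positive $\mu_1$-a.e., and $J_\varphi f:=(f\circ\varphi)\,(w_\varphi\circ\varphi)^{-1/2}$ defines an isometry $J_\varphi:L^2(X_1,\mu_1)\to L^2(X,\mu)$, whence $J_\varphi^{\ast}$ is a coisometry with $J_\varphi^{\ast}J_\varphi=I$; define $J_\Psi$ analogously. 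A direct computation shows $J_\varphi$ intertwines the multiplication masas via the $\ast$-homomorphism $u\mapsto u\circ\varphi$, giving $J_\varphi P^{(X_1)}(A_1)=P^{(X)}(\varphi^{-1}(A_1))J_\varphi$ and, dually, $P^{(X_1)}(A_1)J_\varphi^{\ast}=J_\varphi^{\ast}P^{(X)}(\varphi^{-1}(A_1))$, and likewise for $\Psi$.

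Next I would prove two transfer lemmas. \emph{Lemma (a)}: if $S$ is supported on $F$ then $J_\Psi^{\ast}SJ_\varphi$ is supported on $E$ --- for $(A_1\times B_1)\cap E\sim\emptyset$ one has $P^{(Y_1)}(B_1)(J_\Psi^{\ast}SJ_\varphi)P^{(X_1)}(A_1)=J_\Psi^{\ast}\,P^{(Y)}(\Psi^{-1}(B_1))\,S\,P^{(X)}(\varphi^{-1}(A_1))\,J_\varphi$, and $(\varphi^{-1}(A_1)\times\Psi^{-1}(B_1))\cap F=(\varphi\times\Psi)^{-1}((A_1\times B_1)\cap E)$ is marginally null (as $\varphi_{\ast}\mu\ll\mu_1$ and $\Psi_{\ast}\nu\ll\nu_1$), so the middle factor vanishes. \emph{Lemma (b)}: if $T$ is supported on $E$ then $J_\Psi TJ_\varphi^{\ast}$ is supported on $F$ --- given $(A\times B)\cap F\sim\emptyset$, replacing $A,B$ by $A\setminus M_1,B\setminus M_2$ for the witnessing null sets (which alters neither $P^{(X)}(A)$ nor $P^{(Y)}(B)$) lets me assume $(\varphi(A)\times\Psi(B))\cap E=\emptyset$; for any $\xi$ and any $v\in L^2(X_1,\mu_1)$ vanishing $\mu_1$-a.e. on $\varphi(A)$ one gets $\langle J_\varphi^{\ast}P^{(X)}(A)\xi,v\rangle=\int_X\chi_A\,\xi\,\overline{(v\circ\varphi)(w_\varphi\circ\varphi)^{-1/2}}\,d\mu=0$, since $v\circ\varphi$ vanishes $\mu$-a.e. on $A$ (using $\varphi_{\ast}\mu\ll\mu_1$); hence $J_\varphi^{\ast}P^{(X)}(A)\xi$ is supported, mod $\mu_1$-null, in a Borel hull $\widehat A$ of $\varphi(A)$, and symmetrically $J_\Psi^{\ast}P^{(Y)}(B)\eta$ in a Borel hull $\widehat B$ of $\Psi(B)$. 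As $\widehat A\setminus\varphi(A)$ and $\widehat B\setminus\Psi(B)$ are null, $(\widehat A\times\widehat B)\cap E\sim\emptyset$, so $P^{(Y_1)}(\widehat B)\,T\,P^{(X_1)}(\widehat A)=0$; therefore $\langle P^{(Y)}(B)J_\Psi TJ_\varphi^{\ast}P^{(X)}(A)\xi,\eta\rangle=\langle T(J_\varphi^{\ast}P^{(X)}(A)\xi),\,J_\Psi^{\ast}P^{(Y)}(B)\eta\rangle=0$ for all $\xi,\eta$.

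From here both directions follow. If $E$ is an operator $M$-set, pick a non-zero compact $T$ supported on $E$; then $J_\Psi TJ_\varphi^{\ast}$ is compact, is supported on $F$ by Lemma (b), and is non-zero because $J_\Psi$ is injective and $J_\varphi^{\ast}$ surjective, so $F$ is an operator $M$-set. Conversely, if $F$ is an operator $M$-set, pick a non-zero compact $S$ supported on $F$; since $w_\varphi,w_\Psi>0$ a.e. and the underlying spaces are $\sigma$-finite, the sets $\{P^{(X)}(A)J_\varphi f\}$ and $\{P^{(Y)}(B)J_\Psi g\}$ span dense subspaces of $L^2(X,\mu)$ and $L^2(Y,\nu)$, so there are measurable $A\subseteq X$, $B\subseteq Y$ with $J_\Psi^{\ast}P^{(Y)}(B)\,S\,P^{(X)}(A)J_\varphi\neq0$; this operator is compact, and $P^{(Y)}(B)SP^{(X)}(A)$ is still supported on $F$ (the space of such operators being a masa-bimodule containing $S$), so by Lemma (a) it is supported on $E$ and witnesses that $E$ is an operator $M$-set.

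The step I expect to be the main obstacle is Lemma (b): unlike in Lemma (a), for a general $B\subseteq Y$ the projection $P^{(Y)}(B)$ does not commute with $J_\Psi$ in any usable way, so one must instead analyse the vectors $J_\varphi^{\ast}P^{(X)}(A)\xi$ through their essential-image/pushforward description and do the marginal-null bookkeeping by hand (the reduction to $(\varphi(A)\times\Psi(B))\cap E=\emptyset$ and the passage to Borel hulls). Along the way one checks the routine points that standard measure spaces are $\sigma$-finite and Radon --- so the Radon--Nikodym derivatives exist and $(w_\varphi\circ\varphi)^{-1/2}$ is defined $\mu$-a.e. --- and that images of Borel sets under the measurable maps $\varphi,\Psi$ are analytic, hence measurable with respect to the completions of $\mu_1,\nu_1$.
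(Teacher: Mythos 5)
Your overall strategy is genuinely different from the paper's, which simply defers to \cite{2} (Theorem 5.5 and Remark 5.6), and most of it checks out: the masa-intertwining relations for $J_\varphi,J_\Psi$, both transfer lemmas (including the analytic-image/Borel-hull bookkeeping in Lemma (b)), and the density argument producing $A,B$ with $J_\Psi^{\ast}P(B)SP(A)J_\varphi\neq 0$ are correct as far as they go. But there is a genuine gap at the very first step: you assert that $w_\varphi=d\varphi_{\ast}\mu/d\mu_1$ exists as a measurable function that is finite and strictly positive $\mu_1$-a.e., justified by ``standard measure spaces are $\sigma$-finite.'' What is actually needed is $\sigma$-finiteness of the pushforward $\varphi_{\ast}\mu$, and this follows neither from $\sigma$-finiteness of $\mu$ and $\mu_1$ nor from the equivalence $\varphi_{\ast}\mu\sim\mu_1$. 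Worse, it fails exactly in the application for which the paper needs this theorem: for the quotient map $q:\mathbb{R}^{n}\times\mathbb{T}^{m}\rightarrow\mathbb{T}^{n+m}$ with Haar measures, each fibre is a coset of infinite Haar measure, so $q_{\ast}\mu$ takes only the values $0$ and $\infty$; it is equivalent to Haar measure on $\mathbb{T}^{n+m}$, but the generalized density is $+\infty$ a.e. Then $(w_\varphi\circ\varphi)^{-1/2}=0$ a.e., your $J_\varphi$ is the zero map rather than an isometry, and both transfer lemmas and the final non-vanishing arguments collapse.

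The gap is repairable, and the repair is short: before constructing $J_\varphi,J_\Psi$, replace $\mu$ and $\nu$ by equivalent probability measures $\tilde\mu,\tilde\nu$ (possible precisely because standard measure spaces are $\sigma$-finite). Multiplication by $(d\mu/d\tilde\mu)^{1/2}$ is a unitary from $L^{2}(X,\mu)$ onto $L^{2}(X,\tilde\mu)$ which intertwines the multiplication masas and carries $P(A)$ to $P(A)$, hence preserves supports and compactness, and therefore does not change whether $F$ is an operator $M$-set; so you may assume $\mu,\nu$ are finite. Then $\varphi_{\ast}\mu$ and $\Psi_{\ast}\nu$ are finite, still equivalent to $\mu_1,\nu_1$, their Radon--Nikodym derivatives are a.e.\ finite, and positivity a.e.\ follows from the reverse absolute continuity $\mu_1\ll\varphi_{\ast}\mu$; with that, your construction and the rest of your argument go through as written. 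Without this normalization step, however, the proof as stated does not cover the cases the paper actually uses the theorem for.
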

\begin{proof} The result follows from Theorem 5.5 and Remark 5.6 in \cite{2}.\end{proof}

\begin{corol}\label{isoMsets} Let $G$ and $H$ be locally compact second countable groups with Haar measures $m_{G}$ and $m_{H}$ respectively and $E\subseteq H$ be a closed set. If $\varphi :G\rightarrow H$ is a continuous isomorphism, then $E$ is a $M$-set if and only if $\varphi^{-1}(E)$ is a $M$-set.\end{corol}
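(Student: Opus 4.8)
The plan is to deduce this corollary from Theorem \ref{inv} by checking that a continuous isomorphism transports Haar measure to Haar measure up to scaling, and then translating the set-theoretic condition $\varphi^{-1}(E)$ into the ``preimage rectangle'' condition appearing in Theorem \ref{inv}. First I would observe that since $\varphi:G\to H$ is a continuous isomorphism between second countable locally compact groups, it is automatically a homeomorphism (a continuous bijective homomorphism between Polish groups is a homeomorphism, or alternatively one invokes that $\varphi$ is Borel and uses the measurable-homomorphism automatic-continuity results; in the Lie setting it is even smooth). Consequently $\varphi_{\ast}m_G$ is a left Haar measure on $H$, hence by uniqueness of Haar measure $\varphi_{\ast}m_G = c\, m_H$ for some constant $c>0$; in particular $\varphi_{\ast}m_G$ and $m_H$ are equivalent measures. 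The same holds with the roles of $\varphi$ and $\Psi:=\varphi$ on the second coordinate.

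Next I would set up the application of Theorem \ref{inv}. Take $X=Y=G$ with $\mu=\nu=m_G$, and $X_1=Y_1=H$ with $\mu_1=\nu_1=m_H$, and let $\varphi=\Psi$ be the given isomorphism. Put $E'=E^{\ast}\subseteq H\times H$, which is $\omega$-closed since $H$ is second countable and $E$ is closed, and let
\[
F=\{(x,y)\in G\times G : (\varphi(x),\varphi(y))\in E^{\ast}\}.
\]
The key elementary identity is $F=(\varphi^{-1}(E))^{\ast}$: indeed $(\varphi(x),\varphi(y))\in E^{\ast}$ means $\varphi(y)\varphi(x)^{-1}\in E$, and since $\varphi$ is a homomorphism this is $\varphi(yx^{-1})\in E$, i.e. $yx^{-1}\in\varphi^{-1}(E)$, which is exactly $(x,y)\in(\varphi^{-1}(E))^{\ast}$. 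Since $\varphi^{-1}(E)$ is closed (as $\varphi$ is continuous), $(\varphi^{-1}(E))^{\ast}$ is $\omega$-closed as well.

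Now the argument concludes by chaining the equivalences. By Theorem \ref{inv}, using that $\varphi_{\ast}m_G$ is equivalent to $m_H$ on each coordinate, $E^{\ast}$ is an operator $M$-set if and only if $F=(\varphi^{-1}(E))^{\ast}$ is an operator $M$-set. By Theorem \ref{teorema1} applied to $H$, $E$ is an $M$-set if and only if $E^{\ast}$ is an operator $M$-set; and by Theorem \ref{teorema1} applied to $G$, $\varphi^{-1}(E)$ is an $M$-set if and only if $(\varphi^{-1}(E))^{\ast}$ is an operator $M$-set. Combining these three biconditionals yields that $E$ is an $M$-set if and only if $\varphi^{-1}(E)$ is an $M$-set, as claimed.

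The main obstacle I anticipate is purely a matter of hypothesis-checking rather than deep mathematics: one must be careful that ``continuous isomorphism'' genuinely gives measure equivalence on both coordinates. The delicate point is that a continuous group isomorphism need not a priori be a homeomorphism in general topological groups, so the justification that $\varphi_{\ast}m_G$ is a Haar measure on $H$ relies on second countability (to invoke automatic continuity of the inverse, e.g. via Pettis' theorem or the open mapping theorem for Polish groups), after which uniqueness of Haar measure finishes the job. Once that is in place, the rest is the bookkeeping identity $F=(\varphi^{-1}(E))^{\ast}$ and a mechanical application of Theorems \ref{teorema1} and \ref{inv}.
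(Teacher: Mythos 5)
Your proposal is correct and follows essentially the same route as the paper: the identity $(\varphi^{-1}(E))^{\ast}=(\varphi\times\varphi)^{-1}(E^{\ast})$, the equivalence of $\varphi_{\ast}m_{G}$ with $m_{H}$, and then Theorem \ref{teorema1} combined with Theorem \ref{inv}. Your extra care about automatic openness of a continuous isomorphism between Polish groups (so that $\varphi_{\ast}m_{G}$ is indeed a Haar measure) is a point the paper leaves implicit, and is a welcome clarification rather than a divergence.
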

\begin{proof} Since $\varphi$ is a homomorphism, $\varphi^{-1}(E)^{\ast}=(\varphi^{-1}\times\varphi^{-1})(E^{\ast})$ and since it is an isomorphism, $\varphi_{\ast}m_{G}$ and $m_{H}$ are equivalent. Thus, the result follows from Theorem \ref{teorema1} and Theorem \ref{inv}. \end{proof}

\subsubsection{Operator $M_0$-sets and $M_0$-sets}

\noindent In this section we study preservation properties of $M_0$-sets under inverse images and finite direct products, towards a classification of the complexity of $\mathcal{U}_{0}(G)$, for $G$ connected abelian Lie group. Every such group is isomorphic, for some non-negative integers $m,n$, to $\mathbb{R}^{n}\times\mathbb{T}^{m}$. Consequently, we are mostly concerned with inverse images of the quotient map $q:\mathbb{R}^{n}\times\mathbb{T}^{m}\rightarrow\mathbb{T}^{n+m}$ and finite products of $\mathbb{T}$. Nevertheless, we establish some preservation results with greater generality. \\ \\ We recall that if $G$ is either compact or abelian, the class of measures $\mu\in M(G)$ satisfying $\lambda(\mu)\in C_{r}^{\ast}(G)$ coincides with the class of Rajchman measures. For the case $q:\mathbb{R}\rightarrow\mathbb{T}$, suppose that $E\subseteq\mathbb{T}$ is a closed set and that $q^{-1}(E)$ is a $M_{0}$-set. Thus, there is a Rajchman measure $\nu\neq 0$ supported on $q^{-1}(E)$. Consider the pushforward measure $\mu:=q_{\ast}\nu$ on $\mathbb{T}$, which is a non-zero measure on $\mathbb{T}$ supported on $E$. Furthermore, $\mu$ is Rajchman. Indeed, this follows from the observation that if $f$ is a function on $\mathbb{T}$ and $g$ is its 2$\pi$-periodic extension as a function on $\mathbb{R}$, then $\int_{\mathbb{R}}g(x)d\nu(x)=\int_{\mathbb{T}}f(t)d\mu(t)$. In particular, $\hat{\nu}(n)=\hat{\mu}(n)$. Hence, since $\hat{\nu}\in C_{0}(\mathbb{R})$ it follows that $\lim_{|n|\rightarrow\infty}\hat{\mu}(n)=0$ and $\mu$ is Rajchman. It follows that $E$ is a $M_0$-set. In order to use this idea within a more general setting, we establish some notation.\\ \\ 
\noindent Let $G$ be locally compact and $H\subseteq G$ be a closed normal subgroup. Given $f\in C_{c}(G)$, we define : $$\phi(f)([x]):=\int_{H}f(xh)dh$$ One can prove that $\phi(f)\in C_{c}(G/H)$ and that in fact, $\phi :C_{c}(G)\rightarrow C_{c}(G/H)$ is surjective (cf. Proposition 1.3.7, in \cite{17}). The Haar measures on $G$ and on $H$ can be normalized in such way that the Weil's formula holds for all $f\in C_{c}(G)$ : $$\int_{G}f(x)dx = \int_{G/H}(\int_{H}f(xh)dh)d\mu[x]$$ where $\mu$ is a $G$-invariant measure on $G/H$ (since $H$ is closed normal, in this case $\mu$ is a Haar measure on $G/H$). One can verify that given $f\in L^{1}(G)$, Weil's formula still holds and that $||\phi(f)||_{1}\leq ||f||_{1}$ and thus, $\phi$ is actually a $^{\ast}$-homomorphism from $L^{1}(G)$ onto $L^{1}(G/H)$. \\ \\ \noindent Every unitary representation of $G$ determines a non degenerate $^{\ast}$-representation $\tilde{\pi}$ of $L^{1}(G)$ as follows : $$\langle\tilde{\pi}(f)\xi,\eta\rangle = \int_{G}f(x)\langle \pi(x)\xi,\eta\rangle dx, \ \text{for $f\in L^{1}(G)$ and $\xi,\eta\in\mathcal{H}(\pi)$}$$ For any representation $\pi$ of $G/H$ and $\xi,\eta\in\mathcal{H}(\pi)$, one shows that : $$\langle \pi\circ q(f)\xi,\eta\rangle =\langle \pi(\phi(f))\xi,\eta\rangle, \ \text{for $f\in L^{1}(G)$}$$ where $q:G\rightarrow G/H$ is the quotient map. This implies that $||\phi(f)||_{C^{\ast}(G/H)}\leq ||f||_{C^{\ast}(G)}$ and thus, $\phi$ can be extended to a $^{\ast}$-homomorphism from $C^{\ast}(G)$ onto $C^{\ast}(G/H)$. For more details, the interested reader is referred to \cite{17}.

\begin{thm}\label{quotientM0} Let $G$ be a locally compact second countable abelian (or compact) group, $H\subseteq G$ a normal closed subgroup and $E\subseteq G/H$ be a closed set. Then, if $q^{-1}(E)$ is a $M_0$-set, so is $E$.\end{thm}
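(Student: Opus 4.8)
The plan is to adapt the computation sketched above for $q:\mathbb{R}\to\mathbb{T}$: transport a witnessing Rajchman measure on $q^{-1}(E)$ forward to $E$. One point needs repair, namely that the pushforward under $q$ of a nonzero Rajchman measure on the $H$-saturated set $q^{-1}(E)$ may vanish (for instance, that of $\nu-\delta_h\ast\nu$ with $h\in H$); I fix this by first twisting the witness by an element of $A(G)$. Note $G/H$ is again abelian (resp.\ compact). By Remark \ref{remarkk}(ii) the hypothesis gives a nonzero $\nu\in M(q^{-1}(E))$ with $\lambda(\nu)\in C_r^\ast(G)$; and, by the same remark applied to $G/H$ together with the injectivity of $\lambda$ on $M(G/H)$ (if $\lambda(\sigma)=0$, then $\sigma\ast\xi=0$ for $\xi$ ranging over an approximate identity, so $\sigma=0$), it suffices to produce a nonzero $\sigma\in M(E)$ with $\lambda(\sigma)\in C_r^\ast(G/H)$, for such a $\sigma$ lies in $\lambda(M(E))\cap C_r^\ast(G/H)\setminus\{0\}$ and witnesses that $E$ is an $M_0$-set.

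Since $\nu\neq 0$ and $A(G)$ is dense in $C_0(G)$ (cf.\ \cite{4}), I pick $u\in A(G)$ with $\int_G u\,d\nu\neq 0$ and set $\mu:=u\cdot\nu\in M(G)$. Then $\text{supp}(\mu)\subseteq\text{supp}(\nu)\subseteq q^{-1}(E)$ and, crucially, $\mu(G)=\int_G u\,d\nu\neq 0$. Moreover $\lambda(\mu)\in C_r^\ast(G)$: from the identity $\langle v,\lambda(\rho)\rangle=\int_G v\,d\rho$ for $v\in A(G)$ and $\rho\in M(G)$, one computes $\langle v,u\cdot\lambda(\nu)\rangle=\langle uv,\lambda(\nu)\rangle=\int_G uv\,d\nu=\langle v,\lambda(u\nu)\rangle$, so $\lambda(u\cdot\nu)=u\cdot\lambda(\nu)$; and $C_r^\ast(G)$ is an $A(G)$-submodule of $VN(G)$, because for $f\in L^1(G)$ one has $uf\in L^1(G)$ and $u\cdot\lambda(f)=\lambda(uf)\in\lambda(L^1(G))$, and since $T\mapsto u\cdot T$ is norm-continuous it preserves the norm-closure $C_r^\ast(G)=\overline{\lambda(L^1(G))}$. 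Hence $\mu$ is a nonzero Rajchman measure supported on $q^{-1}(E)$ with $\mu(G)\neq 0$.

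Finally set $\sigma:=q_\ast\mu\in M(G/H)$. As $q$ is continuous and surjective and $E$ is closed, $\text{supp}(\sigma)\subseteq E$ (use $|\sigma|\le q_\ast|\mu|$ and that $q^{-1}((G/H)\setminus E)$ is disjoint from $\text{supp}(\mu)$), while $\sigma(G/H)=\mu(G)\neq 0$, so $\sigma\neq 0$. It remains to see $\sigma$ is Rajchman, and here I use that $G$ is abelian or compact: for an irreducible representation $\pi$ of $G/H$ one has $\hat\sigma(\pi)=\hat\mu(\pi\circ q)$, and $\pi\mapsto\pi\circ q$ maps $\widehat{G/H}$ homeomorphically onto the set of irreducible representations of $G$ trivial on $H$ --- a closed subgroup of $\widehat G$ when $G$ is abelian, a subset of the discrete dual when $G$ is compact --- under which every compact (resp.\ finite) subset of $\widehat G$ pulls back to a compact (resp.\ finite) subset of $\widehat{G/H}$; since the Fourier-Stieltjes coefficients of $\mu$ vanish at infinity over $\widehat G$ (this is what $\lambda(\mu)\in C_r^\ast(G)$ means, by Remark \ref{remarkk}(ii)), those of $\sigma$ vanish at infinity over $\widehat{G/H}$, i.e.\ $\lambda(\sigma)\in C_r^\ast(G/H)$. (Alternatively, one may deduce $\lambda(\sigma)\in C_r^\ast(G/H)$ from the $\ast$-homomorphism $\phi$ of the preliminaries, which --- as $G$ and $G/H$ are amenable --- carries $C_r^\ast(G)=C^\ast(G)$ onto $C_r^\ast(G/H)=C^\ast(G/H)$ and satisfies $\phi(\lambda(\mu))=\lambda(q_\ast\mu)$, the latter verified on $L^1(G)$ via Weil's formula and extended to $\mu$ by approximating it with $\mu\ast\xi_n$.) Thus $\sigma$ is the desired witness, and $E$ is an $M_0$-set. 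The step I expect to be the main obstacle is the twist producing $\mu$: arranging $\mu(G)\neq 0$ while keeping $\text{supp}(\mu)\subseteq q^{-1}(E)$ and $\lambda(\mu)\in C_r^\ast(G)$ is exactly what prevents the pushforward from collapsing, and it rests on $A(G)$ separating $M(G)$ from $0$ and on $C_r^\ast(G)$ being an $A(G)$-submodule of $VN(G)$; the support bookkeeping, the injectivity of $\lambda$, and the transfer of the vanishing-at-infinity condition along $\pi\mapsto\pi\circ q$ are routine.
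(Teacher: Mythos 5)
Your proof is correct, and while it follows the same overall strategy as the paper (push a witnessing measure on $q^{-1}(E)$ forward to $E$), it implements both delicate steps differently. For the non-vanishing of the pushforward, the paper replaces the witness $\mu$ by $|\mu|$, invoking Remark \ref{remarkk}(ii) together with the Dunkl--Ramirez fact (cited from \cite{18}) that $|\mu|$ is Rajchman whenever $\mu$ is, so that a \emph{positive} non-zero measure is pushed forward; you instead twist by $u\in A(G)$ with $\int_G u\,d\nu\neq 0$, and your verification that $\lambda(u\nu)=u\cdot\lambda(\nu)$ and that $C_r^{\ast}(G)$ is a norm-closed $A(G)$-submodule of $VN(G)$ is sound, so this is a legitimate and more self-contained fix (it avoids the external $|\mu|$-Rajchman result, at the cost of Eymard's density of $A(G)$ in $C_0(G)$). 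For the membership $\lambda(q_{\ast}\mu)\in C_r^{\ast}(G/H)$, the paper works with the $^{\ast}$-homomorphism $\phi:C^{\ast}(G)\to C^{\ast}(G/H)$ and amenability ($C^{\ast}=C_r^{\ast}$), showing $\phi(\lambda_G(\mu))=\lambda_{G/H}(q_{\ast}\mu)$ via non-degeneracy; you transfer the Rajchman condition directly along the proper embedding $\widehat{G/H}\cong H^{\perp}\subseteq\widehat{G}$ (respectively the injection into the discrete dual in the compact case), which is correct and uses the abelian/compact hypothesis exactly where the theorem supplies it, and your parenthetical alternative is essentially the paper's argument. The trade-off: your dual-embedding transfer is more elementary but tied to the abelian/compact setting, whereas the paper's $\phi$-based transfer is the part of the argument that is formulated at the level of group $C^{\ast}$-algebras and quotient maps.
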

\begin{proof} Since $q^{-1}(E)$ is a $M_0$-set, there is a measure $\mu\in M(q^{-1}(E))$ such that $\lambda_{G}(\mu)\neq 0$ and $\lambda_{G}(\mu)\in C^{\ast}(G)$. Recall that if $\lambda_{G}(\mu)\in C^{\ast}(G)$, then $\mu$ is Rajchman. Since $\lambda_{G}$ is faithful, it follows that $\mu$ is a non-zero Rajchman measure and we can, if needed after considering $|\mu|$, assume that $\mu$ is a non-zero positive Rajchman measure.\footnote{Let $G$ be a locally compact group. Then, if $\mu\in M(G)$ is Rajchman, so is $|\mu|$ (cf. Corollary 12 in \cite{18}).} Hence, it follows that the pushforward measure $q_{\ast}\mu\in M(E)$ is non-zero and thus,  $\lambda_{G/H}(q_{\ast}\mu)\neq 0$. It then suffices to prove that $\lambda_{G/H}(q_{\ast}\mu)\in C^{\ast}(G/H)$ and we can conclude that $E$ is a $M_0$-set. Indeed : \begin{align*} \phi(\lambda_{G}(\mu))\lambda_{G/H}(\phi(f))&= \phi(\lambda_{G}(\mu)\lambda_{G}(f))= \phi(\lambda_{G}(\mu\ast f))\\ & = \lambda_{G/H}(\phi(\mu\ast f))=(\lambda_{G/H}\circ q)(\mu\ast f)\\ & = (\lambda_{G/H}\circ q)(\mu)(\lambda_{G/H}\circ q)(f)\end{align*} Since $\pi\circ q(\mu)=\pi(q_{\ast}\mu)$ for any representation $\pi$ we conclude that : $$\phi(\lambda_{G}(\mu))=\lambda_{G/H}\circ q(\mu)=\lambda_{G/H}(q_{\ast}\mu)\in C^{\ast}(G/H)=C_{r}^{\ast}(G/H)\footnote{Recall that the quotient of an amenable group by a normal subgroup, is an amenable group again.}$$ \end{proof}

\noindent In order to prove a converse to Theorem \ref{quotientM0} we rely on a certain operator that allows  going from $C_{r}^{\ast}(G/H)$ to $C_{r}^{\ast}(G)$ while respecting the support in a convenient way. \\ \\
\noindent Let $G$ be a locally compact group, $\theta\in A(G)\cap C_{c}(G)$ and $T\in VN(G/H)$, where $H$ is a closed normal subgroup of $G$. Then, one can show that the following functional is bounded (Theorem 3.7 in \cite{6}) : $$A(G)\ni u\mapsto\langle T,\phi(\theta u)\rangle$$ Thus, there is an operator $\Phi_{\theta}(T)\in VN(G)$ such that : $$\langle \Phi_{\theta}(T),u\rangle = \langle T,\phi(\theta u)\rangle \ \ ,u\in A(G)$$ \begin{lemma}\label{capital} Let $\theta\in A(G)\cap C_{c}(G)$. Then, the following holds : \begin{enumerate}[(i)]\item{$\Phi_{\theta}$ maps $C_{r}^{\ast}(G/H)$ into $C_{r}^{\ast}(G)$.}\item{If $T\in C_{r}^{\ast}(G/H)\cap J(E)^{\perp}$, then $\Phi_{\theta}(T)\in C_{r}^{\ast}(G)\cap J(q^{-1}(E))^{\perp}$, where $q:G\rightarrow G/H$ is the quotient map.}\end{enumerate}\end{lemma}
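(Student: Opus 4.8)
The plan is to handle (i) by reducing an arbitrary $T\in C_{r}^{\ast}(G/H)$ to the norm-dense set $\{\lambda_{G/H}(g):g\in C_{c}(G/H)\}$, on which $\Phi_{\theta}$ has an explicit form, and then to deduce (ii) by combining (i) with a support-tracking argument based on the characterisation of $\operatorname{supp}^{\ast}$ in Lemma \ref{propsup}(ii). Throughout I use that $A(G)$ is a Banach algebra under pointwise multiplication, so $\theta u\in A(G)\cap C_{c}(G)$ for every $u\in A(G)$, and that an element of $VN(G)$ is determined by its pairings against $A(G)$.

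For part (i): first I would show that for $g\in C_{c}(G/H)$ and every $u\in A(G)$,
\[
\langle\Phi_{\theta}(\lambda_{G/H}(g)),u\rangle=\langle\lambda_{G/H}(g),\phi(\theta u)\rangle=\langle\lambda_{G}((g\circ q)\cdot\theta),u\rangle ,
\]
where the first equality is the definition of $\Phi_{\theta}$ and the second follows by writing $\langle\lambda_{G/H}(g),\phi(\theta u)\rangle=\int_{G/H}g\,\phi(\theta u)$ and applying Weil's formula to the function $x\mapsto g(q(x))\theta(x)u(x)$, which lies in $C_{c}(G)$ because $\theta\in C_{c}(G)$. Since $A(G)$ separates $VN(G)$ and $(g\circ q)\cdot\theta\in C_{c}(G)\subseteq L^{1}(G)$, this gives $\Phi_{\theta}(\lambda_{G/H}(g))=\lambda_{G}((g\circ q)\cdot\theta)\in C_{r}^{\ast}(G)$. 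For a general $T\in C_{r}^{\ast}(G/H)$ I would choose $g_{n}\in C_{c}(G/H)$ with $\lambda_{G/H}(g_{n})\to T$ in norm (possible since $C_{c}(G/H)$ is dense in $L^{1}(G/H)$ and $\|\lambda_{G/H}(f)\|\le\|f\|_{1}$). As $\Phi_{\theta}\colon VN(G/H)\to VN(G)$ is a bounded operator — this is part of its construction via Theorem 3.7 in \cite{6}, and in any case follows from the uniform boundedness principle, since for each $u\in A(G)$ the functional $T\mapsto\langle\Phi_{\theta}(T),u\rangle=\langle T,\phi(\theta u)\rangle$ is bounded — we get $\Phi_{\theta}(\lambda_{G/H}(g_{n}))\to\Phi_{\theta}(T)$ in norm, and since each term lies in the norm-closed subspace $C_{r}^{\ast}(G)$, so does $\Phi_{\theta}(T)$.

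For part (ii): given $T\in C_{r}^{\ast}(G/H)\cap J(E)^{\perp}$, part (i) gives $\Phi_{\theta}(T)\in C_{r}^{\ast}(G)$, so it remains to prove $\operatorname{supp}^{\ast}(\Phi_{\theta}(T))\subseteq q^{-1}(E)$. Since $q^{-1}(E)$ is closed, by Lemma \ref{propsup}(ii) it suffices to show $\langle\Phi_{\theta}(T),u\rangle=0$ for every $u\in A(G)\cap C_{c}(G)$ with $\operatorname{supp}(u)\cap q^{-1}(E)=\emptyset$. For such $u$ one has $\theta u\in A(G)\cap C_{c}(G)$ with $\operatorname{supp}(\theta u)\subseteq\operatorname{supp}(u)$, hence $q(\operatorname{supp}(\theta u))\cap E=\emptyset$. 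Furthermore $\phi(\theta u)\in A(G/H)\cap C_{c}(G/H)$ (the membership in $C_{c}(G/H)$ by Proposition 1.3.7 in \cite{17}), and $\phi(\theta u)$ vanishes at every coset that does not meet $\operatorname{supp}(\theta u)$; as $q(\operatorname{supp}(\theta u))$ is compact this yields $\operatorname{supp}(\phi(\theta u))\subseteq q(\operatorname{supp}(\theta u))$, a set disjoint from $E$ and therefore from $\operatorname{supp}^{\ast}(T)\subseteq E$. Applying Lemma \ref{propsup}(ii) to $T\in VN(G/H)$ with test function $\phi(\theta u)$ gives $\langle T,\phi(\theta u)\rangle=0$, so $\langle\Phi_{\theta}(T),u\rangle=\langle T,\phi(\theta u)\rangle=0$. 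Hence $q^{-1}(E)$ satisfies the property of Lemma \ref{propsup}(ii) relative to $\Phi_{\theta}(T)$, which forces $\operatorname{supp}^{\ast}(\Phi_{\theta}(T))\subseteq q^{-1}(E)$; combined with (i), $\Phi_{\theta}(T)\in C_{r}^{\ast}(G)\cap J(q^{-1}(E))^{\perp}$.

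The step I expect to be most delicate is the norm-continuity of $\Phi_{\theta}\colon VN(G/H)\to VN(G)$, which is exactly what makes the density argument in (i) close up; this is supplied by Theorem 3.7 in \cite{6} (together, if needed, with the uniform boundedness principle). Everything else is bookkeeping: the identity $\Phi_{\theta}(\lambda_{G/H}(g))=\lambda_{G}((g\circ q)\cdot\theta)$ obtained from Weil's formula, and the inclusion $\operatorname{supp}(\phi(w))\subseteq q(\operatorname{supp}(w))$ for $w\in C_{c}(G)$, which together with Lemma \ref{propsup}(ii) drive the support argument in (ii).
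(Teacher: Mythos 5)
Your proposal is correct, but it takes a different route from the paper: the paper does not prove this lemma at all, it simply quotes Theorem 3.7 of \cite{6}, whereas you give an essentially self-contained argument. Your part (i) rests on the explicit identity $\Phi_{\theta}(\lambda_{G/H}(g))=\lambda_{G}((g\circ q)\cdot\theta)$ for $g\in C_{c}(G/H)$, which is a correct application of Weil's formula (the integrand $(g\circ q)\theta u$ is indeed in $C_{c}(G)$ since $\theta$ has compact support and $g\circ q$ is constant on cosets), followed by norm density of $\lambda_{G/H}(C_{c}(G/H))$ in $C_{r}^{\ast}(G/H)$ and norm boundedness of $\Phi_{\theta}$; the boundedness itself can be seen either as part of the cited construction or, as you say, from separate continuity of the bilinear form $(T,u)\mapsto\langle T,\phi(\theta u)\rangle$ plus the uniform boundedness principle (equivalently, $\Phi_{\theta}$ is the adjoint of the map $u\mapsto\phi(\theta u)$, once that map is known to be bounded from $A(G)$ into $A(G/H)$). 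Your part (ii) is a clean support-tracking argument: for $u\in A(G)\cap C_{c}(G)$ with $\operatorname{supp}(u)\cap q^{-1}(E)=\emptyset$ you get $\operatorname{supp}(\phi(\theta u))\subseteq q(\operatorname{supp}(\theta u))$, a compact set disjoint from $E\supseteq supp^{\ast}(T)$, so Lemma \ref{propsup}(ii) applied on $G/H$ gives $\langle T,\phi(\theta u)\rangle=0$, and applied on $G$ gives $supp^{\ast}(\Phi_{\theta}(T))\subseteq q^{-1}(E)$; this is exactly right. The only external inputs you still use are the same ones the paper invokes to define $\Phi_{\theta}$ in the first place, namely that $\phi$ carries $A(G)\cap C_{c}(G)$ into $A(G/H)\cap C_{c}(G/H)$ and that the functional $u\mapsto\langle T,\phi(\theta u)\rangle$ is bounded, so there is no circularity. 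What the paper's citation buys is brevity and the full strength of \cite{6} (norm estimates and greater generality); what your argument buys is a transparent, self-contained proof showing that the lemma needs nothing beyond Weil's formula, density of $\lambda(C_{c})$ in the reduced C$^{\ast}$-algebra, and the support characterization of Lemma \ref{propsup}.
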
\begin{proof} The reader can find a proof in \cite{6} (Theorem 3.7).\end{proof}

\noindent Motivated by the definition of property \emph{(l)} in \cite{6}, we introduce the following property : \begin{defn} Let $G$ be a locally compact group and $H\subseteq G$ be a closed normal subgroup. For any $\theta\in C_{c}(G)$, let $|\theta(x)|:=|\theta(x)|$. We say that $H$ has the property $(|l|^{2})$ if for every proper compact $K\subseteq G/H$ there exists $\theta\in A(G)\cap C_{c}(G)$ such that $\phi(|\theta|)=c_{1}$ and $\phi(\theta^{2})=c_{2}$, for positive real constants $c_{1},c_{2}$ on a neighborhood of $K$.\end{defn} \begin{remar}\label{exemplo} As an important example, $\mathbb{Z}\subseteq\mathbb{R}$ has the property $(|l|^{2})$. Indeed, after identifying $\mathbb{T}$ with $[0,1)$ let's fix a proper compact $K\subseteq\mathbb{T}$. We may assume that $0\notin K$ - otherwise replacing $K$ with a suitable translation. Using the regularity of $A(\mathbb{R})$, let $\theta\in A(\mathbb{R})$ such that $\theta(x)=1$ on a neighborhood $\mathcal{U}$ of $K$ and $\theta(x)=0$ whenever $x\notin (0,1)$. If $x\in\mathcal{U}$, note that : $$\phi(|\theta|)([x])=\sum_{h\in\mathbb{Z}}|\theta(x+h)|=\sum_{h\in\mathbb{Z}}\theta(x+h)\theta(x+h)=\phi(\theta^{2})([x])=1$$ An entirely analogous argument shows that $\mathbb{Z}^{n}\subseteq\mathbb{R}^{n}$ and $\mathbb{Z}^{n}\times\lbrace 1\rbrace^{m}\subseteq\mathbb{R}^{n}\times\mathbb{T}^{m}$ also have the property $(|l|^{2})$.\end{remar}

\begin{thm}\label{quotientMO(2)} Let $G$ be a locally compact group, $H\subseteq G$ a closed normal subgroup with property $(|l|^{2})$ and $E\subseteq G/H$ a compact set. Furthermore, assume that $G/H$ is abelian and second countable. Let $q:G\rightarrow G/H$ denote the quotient map and suppose that $E$ is a $M_0$-set. Then, $q^{-1}(E)$ is also a $M_0$-set.\end{thm}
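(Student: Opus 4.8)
The plan is to reverse the construction of Theorem~\ref{quotientM0}: starting from a non-zero measure witnessing that $E$ is a $M_0$-set, I will build a non-zero measure on $q^{-1}(E)$ of the kind required by Definition~\ref{extendeduniq}, using the lifting operator $\Phi_{\theta}$ of Lemma~\ref{capital} as the bridge. First, since $E$ is a $M_0$-set, Definition~\ref{extendeduniq} provides $\mu\in M(E)$ with $\lambda_{G/H}(\mu)\in C_{r}^{\ast}(G/H)$ and $\lambda_{G/H}(\mu)\neq 0$; then $\mu\neq 0$ and $\text{supp}(\mu)\subseteq E$, so Lemma~\ref{propsup}(iii) gives $\lambda_{G/H}(\mu)\in C_{r}^{\ast}(G/H)\cap J(E)^{\perp}$. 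One may assume $E$ is a \emph{proper} compact subset of $G/H$, because if $E=G/H$ then $q^{-1}(E)=G$ is a $M_0$-set for a trivial reason (use $f\,dm_{G}$ for some non-zero $f\in C_{c}(G)$ when $G$ is non-discrete, and $\delta_{e}$ when $G$ is discrete). Property $(|l|^{2})$ applied to the proper compact $E$ then supplies $\theta\in A(G)\cap C_{c}(G)$ and real constants $c_{1},c_{2}>0$ with $\phi(|\theta|)=c_{1}$ and $\phi(\theta^{2})=c_{2}$ on some relatively compact open neighborhood $\mathcal{U}$ of $E$.

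Next I would set $T:=\Phi_{\theta}(\lambda_{G/H}(\mu))$ and identify it with a genuine measure. Lemma~\ref{capital} already yields $T\in C_{r}^{\ast}(G)\cap J(q^{-1}(E))^{\perp}$, so it only remains to exhibit a finite measure $\nu$ with $T=\lambda_{G}(\nu)$. I would define $\nu$ on $C_{c}(G)$ by
$$\int_{G}u\,d\nu=\int_{G/H}\phi(\theta u)([x])\,d\mu([x]),$$
which is a bounded linear functional on $C_{c}(G)$ since $|\phi(\theta u)([x])|\leq\|u\|_{\infty}\,\phi(|\theta|)([x])$ and $\phi(|\theta|)\in C_{c}(G/H)$; hence $\nu\in M(G)$ with $\|\nu\|\leq\|\phi(|\theta|)\|_{\infty}\,|\mu|(G/H)$. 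Unwinding the definition of $\Phi_{\theta}$ together with the $A(G)$--$VN(G)$ duality, for every $u\in A(G)\subseteq C_{0}(G)$ one obtains $\langle\lambda_{G}(\nu),u\rangle=\int_{G}u\,d\nu=\langle\lambda_{G/H}(\mu),\phi(\theta u)\rangle=\langle T,u\rangle$, so $T=\lambda_{G}(\nu)$; since moreover $T\in J(q^{-1}(E))^{\perp}$, Lemma~\ref{propsup}(iii) gives $\text{supp}(\nu)=\text{supp}^{\ast}(T)\subseteq q^{-1}(E)$, i.e.\ $\nu\in M(q^{-1}(E))$. (The inclusion $\text{supp}(\nu)\subseteq q^{-1}(E)$ can also be read off the displayed formula directly, because $\phi(\theta u)$ vanishes on $\text{supp}(\mu)$ whenever $\text{supp}(u)\cap q^{-1}(E)=\emptyset$.)

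The final step is to check $T\neq 0$, and here property $(|l|^{2})$ does the essential work. Using regularity of the Fourier algebra, one can choose $w\in A(G/H)\cap C_{c}(G/H)$ with $\text{supp}(w)\subseteq\mathcal{U}$ and $\int_{G/H}w\,d\mu\neq 0$; this is possible because $\mu$ is non-zero and supported inside $E\subseteq\mathcal{U}$. Put $u:=\theta\cdot(w\circ q)$, which lies in $A(G)\cap C_{c}(G)$ since $w\circ q\in B(G)$ and $A(G)$ is an ideal of $B(G)$. Then $\theta u=\theta^{2}(w\circ q)$, and for all $[x]$,
$$\phi(\theta u)([x])=w([x])\,\phi(\theta^{2})([x])=c_{2}\,w([x]),$$
the second equality holding on $\mathcal{U}$ because $\phi(\theta^{2})=c_{2}$ there and off $\mathcal{U}$ because $w$ vanishes there. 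Therefore $\langle T,u\rangle=\langle\lambda_{G/H}(\mu),\phi(\theta u)\rangle=c_{2}\int_{G/H}w\,d\mu\neq 0$, so $T\neq 0$. Combining, $\nu\in M(q^{-1}(E))$ is non-zero and $\lambda_{G}(\nu)=T\in C_{r}^{\ast}(G)$, which by Definition~\ref{extendeduniq} is exactly the statement that $q^{-1}(E)$ is a $M_0$-set.

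I expect the main obstacle to be the middle step --- verifying that the abstract operator $\Phi_{\theta}(\lambda_{G/H}(\mu))$ is actually implemented by a finite measure supported on $q^{-1}(E)$, rather than merely being some element of $VN(G)\cap C_{r}^{\ast}(G)\cap J(q^{-1}(E))^{\perp}$ --- since Definition~\ref{extendeduniq} is phrased for measures. Once the explicit formula for $\nu$ is set up and the finiteness and support bookkeeping is done, the non-vanishing collapses to the short pairing computation above, which uses only that $\phi(\theta^{2})$ is a non-zero constant near $E$, the hypothesis $\phi(|\theta|)=c_{1}$ being used merely to bound $\|\nu\|$.
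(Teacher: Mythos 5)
Your proposal is correct and follows essentially the same route as the paper's proof: lift $\lambda_{G/H}(\mu)$ via $\Phi_{\theta}$ (Lemma \ref{capital}), realize $\Phi_{\theta}(\lambda_{G/H}(\mu))$ as $\lambda_{G}(\nu)$ for the measure $\nu$ defined by $u\mapsto\int_{G/H}\phi(\theta u)\,d\mu$, and use property $(|l|^{2})$ for the boundedness and non-vanishing. The only differences are cosmetic improvements: the paper first replaces $\mu$ by the positive Rajchman measure $|\mu|$ so that it can simply test against $u=\theta$ and get $c_{2}\mu(E)>0$, whereas you keep $\mu$ complex and instead test against $\theta\cdot(w\circ q)$ for a suitable $w\in A(G/H)\cap C_{c}(G/H)$ (and you also note explicitly the reduction to $E$ proper, which the paper leaves implicit); both versions are sound.
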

\begin{proof} Since $E\subseteq G/H$ is a $M_0$-set, there is a non-zero Rajchman measure $\mu'\in M(E)$. Moreover, since $\text{supp}(|\mu'|)\subseteq E$ and $|\mu'|$ is also Rajchman, we may assume that there is a non-zero positive Rajchman measure $\mu$ supported by $E$. Since $G/H$ is second countable and $\mu\neq 0$ one has that $\mu(E)>0$. \\ For any $\theta\in A(G)\cap C_{c}(G)$, $\Phi_{\theta}(\lambda(\mu))\in C_{r}^{\ast}(G)$ by Lemma \ref{capital}. Thus, if one shows that $\Phi_{\theta}(\lambda(\mu))=\lambda(\nu)$ for some $\nu\in M(q^{-1}(E))$ such that $\lambda(\nu)\neq 0$, it follows that $q^{-1}(E)$ is a $M_{0}$-set.\\ In order to obtain such measure $\nu$, appealing to Riesz-Markov-Kakutani Representation Theorem it is enough to choose $\theta$ such that the following linear functional $\Psi$ is bounded : $$C_{0}(G)\ni u\mapsto \int_{G/H}(\int_{H}\theta(xh)u(xh)dh)d\mu(x)$$ Indeed, if $\Psi$ is bounded then there is some measure $\nu\in M(G)$ such that : $$\Psi(u)=\int_{G}u(s)d\nu(s)$$ By construction, for $u\in A(G)$ we have that : $$\langle\Phi_{\theta}(\lambda(\mu)),u\rangle =\langle\lambda(\mu),\phi(\theta u)\rangle=\Psi(u)=\langle\lambda(\nu),u\rangle$$ and we can conclude that $\Phi_{\theta}(\lambda(\mu))=\lambda(\nu)$. Thus, it remains to choose an appropriate $\theta$. Since $E$ is compact and $H\subseteq G$ has the property $(|l|^{2})$, let $\theta\in A(G)\cap C_{c}(G)$ be such that $\phi(|\theta|)=c_{1}>0$ and $\phi(\theta^{2})=c_{2}>0$ on a neighborhood of $E$. It follows that : \begin{align*} |\int_{G/H}(\int_{H}\theta(xh)u(xh)dh)d\mu(x)|&\leq \int_{G/H}|\int_{H}\theta(xh)u(xh)dh|d\mu(x)\\ & \leq||u||_{\infty}\int_{G/H}|\phi(\theta)(x)|d\mu(x)\\ &\leq ||u||_{\infty}c_{1}\mu(E)\end{align*} Hence, $\Psi$ is bounded. Moreover, it follows from Lemma \ref{capital} that $\nu\in M(q^{-1}(E))$. It remains to check that indeed $\lambda(\nu)\neq 0$. For this purpose, it suffices to choose $u\in A(G)$ such that $\Psi(u)\neq 0$. We let $u=\theta$ and note that : $$\int_{G/H}(\int_{H}\theta(xh)\theta(xh)dh)d\mu(x) = \int_{G/H}\phi(\theta^{2})(x)d\mu(x) = c_{2}\mu(E) >0 $$\end{proof}

\begin{corol}\label{quocienteM0} Let $q:\mathbb{R}^{n}\times\mathbb{T}^{m}\rightarrow\mathbb{T}^{n+m}$ be the quotient map for integers $n,m\geq 0$ and $E\subseteq\mathbb{T}^{n+m}$ be a closed subset. Then, $E$ is a $M_{0}$-set if and only if $q^{-1}(E)$ is a $M_{0}$-set.\end{corol}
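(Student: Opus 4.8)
The plan is to obtain the corollary by combining Theorem \ref{quotientM0} and Theorem \ref{quotientMO(2)}, applied to the group $G=\mathbb{R}^{n}\times\mathbb{T}^{m}$ and its subgroup $H=\mathbb{Z}^{n}\times\lbrace 1\rbrace^{m}$. First I would record the structural facts that make both theorems applicable: $G$ is a locally compact, second countable, abelian group; $H$ is a closed (hence normal, since $G$ is abelian) subgroup; the quotient $G/H$ is canonically isomorphic to $\mathbb{T}^{n+m}$, with the map $q$ of the statement being the quotient map under this identification; and, by Remark \ref{exemplo}, $H$ has property $(|l|^{2})$. Since $\mathbb{T}^{n+m}$ is compact, every closed subset $E\subseteq\mathbb{T}^{n+m}$ is automatically compact, so the hypotheses of both theorems concerning $E$ are met.

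The implication ``$q^{-1}(E)$ is a $M_{0}$-set $\Rightarrow$ $E$ is a $M_{0}$-set'' then follows by a direct application of Theorem \ref{quotientM0} with the above data. For the converse, suppose first that $E$ is a \emph{proper} closed (hence compact) subset of $\mathbb{T}^{n+m}$; then $G/H$ is abelian and second countable, $H$ has property $(|l|^{2})$, and $E$ is a proper compact set, so Theorem \ref{quotientMO(2)} applies and yields that $q^{-1}(E)$ is a $M_{0}$-set.

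The only case left uncovered by this direct appeal --- and the one point requiring a small separate argument, although it is entirely routine --- is $E=\mathbb{T}^{n+m}$ itself, which is excluded by the properness restriction underlying property $(|l|^{2})$. In that case $q^{-1}(E)=\mathbb{R}^{n}\times\mathbb{T}^{m}=G$, and I would observe directly that $G$ is a $M_{0}$-set: choosing any non-zero $f\in C_{c}(G)\subseteq L^{1}(G)$, the associated finite measure $f\,dx$ lies in $M(G)=M(q^{-1}(E))$, while its image $\lambda(f)\in C_{r}^{\ast}(G)$ is non-zero since the left regular representation is faithful on $L^{1}(G)$; hence $\lambda(M(q^{-1}(E)))\cap C_{r}^{\ast}(G)\neq\lbrace 0\rbrace$. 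This settles the remaining implication and completes the proof. Since the whole statement is a corollary of the two quotient theorems, I do not expect a genuine obstacle here; the only thing to be careful about is precisely this degenerate case, where property $(|l|^{2})$ is not available and one must supply the Rajchman measure on $G$ by hand.
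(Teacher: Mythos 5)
Your proposal is correct and takes essentially the same route as the paper, whose proof is precisely the one-line combination of Theorem \ref{quotientM0} and Theorem \ref{quotientMO(2)} with $G=\mathbb{R}^{n}\times\mathbb{T}^{m}$, $H=\mathbb{Z}^{n}\times\lbrace 1\rbrace^{m}$ and Remark \ref{exemplo}. Your separate, elementary treatment of the degenerate case $E=\mathbb{T}^{n+m}$ (where property $(|l|^{2})$ only supplies $\theta$ for \emph{proper} compact subsets of $G/H$) is a valid extra precaution that the paper's terse proof passes over in silence.
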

\begin{proof} The result follows from Theorem \ref{quotientM0} and Theorem \ref{quotientMO(2)}. \end{proof}

\noindent We finish the section with results concerning the preservation of operator $M_0$-sets and $M_0$-sets under finite direct products. It will be useful to consider the left and right slice maps. Given $\omega\in(\mathcal{K}(H_2,K_2))^{\ast}=\mathcal{C}_{1}(K_2,H_2)$, the left slice map $L_{\omega}:\mathcal{K}(H_1\otimes H_2, K_1\otimes K_2)\rightarrow\mathcal{K}(H_1,K_1)$ is defined on elementary tensors by : $$L_{\omega}(A\otimes B)=\omega(B)A$$ A useful property of $L_{\omega}$ is that if $T\in\mathcal{K}(H_1\otimes H_2,K_1\otimes K_2)$ is supported on $\rho(\kappa_1\times\kappa_2)$ then, $\text{supp}(L_{\omega}(T))\subseteq\kappa_{1}$ (cf. \cite{2}). The right slice operator $R_{\omega}:\mathcal{K}(H_1\otimes H_2,K_1\otimes K_2)\rightarrow\mathcal{K}(H_2,K_2)$ is defined analogously.

\begin{thm}\label{preservaprod} Let $(X_i,\nu_i),(Y_i,\mu_i)$ be standard measure spaces and $\kappa_{i}\subseteq X_{i}\times Y_{i}$ be $\omega$-closed sets, for $i=1,2$. The set $\rho(\kappa_{1}\times\kappa_{2})$ is an operator $M_{0}$-set if and only if both $\kappa_{1}$ and $\kappa_{2}$ are operator $M_{0}$-sets.
\end{thm}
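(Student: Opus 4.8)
The plan is to transport witnesses of the operator $M_{0}$ property — nonzero Arveson measures $\sigma\in\mathbb{A}(\hat{\kappa})$ with $T_{\sigma}$ compact — between $\rho(\kappa_{1}\times\kappa_{2})$ and the factors $\kappa_{i}$, in one direction by forming a product of measures and in the other by slicing a measure through the maps $L_{\omega},R_{\omega}$, tracking supports throughout by means of Theorem \ref{arvesonop}. This mirrors the scheme of Theorem \ref{teorema2}, now with Arveson measures in place of compact operators in the relevant maximal masa-bimodule. We use freely that a nonzero Arveson measure $\sigma$ has $T_{\sigma}\neq0$, since $\sigma$ is recovered from $T_{\sigma}$ via $(T_{\sigma}\chi_{A},\chi_{B})=\sigma(B\times A)$ for rectangles $B\times A$ of finite measure.

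\emph{Suppose $\kappa_{1}$ and $\kappa_{2}$ are operator $M_{0}$-sets}, witnessed by nonzero $\sigma_{i}\in\mathbb{A}(\hat{\kappa}_{i})$ with $T_{\sigma_{i}}$ compact. Let $\sigma$ be the push-forward of the product measure $\sigma_{1}\times\sigma_{2}$ under the coordinate rearrangement $(Y_{1}\times X_{1})\times(Y_{2}\times X_{2})\to(Y_{1}\times Y_{2})\times(X_{1}\times X_{2})$; this rearrangement carries $\hat{\kappa}_{1}\times\hat{\kappa}_{2}$ onto $\widehat{\rho(\kappa_{1}\times\kappa_{2})}$, so $\mathrm{supp}(\sigma)\subseteq\widehat{\rho(\kappa_{1}\times\kappa_{2})}$. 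Since the variation of a product measure is the product of the variations and the marginal of a product is the product of the marginals, the marginals of $|\sigma|$ onto $Y_{1}\times Y_{2}$ and onto $X_{1}\times X_{2}$ are dominated by multiples of the respective product measures, with $\|\sigma\|_{\mathbb{A}}\leq\|\sigma_{1}\|_{\mathbb{A}}\|\sigma_{2}\|_{\mathbb{A}}$; moreover $\sigma\neq0$. Under the identification of $L^{2}$ of a product space with the tensor product of the $L^{2}$ spaces, evaluating $(T_{\sigma}(f_{1}\otimes f_{2}),g_{1}\otimes g_{2})$ on elementary tensors shows $T_{\sigma}=T_{\sigma_{1}}\otimes T_{\sigma_{2}}$, which is compact. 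Hence $\sigma$ witnesses that $\rho(\kappa_{1}\times\kappa_{2})$ is an operator $M_{0}$-set.

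\emph{Conversely, suppose $\rho(\kappa_{1}\times\kappa_{2})$ is an operator $M_{0}$-set}, with witness $\sigma$; reorganising coordinates, view $\sigma$ as a finite complex measure on $Y_{1}\times X_{1}\times Y_{2}\times X_{2}$ supported on $\hat{\kappa}_{1}\times\hat{\kappa}_{2}$, and recall $T_{\sigma}\neq0$. Choose $f_{1}\in L^{2}(X_{1}),g_{1}\in L^{2}(Y_{1}),g\in L^{2}(X_{2}),h\in L^{2}(Y_{2})$ with $(T_{\sigma}(f_{1}\otimes g),g_{1}\otimes h)\neq0$, and — using density — take $g$ and $h$ \emph{bounded} and supported, respectively, on sets $A_{2}\subseteq X_{2}$ and $B_{2}\subseteq Y_{2}$ of finite measure. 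Put $b(y_{2},x_{2})=g(x_{2})\overline{h(y_{2})}$, let $\omega$ be the rank-one nuclear operator with $\omega(B)=(Bg,h)$, and define a complex measure on $Y_{1}\times X_{1}$ by $\sigma_{1}(C)=\int_{C\times(Y_{2}\times X_{2})}b\,d\sigma$. Because $b$ is bounded and concentrated on $B_{2}\times A_{2}$, for $A\subseteq Y_{1}$ one has $|\sigma_{1}|_{Y_{1}}(A)\leq\|b\|_{\infty}\,|\sigma|_{Y_{1}\times Y_{2}}(A\times B_{2})\leq\|b\|_{\infty}\|\sigma\|_{\mathbb{A}}\,\mu_{2}(B_{2})\,\mu_{1}(A)$, and symmetrically the marginal of $|\sigma_{1}|$ onto $X_{1}$ is dominated by $\|b\|_{\infty}\|\sigma\|_{\mathbb{A}}\nu_{2}(A_{2})\,\nu_{1}$; since $\mu_{2}(B_{2})$ and $\nu_{2}(A_{2})$ are finite, $\sigma_{1}$ is an Arveson measure on $Y_{1}\times X_{1}$. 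Evaluating on elementary tensors gives $T_{\sigma_{1}}=L_{\omega}(T_{\sigma})$, which is compact (a slice of the compact operator $T_{\sigma}$) and supported on $\kappa_{1}$ by the stated support property of $L_{\omega}$; hence $\mathrm{supp}(\sigma_{1})\subseteq\hat{\kappa}_{1}$ by Theorem \ref{arvesonop}, so $\sigma_{1}\in\mathbb{A}(\hat{\kappa}_{1})$, and $(T_{\sigma_{1}}f_{1},g_{1})=(T_{\sigma}(f_{1}\otimes g),g_{1}\otimes h)\neq0$, so $\sigma_{1}\neq0$. Thus $\sigma_{1}$ witnesses that $\kappa_{1}$ is an operator $M_{0}$-set; the same argument with the right slice maps $R_{\omega}$ shows $\kappa_{2}$ is an operator $M_{0}$-set.

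\emph{The main obstacle} is the converse direction: a compact operator supported on $\kappa_{1}$ need not be pseudointegral, so one must genuinely exhibit the measure $\sigma_{1}$, and the naive slice of an Arveson measure by a bounded function need not be Arveson, since the marginals of an Arveson measure on a product are only absolutely continuous — not boundedly so — with respect to the factor measures when the latter are infinite. The remedy above is to slice using test functions supported on sets of finite measure, which is available by density once $T_{\sigma}\neq0$; this forces the factor marginals of the sliced measure to be boundedly absolutely continuous. Verifying that $L_{\omega}(T_{\sigma})$ is exactly $T_{\sigma_{1}}$, inheriting compactness and the support constraint via the slice map and Theorem \ref{arvesonop}, together with the coordinate bookkeeping around $\rho$, the operation $\kappa\mapsto\hat{\kappa}$, and the $L^{2}$-tensor identification, is then routine.
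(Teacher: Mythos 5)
Your argument is correct, and its overall architecture coincides with the paper's: the forward direction via the product measure $\sigma_{1}\times\sigma_{2}$ (with $T_{\sigma_{1}\otimes\sigma_{2}}=T_{\sigma_{1}}\otimes T_{\sigma_{2}}$ compact and support carried onto $\widehat{\rho(\kappa_{1}\times\kappa_{2})}$ by the coordinate rearrangement), and the converse by slicing $T_{\sigma}$ with a rank-one functional $\omega$ and exhibiting the slice explicitly as $T_{\sigma_{1}}$ for a measure $\sigma_{1}$ obtained by integrating a bounded kernel $b$ in the second pair of variables, then invoking the support property of $L_{\omega}$ together with Theorem \ref{arvesonop}. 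Where you genuinely diverge is in how the nonvanishing of the sliced measure is secured. The paper reduces to a finite, non-negative (hence Radon) $\sigma$, extracts a compact $K$ with $\sigma(K)>0$, and takes for $b$ a tensor of Urysohn functions equal to $1$ on the relevant projections of $K$, so that $\gamma$ of a suitable rectangle is at least $\sigma(K)>0$; this makes positivity do the work, at the cost of the reduction to $|\sigma|$ (which needs some care, since the compactness hypothesis is attached to the original $\sigma$, not to $|\sigma|$). You instead observe that $\sigma\mapsto T_{\sigma}$ is injective on Arveson measures (via $(T_{\sigma}\chi_{A},\chi_{B})=\sigma(B\times A)$ on finite-measure rectangles, which suffices by $\sigma$-finiteness), so $T_{\sigma}\neq 0$, and then choose bounded, finite-measure-supported $g,h$ by density so that the matrix coefficient $(T_{\sigma}(f_{1}\otimes g),g_{1}\otimes h)$ is nonzero; the same boundedness and finiteness of $\nu_{2}(A_{2}),\mu_{2}(B_{2})$ give the Arveson bounds for $\sigma_{1}$, exactly as the Urysohn functions do in the paper. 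Your variant buys a cleaner treatment of complex $\sigma$ (no passage to $|\sigma|$, no Radon/compact-set extraction) and directly certifies $T_{\sigma_{1}}\neq0$, hence $\sigma_{1}\neq0$; the paper's variant buys an explicit positive lower bound on $\gamma$ of a concrete rectangle without needing the injectivity of $\sigma\mapsto T_{\sigma}$. Both establish the statement; just make sure, as you indicate, to justify the change-of-measure identity $\int F\,d\sigma_{1}=\int F\cdot b\,d\sigma$ for $F=f_{1}\otimes\overline{g_{1}}$ with $f_{1},g_{1}$ merely square-integrable (it follows from the Arveson marginal bounds, the same estimate that makes the sesquilinear form in Theorem \ref{arvesonop} finite), which is the only point where your write-up leans on ``routine.''
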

\begin{proof} Let $\kappa_{1}$ and $\kappa_{2}$ be operator $M_{0}$-sets, so that there are non zero measures $\sigma_{1}\in\mathbb{A}(\hat{\kappa_{1}})$ and $\sigma_{2}\in\mathbb{A}(\hat{\kappa_{2}})$ such that $T_{\sigma_{1}}\in\mathcal{K}(H_1,K_1)$ and $T_{\sigma_{2}}\in\mathcal{K}(H_2,K_2)$. Following the proof of Theorem 3.8 in \cite{2}, we conclude that $T_{\sigma}:=T_{\sigma_{1}}\otimes T_{\sigma_{2}}\in\mathcal{K}(L^{2}(X_{1}\times X_{2}),L^{2}(Y_{1}\times Y_{2}))$ for a non zero Arveson measure $\sigma$ supported in $\widehat{\rho(\kappa_{1}\times\kappa_{2})}$. Hence, $\rho(\kappa_{1}\times\kappa_{2})$ is an operator $M_{0}$-set.\\ \\ Conversely, suppose that $\rho(\kappa_{1}\times\kappa_{2})$ is an operator $M_{0}$-set so that there is a non zero measure $\sigma\in\mathbb{A}(\widehat{\rho(\kappa_{1}\times\kappa_{2})})$ and $T_{\sigma}\in\mathcal{K}(H_{1}\otimes H_2,K_{1}\otimes K_{2})$. Note that $L_{\omega}(T_{\sigma})\in\mathcal{K}(H_{1},K_{1})$ and supp$(L_{\omega}(T_{\sigma}))\subseteq\kappa_{1}$. Thus, it suffices to prove that for some $\omega$, $L_{\omega}(T_{\sigma})$ is a pseudo-integral operator - say $T_{\gamma}$ - for a non-zero Arveson measure $\gamma$. It will follow from Theorem \ref{arvesonop} that $\gamma\in\mathbb{A}(\hat{\kappa_{1}})$ and thus, $\kappa_{1}$ is an operator $M_{0}$-set. The case for $\kappa_{2}$ is entirely analogous, using the right slice operator $R_{\omega}$ instead. \\ We let $\Omega = Y_1\times Y_2\times X_1\times X_2$, $\pi$, $\pi_{X_i}$, $\pi_{Y_i}$ be respectively the projections of $\Omega$ onto $Y_{1}\times X_1$, $X_i$ and $Y_i$ and $f_{i}\in L^{2}(X_i,\nu_i)$, $g_{i}\in L^{2}(Y_i,\mu_{i})$, for $i=1,2$ with $\omega=f_{2}\otimes g_{2}$. \\ Note that if $\sigma$ is Arveson (in particular with finite total variation), then so is $|\sigma|$. Consequently, we can assume that $\sigma\in\mathbb{A}(\widehat{\rho(\kappa_1\times \kappa_2)})$ is finite, non-zero and non-negative and as $\Omega$ is completely metrizable and separable, it follows that $\sigma$ is Radon.\footnote{Let $X$ be a completely metrizable and separable space and $\mu$ a (positive) Borel measure on $X$. Then, $X$ is Radon (cf. \cite{19}, Theorem 7.1.7). In fact, if $w(X)$ is the weight of $X$, then there is a non-tight finite (positive) Borel measure on $X$ if and only if $w(X)$ is a measurable cardinal (cf. \cite{20}, Theorem 438H). Existence of measurable cardinals is not provable within ZFC. } Since $\sigma\neq 0$ and is Radon, there is a compact $K\subseteq\Omega$ with $\sigma(K)>0$. Note that $\pi_{X_2}(K):=C\subseteq X_2$ and $\pi_{Y_2}(K):=C'\subseteq Y_2$ are compact sets and let $\mathcal{U},\mathcal{V}$ be open sets such that $C\subseteq\mathcal{U}$ and $C'\subseteq\mathcal{V}$. Since $X_2$ and $Y_2$ are normal, there are continuous functions $f_{2}:X_{2}\rightarrow [0,1]$ and $g_{2}:Y_2\rightarrow [0,1]$ such that $\chi_{C}\leq f_2\leq \chi_{\mathcal{U}}$ and $\chi_{C'}\leq g_2\leq \chi_{\mathcal{V}}$. Since $\nu_2$ is Radon, then : $$\inf\lbrace\nu_{2}(\mathcal{W}):C\subseteq\mathcal{W}\ \text{and $\mathcal{W}$ is open}\rbrace = \nu_{2}(C)<\infty$$ and we can choose $\mathcal{V}$ such that $\nu_{2}(\mathcal{V})<\infty$. Similarly, we can choose $\mathcal{W}$ such that $\mu_{2}(\mathcal{W})<\infty$. Consequently : $$\int_{X_2}f_{2}^{2}(x_2)d\nu_{2}\leq\int_{X_{2}}\chi_{\mathcal{U}}d\nu_{2}=\nu_{2}(\mathcal{U})<\infty$$ and we conclude that $f_{2}\in L^{2}(X_2,\nu_2)$. Similarly, we conclude that $g_{2}\in L^{2}(Y_2,\mu_2)$. This is our choice of $f_2$ and $g_2$, defining $\omega$. Now note that : $$(L_{\omega}(T_{\sigma})f_{1},\overline{g_1})=\langle L_{\omega}(T_{\sigma}),f_{1}\otimes g_{1}\rangle = \langle T_{\sigma},(f_{1}\otimes g_{1})\otimes\omega\rangle = (T_{\sigma}(f_{1}\otimes f_{2}),\overline{g_{1}\otimes g_{2}})$$ Since $\sigma$ is an Arveson measure on $\Omega$ we have : $$(L_{\omega}(T_{\sigma})f_1,g_1)=\int_{\Omega}f_{1}(x_1)f_2(x_2)\overline{g_1(y_1)}\overline{g_2(y_2)}d\sigma((y_1,y_2),(x_1,x_2))$$ Letting $y=(y_1,y_2)$ and $x=(x_1,x_2)$, we define a measure $\gamma$ on $Y_{1}\times X_{1}$ as follows : $$\gamma(E):=\int_{\Omega}\chi_{\pi^{-1}(E)}((y,x))f_{2}(x_2)\overline{g_{2}(y_2)}d\sigma(y,x)$$ Therefore, we may conclude that : $$(L_{\omega}(T_{\sigma})f_1,g_1)=\int_{Y_{1}\times X_{1}}f_{1}(x_1)\overline{g_1(y_1)}d\gamma(y_1,x_1)$$ If one proves that $\gamma$ an Arveson measure, we may conclude that $L_{\omega}(T_{\sigma})$ is pseudo-integral. It will then be enough to check that $\gamma$ is non-zero and we are done. On one hand : $$\gamma(Y_1\times X_1)=\int_{\Omega}\chi_{\Omega}(y,x)f_{2}(x_2)g_{2}(y_2)d\sigma(y,x)\leq\int_{\Omega}\chi_{\Omega}(y,x)d\sigma <\infty$$ and thus, $\gamma$ has finite total variation. On the other hand, let $E=\alpha\times X_1$, with a measurable set $\alpha\subseteq Y_1$. Then, we have that : \begin{align*}|\gamma|(\alpha\times X_1)&\leq\int_{\Omega}\chi_{\alpha\times Y_{2}\times X_{1}\times X_2}((y,x))f_2(y_2)g_2(x_2)d|\sigma|((y,x))\\&=\int_{\Omega}\chi_{\alpha}(y_1)\chi_{Y_2\times X_{1}\times X_2}(y_2,x_1,x_2)f_{2}(y_2)g_{2}(x_2)d|\sigma|((y,x))\\&\leq (\int_{\Omega}f_{2}^{2}g_{2}^{2}d|\sigma|)^{\frac{1}{2}}(\int_{\Omega}\chi_{\alpha}\chi_{Y_{2}\times X_{1}\times X_{2}}d|\sigma|)^{\frac{1}{2}}:=A^{\frac{1}{2}}B^{\frac{1}{2}}\\&\leq k\nu_{1}(\alpha), \text{for some finite positive $k$. In fact :}\end{align*} Since $\sigma$ is Arveson :  $$A^{\frac{1}{2}}\leq c (\int_{X_2} f_{2}^2(x_2)d\nu_2)^{\frac{1}{2}}(\int_{Y_2}g_{2}^{2}(y_2)d\mu_{2})^{\frac{1}{2}}=c||f_2||||g_2||<\infty$$ Furthermore, note that given measurable spaces $(X_i,\nu_i)$ and $(Y_i,\mu_i)$, a measure $\sigma$ on $(X_1\times X_2)\times(Y_1\times Y_2)$ can be identified with a measure $\tilde{\sigma}$ on $X_{1}\times(X_{2}\times Y_{1}\times Y_{2})$ as the product $\sigma$-algebras can be canonically identified. In this way, the marginal measures $|\tilde{\sigma}|(\alpha)$ and $|\sigma|(\alpha\times X_2)$ - for $\alpha\subseteq X_1$ measurable - coincide. If $\sigma$ is Arveson, it then follows that there is some positive finite constant $d$ such that $|\tilde{\sigma}(\alpha)|\leq d a_{1}^{2}(\alpha)$. Thus, $B^{\frac{1}{2}}\leq (d\nu_{1}^{2}(\alpha))^{\frac{1}{2}}$.\\ It remains to check that $\gamma$ is non-zero : let $E=\pi_{Y_1}(K)\times\pi_{X_1}(K)$ which is compact, hence closed and thus Borel subset of $Y_1\times X_1$. Then : $$\gamma(E)=\int_{\Omega}\chi_{\pi^{-1}(E)}(y,x)f_{2}(x_2)g_{2}(y_2)d\sigma(y,x)\geq \sigma(K)>0$$ since if $(y_1,y_2,x_1,x_2)\in K$, then $\chi_{\pi^{-1}(E)}=1$ and $f_2(x_2)=g_2(y_2)=1$, by construction.\end{proof}

\noindent We can thus establish the corresponding fact for $M_0$-sets :

\begin{corol}\label{prodM0} Let $G_1$ and $G_2$ be locally compact second countable groups and $E_{1}\subseteq G_{1}$ and $E_{2}\subseteq G_{2}$ be closed sets. Then : \begin{enumerate}[(i)]\item{If $E_1$ and $E_2$ are sets of restricted multiplicity, then $E_{1}\times E_{2}$ is a set of restricted multiplicity.}\item{If $E_1$ and $E_2$ are sets of extended uniqueness, then $E_{1}\times E_{2}$ is a set of extended uniqueness.}\end{enumerate}\end{corol}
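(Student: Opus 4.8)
The plan is to mimic the proof of Corollary \ref{prod}, simply replacing the ingredients about operator $M$-sets by their $M_0$-counterparts. The two facts I will lean on are Theorem \ref{operatorM0versusM0} — for a locally compact second countable group $G$ and a closed set $E\subseteq G$, $E$ is an $M_0$-set if and only if $E^{\ast}$ is an operator $M_0$-set — and Theorem \ref{preservaprod}, which is the corresponding product statement on the operator side: $\rho(\kappa_1\times\kappa_2)$ is an operator $M_0$-set if and only if both $\kappa_1$ and $\kappa_2$ are. The bridge between the two sides is the identity $\rho(E_1^{\ast}\times E_2^{\ast})=(E_1\times E_2)^{\ast}$ recorded at the start of Section 3.2.1.

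First I would note that the hypotheses are in force: $G_1\times G_2$ is again locally compact and second countable, $E_1\times E_2$ is closed (so that $(E_1\times E_2)^{\ast}$ is $\omega$-closed), and taking $X_i=Y_i=G_i$ with Haar measure puts us in the standard measure space setting required by Theorem \ref{preservaprod}. For (i), assume $E_1$ and $E_2$ are sets of restricted multiplicity. By Theorem \ref{operatorM0versusM0}, $E_1^{\ast}$ and $E_2^{\ast}$ are operator $M_0$-sets, so by Theorem \ref{preservaprod} the set $(E_1\times E_2)^{\ast}=\rho(E_1^{\ast}\times E_2^{\ast})$ is an operator $M_0$-set, and a second application of Theorem \ref{operatorM0versusM0} shows $E_1\times E_2$ is a set of restricted multiplicity.

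For (ii), I would argue by contraposition on the operator side. If, say, $E_1$ is a set of extended uniqueness, then $E_1^{\ast}$ is an operator $U_0$-set; were $\rho(E_1^{\ast}\times E_2^{\ast})$ an operator $M_0$-set, Theorem \ref{preservaprod} would force $E_1^{\ast}$ to be an operator $M_0$-set, which is a contradiction. Hence $(E_1\times E_2)^{\ast}=\rho(E_1^{\ast}\times E_2^{\ast})$ is an operator $U_0$-set, and Theorem \ref{operatorM0versusM0} gives that $E_1\times E_2$ is a set of extended uniqueness (in fact this shows $E_1\times E_2$ is a set of extended uniqueness as soon as one of $E_1,E_2$ is). I do not expect a genuine obstacle: all the analytic work has already been done in Theorem \ref{preservaprod}, and what remains is the same bookkeeping as in Corollary \ref{prod}; the only points deserving a line of care are checking that $E\mapsto E^{\ast}$ lands among the $\omega$-closed subsets of $G_1\times G_2$ and that the relevant products of Haar measure spaces are standard, so that Theorems \ref{operatorM0versusM0} and \ref{preservaprod} genuinely apply.
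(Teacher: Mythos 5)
Your proposal is correct and takes essentially the same route as the paper: the paper deduces (i) from Theorem \ref{operatorM0versusM0} and Theorem \ref{preservaprod} via the identity $\rho(E_{1}^{\ast}\times E_{2}^{\ast})=(E_{1}\times E_{2})^{\ast}$, and dismisses (ii) as ``entirely analogous'', which is precisely your contrapositive argument on the operator side. Your write-up of (ii) additionally records the slightly stronger fact, already implicit in the if-and-only-if of Theorem \ref{preservaprod}, that $E_{1}\times E_{2}$ is a set of extended uniqueness as soon as one of the factors is.
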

\begin{proof} Let $E_1$ and $E_2$ be $M_0$-sets. By Theorem \ref{operatorM0versusM0}, $E_{1}^{\ast}$ and $E_{2}^{\ast}$ are operator $M_{0}$-sets. Since $\rho(E_{1}^{\ast}\times E_{2}^{\ast})=(E_{1}\times E_{2})^{\ast}$ it follows by Theorem \ref{preservaprod} and Theorem \ref{operatorM0versusM0} that $E_{1}\times E_{2}$ is a $M_0$-set. The proof is entirely analogous for the case of sets of extended uniqueness.\end{proof}

\section{Complexity of sets of uniqueness and extended uniqueness}

\noindent In section 4.1 we follow \cite{3} and establish the $\Pi_{1}^{1}$-hardness of $\mathcal{U}(\mathbb{T})$ and $\mathcal{U}_{0}(\mathbb{T})$. In section 4.2, we prove the coanalycity of the collection of closed sets of uniqueness and of the collection of closed sets of extended uniqueness (with respect to the Effros Borel space) in greater generality. Combining these results with the $\Pi_{1}^{1}$-hardness of $\mathcal{U}(\mathbb{T})$ (and $\mathcal{U}_{0}(\mathbb{T})$) and using the preservation properties proven in section 3.2, we locate the complexity of the following sets : \\ \\ $\bullet$ $\mathcal{U}(G)$ and $\mathcal{U}_{0}(G)$ are coanalytic if $G$ is locally compact, Hausdorff and second countable. \\ \\$\bullet$ $\mathcal{U}(G)$ is $\Pi_{1}^{1}$-complete for any connected and locally compact Lie group $G$. \\ \\$\bullet$ $\mathcal{U}_{0}(G)$ is $\Pi_{1}^{1}$-complete if $G$ is a connected abelian Lie group. 

\subsection{The case of $\mathbb{T}$}

\noindent In order to prove the $\Pi_{1}^{1}$-hardness of $\mathcal{U}(\mathbb{T})$ we use the Salem-Zygmund Theorem, which is a remarkable fact relating sets of uniqueness of the circle with Pisot numbers. \\ \\ Consider an interval $[a,b]$ with length $l=b-a$ and a sequence of positive real parameters : $$0=\eta_{0}<\eta_{1}<...<\eta_{k}<\eta_{k+1}=1$$ such that for $i<k$, $\xi:=1-\eta_{k}<\eta_{i+1}-\eta_{i}$. Moreover, let $E$ be the following disjoint union of closed sets : $$\bigcup_{i=0}^{k}[a+l\eta_{i},a+l\eta_{i}+l\xi]$$ We say that $E$ was obtained from $[a,b]$ by a dissection of type $(\xi,\eta_{1},...,\eta_{k})$.\\ We now start from $[a,b]=[0,2\pi]$ and apply iteratively this operation of dissection. As result, we have a decreasing sequence of closed sets $E_{1}\supseteq ...\supseteq E_{n}\supseteq E_{n+1}\supseteq ...$ and define : $$E(\xi,\eta_{1},...,\eta_{k})=\bigcap_{n\in\omega}E_{n}$$ It is clear that $E$ is a non-empty perfect set. For instance, the usual Cantor set in this notation is simply the set $E(\frac{1}{3},\frac{2}{3})$. The Salem-Zygmund Theorem gives us a full characterization of these type of sets in terms of sets of uniqueness : 
\begin{thm}\label{salem}$E(\xi,\eta_{1},...,\eta_{k})$ is a set of uniqueness if and only if $\frac{1}{\xi}$ is a Pisot number\footnote{An algebraic integer $x$ is called a Pisot number if $x>1$ and all its conjugates have absolute value smaller than 1.} and each $\eta_{i}\in\mathbb{Q}(\frac{1}{\xi})$.\end{thm}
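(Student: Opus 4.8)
The plan is to treat the two implications separately, in each case exploiting the self-similar structure of $E:=E(\xi,\eta_{1},\ldots,\eta_{k})$. Writing $\eta_{0}=0$, $\theta:=\tfrac{1}{\xi}$ and $T_{i}(x)=\xi x+2\pi\eta_{i}$ for $0\le i\le k$, the iterated dissection shows that $E=\bigcup_{i=0}^{k}T_{i}(E)$ is a disjoint union and that $x\in E$ precisely when $\tfrac{x}{2\pi}=\sum_{j\ge1}\eta_{\epsilon_{j}}\xi^{\,j-1}$ for some $(\epsilon_{j})_{j}\in\{0,\ldots,k\}^{\omega}$; establishing this coding would be the first step. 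By Remark \ref{remarkk} it then suffices to reason with pseudofunctions supported on $E$: one has $E\in\mathcal{U}(\mathbb{T})$ if and only if $PM(E)\cap PF(\mathbb{T})=\{0\}$, and in order to place $E$ in $\mathcal{M}(\mathbb{T})$ (even in $\mathcal{M}_{0}(\mathbb{T})$) it is enough to exhibit one non-zero Rajchman measure carried by $E$, since every such measure lies in $M(E)\cap PF(\mathbb{T})$.

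For necessity I would argue by contraposition: assuming that $\theta$ is not a Pisot number, or that $\theta$ is Pisot but some $\eta_{i}\notin\mathbb{Q}(\theta)$, I would produce a non-zero Rajchman measure on $E$. Take $\mu$ to be the push-forward, under the coding map $(\epsilon_{j})_{j}\mapsto 2\pi\sum_{j}\eta_{\epsilon_{j}}\xi^{\,j-1}$, of a suitable Bernoulli measure $\bigotimes_{j}(\sum_{i}p_{i}\delta_{i})$ with all $p_{i}>0$; then $\mu$ is a non-zero probability measure supported on $E$ with Fourier--Stieltjes transform $\hat{\mu}(t)=\prod_{j\ge1}P(\xi^{\,j-1}t)$, where $P(t)=\sum_{i=0}^{k}p_{i}e^{2\pi i\eta_{i}t}$. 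The key point is that $\hat{\mu}(n)\to0$ as $|n|\to\infty$ unless the partial products $\prod_{j}|P(\xi^{\,j-1}n)|$ stay bounded below along an infinite set of integers, and classical Pisot--Salem theory — an algebraic $\theta>1$ is Pisot exactly when the distance $\|\theta^{n}\|$ of $\theta^{n}$ to the nearest integer tends to $0$, together with $\|\theta^{n}\alpha\|\to0$ for every $\alpha\in\mathbb{Z}[\theta]$ — shows this can occur only when $\theta$ is Pisot and each $\eta_{i}\in\mathbb{Q}(\theta)$. Hence in our situation $\mu$ is Rajchman, so $E\in\mathcal{M}_{0}(\mathbb{T})\subseteq\mathcal{M}(\mathbb{T})$, contradicting $E\in\mathcal{U}(\mathbb{T})$.

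For sufficiency, assume $\theta$ is Pisot and every $\eta_{i}\in\mathbb{Q}(\theta)$, and let $S$ be a pseudofunction supported on $E$; the goal is $S=0$. Pass to the continuous formal second integral $F$ of $S$ (so that $F''=S$ in the sense of distributions): the hypothesis that $S$ is supported on $E$ says exactly that $F$ is affine on each complementary interval of $E$, and if $F$ turned out to be affine on all of $\mathbb{T}$ we would be done, so the task is to exclude a non-affine $F$ (a Cantor-function-type phenomenon). Here I would use $E=\bigsqcup_{i}T_{i}(E)$: decompose $S$ into pieces supported on the branches $T_{i}(E)$ and pull each back through the affine map $T_{i}$, which multiplies frequencies by $\theta$ and shifts them by $2\pi\theta\eta_{i}$. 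Iterating $n$ times leads one to evaluate $\hat S$ at points obtained from integers by multiplication by $\theta^{n}$ and addition of $\mathbb{Z}[\theta]$-linear combinations of the $2\pi\eta_{i}$; after clearing a common denominator $q\ge1$ with $q\eta_{i}\in\mathbb{Z}[\theta]$, the Pisot estimates $\|\theta^{n}\alpha\|\to0$ force all these points to be asymptotically integral, so $\hat S$ there is asymptotically $\hat S$ at integers and hence tends to $0$. Feeding this back into the self-similar recombination makes the branchwise contributions carry vanishing total mass, from which one concludes that $F$ is affine globally, i.e. $S=0$.

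I expect this last step to be by far the hardest: turning the smallness of $\hat S$ at near-integers into genuine vanishing, while controlling the errors coming from $\|\theta^{n}\alpha\|$ and the recombination of the $k+1$ self-similar branches, is the technical core of the Salem--Zygmund theorem, and it is precisely here that both the Pisot hypothesis on $\theta$ and the rationality of the $\eta_{i}$ are genuinely needed; it is also the historical reason this direction is the delicate one. In practice one may simply import this direction wholesale from \cite{3}.
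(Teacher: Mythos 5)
First, a point of comparison: the paper does not prove Theorem \ref{salem} at all --- it is imported from Kechris--Louveau \cite{3} (Theorem III.3.1), with the stronger restricted-multiplicity refinement used afterwards coming from Theorem III.4.4 of the same book. So deferring the substance to \cite{3}, as you do explicitly for one direction, is consistent with the paper's treatment; the question is only how much of what you do sketch would stand on its own, and there the two halves have concrete gaps.

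For the multiplicity direction your outline is the standard one (Bernoulli measure pushed through the coding map, $\hat{\mu}(n)=\prod_{j}P(\xi^{\,j-1}n)$, conclude $E\in\mathcal{M}_{0}(\mathbb{T})$, which indeed matches the remark following Theorem \ref{d=1}), but the step you attribute to ``classical Pisot--Salem theory'' is exactly the theorem to be proved, and you quote the wrong half of that theory: the facts you state (for \emph{algebraic} $\theta$, Pisot is equivalent to $\|\theta^{n}\|\to 0$, and then $\|\theta^{n}\alpha\|\to 0$ for $\alpha\in\mathbb{Z}[\theta]$) are the easy implications, and they are not even applicable, since $\theta=1/\xi$ is an arbitrary real number $>1$ with no algebraicity given. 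What is needed is the hard converse in $\ell^{2}$ form: from $|\hat{\mu}(n_{m})|\ge\delta>0$ along integers $n_{m}\to\infty$ one must extract some $\lambda\neq 0$ with $\sum_{n}\|\lambda\theta^{n}\|^{2}<\infty$ and $\sum_{n}\|\lambda\eta_{i}\theta^{n}\|^{2}<\infty$, and then invoke Pisot's theorem (valid without assuming $\theta$ algebraic) to get $\theta$ Pisot and $\lambda,\lambda\eta_{i}\in\mathbb{Q}(\theta)$, hence $\eta_{i}\in\mathbb{Q}(\theta)$; that extraction-plus-Pisot argument is the content of this direction and is absent from your sketch. For the uniqueness direction, the mechanism you describe does not parse as stated: for a pseudofunction $S$ on $\mathbb{T}$, $\hat{S}$ is defined only on $\mathbb{Z}$, so ``evaluating $\hat{S}$ at $\theta^{n}m$ plus $\mathbb{Z}[\theta]$-combinations'' requires a framework you have not set up, and the actual argument (in \cite{3} and in Salem--Zygmund) runs differently: after clearing a denominator $q$ with $q\eta_{i}\in\mathbb{Z}[\theta]$, the Pisot estimates show that $q\theta^{n}E$ stays, modulo $2\pi$, inside a fixed closed set missing an arc, so $E$ is a set of uniqueness by an $H$-set/Piatetski-Shapiro-type criterion rather than by the formal-second-integral localization you propose. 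Since you yourself import this direction wholesale, the honest summary is that your proposal is a reasonable roadmap of the classical proof, but both directions rest on results not proved (and in one place misstated) in the proposal; citing \cite{3} for the whole theorem, as the paper does, is the appropriate course.
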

\begin{proof} The reader can find a proof in \cite{3} (Theorem III.3.1).\end{proof}

\noindent Another remarkable fact about sets of uniqueness of $\mathbb{T}$ which is needed is Bary's Theorem, which states an important closure property of $\mathcal{U}(\mathcal{K}(\mathbb{T}))$ :
\begin{thm}\label{bary} The union of countably many closed sets of uniqueness of $\mathbb{T}$ is also a set of uniqueness.\end{thm}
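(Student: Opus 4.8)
The plan is to argue by contradiction in the pseudofunction picture. Using that $C_r^\ast(\mathbb{T})\approx c_0(\mathbb{Z})$, $VN(\mathbb{T})\approx\ell^\infty(\mathbb{Z})$ and $A(\mathbb{T})\approx\ell^1(\mathbb{Z})$ (recorded in Section~2.1), Definition~\ref{unique} says that a closed $F\subseteq\mathbb{T}$ is a set of uniqueness exactly when there is no nonzero $S\in C_r^\ast(\mathbb{T})$ with $supp^\ast(S)\subseteq F$. Two standard facts about $A(\mathbb{T})$ are needed. First, regularity: for every compact $C$ and open $V\supseteq C$ there is $u\in A(\mathbb{T})$ with $u\equiv 1$ on $C$ and $\mathrm{supp}(u)\subseteq V$. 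Second, $C_r^\ast(\mathbb{T})$ is an $A(\mathbb{T})$-submodule of $VN(\mathbb{T})$ for the action $u.T$: on the Fourier side this action is, up to a reflection, convolution of $\hat u\in\ell^1(\mathbb{Z})$ against the coefficient sequence of $T$, and $\ell^1\ast c_0\subseteq c_0$, so $u.T\in C_r^\ast(\mathbb{T})$ whenever $T\in C_r^\ast(\mathbb{T})$. Combined with Lemma~\ref{propsup}(ii) these yield two localisation statements: $supp^\ast(u.T)\subseteq\mathrm{supp}(u)\cap supp^\ast(T)$, and, if $u.T=0$ for every $u\in A(\mathbb{T})$ supported in a fixed open $W$, then $supp^\ast(T)\cap W=\emptyset$ (given $w\in A(\mathbb{T})$ supported in $W$, pick by regularity $u\equiv 1$ on $\mathrm{supp}(w)$ with $\mathrm{supp}(u)\subseteq W$; then $\langle w,T\rangle=\langle uw,T\rangle=\langle u.T,w\rangle=0$, so Lemma~\ref{propsup}(ii) applies).

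These facts settle the case where the union is already closed, which is the one actually used in Section~4.1. Let $E=\bigcup_n E_n$ be closed with each $E_n$ a closed set of uniqueness, and suppose $E\notin\mathcal{U}(\mathbb{T})$, say $S\in C_r^\ast(\mathbb{T})\setminus\{0\}$ with $K:=supp^\ast(S)\subseteq E$; by Lemma~\ref{propsup}(i), $K$ is a nonempty compact subset of $\mathbb{T}$. Then $K=\bigcup_n(K\cap E_n)$ is a countable union of relatively closed sets, so by the Baire category theorem some $K\cap E_{n_0}$ has nonempty relative interior: there is an open $V\subseteq\mathbb{T}$ with $\emptyset\neq V\cap K\subseteq E_{n_0}$. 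For any $u\in A(\mathbb{T})$ with $\mathrm{supp}(u)\subseteq V$ one has $u.S\in C_r^\ast(\mathbb{T})$ and $supp^\ast(u.S)\subseteq\mathrm{supp}(u)\cap K\subseteq V\cap K\subseteq E_{n_0}$, hence $u.S=0$ because $E_{n_0}$ is a set of uniqueness; by the second localisation statement $supp^\ast(S)\cap V=\emptyset$, contradicting $\emptyset\neq V\cap K=V\cap supp^\ast(S)$. Thus $S=0$ and $E\in\mathcal{U}(\mathbb{T})$.

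For the statement in its literal generality, where $\bigcup_n E_n$ need only be $F_\sigma$ --- the prototype being the rationals in $\mathbb{T}$, whose closure is not even null --- the pseudofunction argument no longer suffices, since a trigonometric series converging to $0$ off such a set controls only its distributional support. One then works with Riemann's formally twice-integrated function $\Phi$: since each $E_n$ is null so is $E$, hence the coefficients tend to $0$ by Cantor--Lebesgue, $\Phi$ is continuous, Riemann's first theorem gives vanishing of its second symmetric derivative off $E$, and Riemann's second theorem gives that $\Phi$ is smooth. The set $G$ of points admitting no neighbourhood on which $\Phi$ is affine is then closed and, because smoothness welds two adjoining affine arcs into one, has no isolated points; running the very same Baire category step on the perfect set $G=\bigcup_n(G\cap E_n)$ produces an interval $I$ with $\emptyset\neq I\cap G\subseteq E_{n_0}$, and Riemann's localisation principle --- a closed set of uniqueness can be excised from an interval without affecting affineness of $\Phi$ --- forces $\Phi$ affine on $I$, contradicting $I\cap G\neq\emptyset$; hence $G=\emptyset$ and the series vanishes identically. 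The localisation principle is the genuinely nontrivial ingredient here; as it is classical I would cite \cite{3} for it rather than reprove it, and present the closed-union argument above as the self-contained part. Apart from that principle, the only points needing care are the module property ($u.T\in C_r^\ast(\mathbb{T})$ whenever $T\in C_r^\ast(\mathbb{T})$) and the support bookkeeping for $u.S$; the category step itself is entirely routine.
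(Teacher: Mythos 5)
The paper offers no proof of this theorem at all — it simply cites \cite{3} (Theorem I.5.1) — so your proposal is, by construction, a different route. The part you work out in full, the closed-union case, is correct and self-contained: the module property $A(\mathbb{T})\cdot C_r^{\ast}(\mathbb{T})\subseteq C_r^{\ast}(\mathbb{T})$ (via $\ell^{1}\ast c_{0}\subseteq c_{0}$), the inclusion $supp^{\ast}(u.S)\subseteq \mathrm{supp}(u)\cap supp^{\ast}(S)$, the regularity of $A(\mathbb{T})$, and the Baire category step on $K=supp^{\ast}(S)$ all check out against Definition \ref{unique} and Lemma \ref{propsup}, and you are right that this closed case is the only instance actually invoked in the proof of Theorem \ref{d=1}, where the union $F(K)=\bigcup_{x\in K}f(x)$ is already compact. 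That operator-theoretic argument is a nice complement to the classical reference.

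However, the theorem as stated is Bary's theorem for arbitrary countable unions, and your sketch of that general case has a genuine gap beyond the localisation principle you propose to cite. Writing $G=\bigcup_n(G\cap E_n)$ asserts $G\subseteq E:=\bigcup_n E_n$, i.e.\ that $\Phi$ is affine near every point outside $E$; but at a point $x\notin E$ you only know the pointwise fact that the series converges to $0$ at $x$ (hence the second symmetric derivative of $\Phi$ vanishes there), and since $E$ may be dense — the rationals being the motivating example — no Schwarz-type argument upgrades this to local affineness. Indeed, if $G\subseteq E$ came for free, the countable case of the theorem would follow from perfectness of $G$ alone, with no localisation principle at all, which is not how the classical proof runs. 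Relatedly, even granted a portion $\emptyset\neq V\cap G\subseteq E_{n_0}$, excising $E_{n_0}$ from $V$ requires control of the series at the points of $V\cap(E\setminus E_{n_0})$; there you only know $\Phi$ is locally affine, so you need the further Rajchman formal-multiplication fact that affineness of $\Phi$ on an interval forces the series to converge to $0$ on it (or, as in the classical proof, you must enlarge the closed cover of $G$ by the sets $\lbrace x:\sup_N|S_N(x)|\le M\rbrace$ of uniformly bounded partial sums). So the general case needs substantially more of the machinery of \cite{3} than the single excision statement; the honest options are to present only the closed case (as you do, noting it suffices for Theorem \ref{d=1}) or to cite \cite{3} for the full statement, which is exactly what the paper does.
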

\begin{proof} The reader can find a proof in \cite{3} (Theorem I.5.1).\footnote{Let $\kappa<2^{\aleph_{0}}$ be a cardinal. Under the assumption $MA(\kappa)+\neg CH$ one can prove that in fact the union of $\kappa$ many closed sets of uniqueness of $\mathbb{T}$ is also a set of uniqueness. Since $MA(\aleph_{0})$ holds in ZFC, this includes the case of Theorem \ref{bary}. The reader can find a proof in \cite{10} (Theorem 4.86).}\end{proof}

\noindent We can now establish $\Pi_{1}^{1}$-hardness for $\mathcal{U}(\mathbb{T})$ :

\begin{thm}\label{d=1} The set of closed sets of uniqueness of $\mathbb{T}$ is $\Pi_{1}^{1}(\mathcal{K}(\mathbb{T}))$-hard.\end{thm}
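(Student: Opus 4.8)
The plan is to reduce the known $\Pi_{1}^{1}$-complete set $K(\mathbb{Q}')$ from Theorem \ref{redu} to $\mathcal{U}(\mathbb{T})$ via a continuous map built out of the dissection construction and the Salem--Zygmund characterization. First I would fix a Pisot number whose associated dissection ratio is convenient: taking $\frac{1}{\xi}$ to be, say, the golden ratio (or any fixed Pisot number), Theorem \ref{salem} tells us that a dissection set $E(\xi,\eta_1,\dots,\eta_k)$ is a set of uniqueness precisely when all the $\eta_i$ lie in the number field $\mathbb{Q}(\frac{1}{\xi})$. The idea is then to encode a compact set $K\subseteq [0,1]$ by a dissection-type set whose ``rational parameters'' are forced to come from $K$, so that the resulting set is a set of uniqueness iff $K\subseteq \mathbb{Q}'$ (after intersecting with the fixed field, which for the golden ratio contains $\mathbb{Q}$). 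More precisely, I would build from $K\in\mathcal{K}([0,1])$ a countable family of dissection sets $\{E_K^{(j)}\}_{j}$ indexed so that the parameters $\eta_i$ range over a countable dense-in-$K$ set of points, take their union $F(K)=\overline{\bigcup_j E_K^{(j)}}$ inside $\mathbb{T}=[0,2\pi]$, and check that $K\mapsto F(K)$ is continuous $\mathcal{K}([0,1])\to\mathcal{K}(\mathbb{T})$ with respect to the Vietoris topologies.

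The key steps, in order, are: (1) set up the encoding map $K\mapsto F(K)$ explicitly, using a fixed homeomorphism of $[0,1]$ onto a subinterval of $[0,2\pi]$ and a Borel/continuous selection of a sequence $(q_n(K))_n$ enumerating a set that is dense in $K$ and contained in $K$ (e.g. via a fixed countable basis of $[0,1]$, letting $q_n(K)$ be a chosen point of $K$ in the $n$-th basic open set when nonempty, and a default point otherwise) — here one must take care that the selection is genuinely continuous into $\mathbb{T}^\omega$, which is where the Vietoris/Effros structure is used; (2) verify continuity of $K\mapsto F(K)$: show that $F(K)\subseteq\mathcal{U}$ and $F(K)\cap\mathcal{U}\neq\emptyset$ are open conditions in $K$, which reduces to continuity of each $q_n$ and uniform control of the tails of the union; (3) invoke Salem--Zygmund (Theorem \ref{salem}) to identify each $E_K^{(j)}$ as a set of uniqueness exactly when its parameters lie in $\mathbb{Q}(\frac{1}{\xi})$, hence — with the golden-ratio choice, for which $\mathbb{Q}(\frac{1}{\xi})\cap[0,1]\supseteq\mathbb{Q}'$ and we arrange the parameters to actually lie in $\mathbb{Q}'$ when $K\subseteq\mathbb{Q}'$ — to conclude $E_K^{(j)}\in\mathcal{U}(\mathbb{T})$ for all $j$ iff $K\subseteq\mathbb{Q}'$; (4) apply Bary's Theorem (Theorem \ref{bary}) to pass from ``each $E_K^{(j)}$ is a set of uniqueness'' to ``$\bigcup_j E_K^{(j)}$ is a set of uniqueness'', and check that the closure adds only countably many points (endpoints of the dissection intervals, which are themselves in $\mathbb{Q}(\frac{1}{\xi})$-coordinates and give uniqueness sets) so that $F(K)\in\mathcal{U}(\mathbb{T})$ iff $K\subseteq\mathbb{Q}'$; (5) conclude $F^{-1}(\mathcal{U}(\mathbb{T}))=K(\mathbb{Q}')$, so $\mathcal{U}(\mathbb{T})$ is $\Pi_{1}^{1}$-hard.

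The main obstacle I expect is step (3)–(4): arranging the encoding so that the dissection parameters are \emph{forced} to be exactly the points of $K$ (or a dense subset thereof) — not merely to contain them — while simultaneously keeping the map continuous and keeping all auxiliary parameters (the $\xi$ and the structural $\eta_i$ coming from the dissection, as opposed to the ``data'' points) safely inside $\mathbb{Q}(\frac{1}{\xi})$ so they never spuriously create a multiplicity set. In other words, the delicate point is the bookkeeping that separates the ``rigid'' part of the construction (fixed, always giving uniqueness) from the ``variable'' part (encoding $K$, giving uniqueness iff $K\subseteq\mathbb{Q}'$), together with verifying that taking closures does not destroy this dichotomy. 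A secondary technical point is confirming that the parameter constraint $\xi<\eta_{i+1}-\eta_i$ required by the dissection construction can be met uniformly for all relevant parameter choices — this is handled by choosing $\xi$ small (i.e. the Pisot number large enough, or by subdividing finely), but it must be stated. Once the encoding is pinned down, continuity (step 2) is routine from the explicit formulas, and steps (1), (3), (5) are direct applications of the cited theorems.
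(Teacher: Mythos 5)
Your overall strategy (reduce $K(\mathbb{Q}')$ from Theorem \ref{redu} to $\mathcal{U}(\mathbb{T})$ via Salem--Zygmund-parametrized dissection sets plus Bary's theorem) is the paper's strategy, but two of your concrete steps fail as stated. First, the Pisot number cannot be ``the golden ratio (or any fixed Pisot number)'': Theorem \ref{salem} tests membership of the $\eta_i$ in $\mathbb{Q}(\frac{1}{\xi})$, so unless $\frac{1}{\xi}$ is a rational integer this field strictly contains $\mathbb{Q}$ and the reduction to $K(\mathbb{Q}')$ is not exact --- with the golden ratio, $K=\{x\}$ for an irrational $x\in\mathbb{Q}(\sqrt{5})\cap[0,1]$ would be sent to a set of uniqueness although $K\not\subseteq\mathbb{Q}'$. (The golden ratio is in fact inadmissible outright: the dissection requires $\xi<\eta_{i+1}-\eta_i$, which forces $\xi<\tfrac12$, while $1/\phi\approx 0.618$.) The fix is to take an integer ratio, as the paper does with $f(x)=E(\frac14,\frac38+\frac{x}{9},\frac34)$: then $\mathbb{Q}(\frac1\xi)=\mathbb{Q}$, the dissection constraints hold for every $x\in[0,1]$, and $f(x)\in\mathcal{U}(\mathbb{T})\Leftrightarrow x\in\mathbb{Q}$.

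Second, the encoding through a selected dense sequence $(q_n(K))_n$ and a closure breaks in both directions. Your claimed equivalence ``$E_K^{(j)}\in\mathcal{U}(\mathbb{T})$ for all $j$ iff $K\subseteq\mathbb{Q}'$'' is false in the multiplicity direction: for $K=[0,1]$ the selected dense sequence may consist entirely of rationals, so every $E_K^{(j)}$ is a U-set; to get multiplicity one must use the closure, which by continuity of the parametrization contains the dissection set of \emph{every} $x\in K$ --- in particular of an irrational one --- and then invoke that a closed set containing an M-set is an M-set. This also shows the closure bookkeeping is wrong: the closure adds the entire dissection sets of all limit parameters, not merely countably many interval endpoints; what saves the uniqueness direction is that $K\subseteq\mathbb{Q}'$ forces $K$ to be countable, so the closed set is a countable union of closed U-sets and Theorem \ref{bary} applies. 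Moreover, the proposed selection with a ``default point otherwise'' cannot be continuous (the condition $K\cap U_n\neq\emptyset$ is open but not closed in the Vietoris topology); a Borel selection would still suffice for $\Pi^1_1$-hardness, but none of this machinery is needed: setting $F(K)=\bigcup_{x\in K}f(x)$ directly, as the paper does, yields a compact set, a Vietoris-continuous map $F$, and the two implications above with no selection or closure step.
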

\begin{proof} By Theorem \ref{redu} it is enough to define a continuous map $F:\mathcal{K}([0,1])\rightarrow\mathcal{K}(\mathbb{T})$ such that $\mathcal{K}(\mathbb{Q}')=F^{-1}(\mathcal{U}(\mathbb{T}))$. We start by defining a continuous map $f:[0,1]\rightarrow\mathcal{K}(\mathbb{T})$ as follows : $$x\mapsto E(\frac{1}{4},\frac{3}{8}+\frac{x}{9},\frac{3}{4})$$It follows by Theorem \ref{salem} that $\mathbb{Q}=f^{-1}(\mathcal{U}(\mathcal{K}(\mathbb{T})))$. Now, we define a map $F:\mathcal{K}([0,1])\rightarrow\mathcal{K}(\mathbb{T})$ as follows : $$K\mapsto \bigcup_{x\in K}f(x)$$ By the properties of the Vietoris topology, $F$ is continuous. Moreover, by Theorem \ref{bary}, $F^{-1}(\mathcal{U}(\mathbb{T}))=\mathcal{K}(\mathbb{Q}')$ as we wanted.\end{proof}

\begin{remar} The proof of Theorem \ref{salem} given in \cite{3} actually shows a stronger statement : If $\frac{1}{\xi}$ is not a Pisot number or some $\eta_{i}\notin\mathbb{Q}(\frac{1}{\xi})$, then $E(\xi,\eta_{1},...,\eta_{k})$ is a set of restricted multiplicity (Theorem III.4.4, in \cite{3}). Since sets of uniqueness are also sets of extended uniqueness, the proof of Theorem \ref{d=1} also shows the following :\end{remar}

\begin{thm}\label{hardMO} The set of closed sets of extended uniqueness of $\mathbb{T}$ is $\Pi_{1}^{1}(\mathcal{K}(\mathbb{T}))$-hard.\end{thm}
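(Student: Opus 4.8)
The plan is to mimic the proof of Theorem \ref{d=1} essentially verbatim, using the stronger version of the Salem--Zygmund theorem quoted in the preceding remark together with an analogue of Bary's theorem for sets of extended uniqueness. First I would recall that by Theorem \ref{redu} it suffices to produce a continuous map $F:\mathcal{K}([0,1])\to\mathcal{K}(\mathbb{T})$ with $\mathcal{K}(\mathbb{Q}')=F^{-1}(\mathcal{U}_{0}(\mathbb{T}))$. I would reuse the very same map: define $f:[0,1]\to\mathcal{K}(\mathbb{T})$ by $x\mapsto E(\tfrac14,\tfrac38+\tfrac x9,\tfrac34)$ and then $F(K)=\bigcup_{x\in K}f(x)$, which is continuous for the Vietoris topology exactly as before.

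The two points that need the $M_0$-version rather than the $M$-version are the following. One direction is free: if $K\subseteq\mathbb{Q}'$ then each $f(x)$ is a set of uniqueness by Theorem \ref{salem}, hence in particular a set of extended uniqueness, and then $F(K)$ is a countable union of closed sets of extended uniqueness. Here I would invoke the analogue of Bary's theorem for $\mathcal{U}_{0}(\mathbb{T})$, namely that $\mathcal{U}_{0}(\mathbb{T})$ is closed under countable unions of closed sets (this is classical; it can be cited from \cite{3}, or recovered from the fact that $\mathcal{U}_{0}$-sets are exactly the null sets for all Rajchman measures, an intersection over a fixed family, which is trivially $\sigma$-additive on closed sets). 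Conversely, if $K\not\subseteq\mathbb{Q}'$, pick $x\in K$ with $x\notin\mathbb{Q}$; then $\tfrac14^{-1}=4$ is an integer hence not a Pisot number in the nontrivial sense needed, but more to the point $\eta_{1}=\tfrac38+\tfrac x9\notin\mathbb{Q}=\mathbb{Q}(4)$, so by the stronger Salem--Zygmund statement (Theorem III.4.4 in \cite{3}, quoted in the remark) $f(x)=E(\tfrac14,\tfrac38+\tfrac x9,\tfrac34)$ is a set of \emph{restricted} multiplicity. Since $f(x)\subseteq F(K)$ is closed and a superset of an $M_0$-set is an $M_0$-set, $F(K)\notin\mathcal{U}_{0}(\mathbb{T})$. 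Thus $F^{-1}(\mathcal{U}_{0}(\mathbb{T}))=\mathcal{K}(\mathbb{Q}')$, and $\mathcal{U}_{0}(\mathbb{T})$ is $\Pi_{1}^{1}(\mathcal{K}(\mathbb{T}))$-hard.

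The only genuinely new input compared with Theorem \ref{d=1} is the combination of two facts that are standard in this theory but were not explicitly stated as numbered results: the hereditary property ``a closed superset of an $M_0$-set is an $M_0$-set'' (immediate, since a nonzero Rajchman measure supported in a subset is also supported in the superset), and the $\sigma$-additivity of $\mathcal{U}_{0}(\mathbb{T})$ on closed sets. I expect the main obstacle to be simply citing the latter cleanly; it is the $M_0$-analogue of Bary's Theorem \ref{bary} and is available in \cite{3}, so the plan is to quote it there rather than reprove it. Everything else transfers mechanically from the proof of Theorem \ref{d=1}, so the argument should be short.

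\begin{proof} By Theorem \ref{redu} it suffices to exhibit a continuous map $F:\mathcal{K}([0,1])\to\mathcal{K}(\mathbb{T})$ with $\mathcal{K}(\mathbb{Q}')=F^{-1}(\mathcal{U}_{0}(\mathbb{T}))$. We use the same map as in the proof of Theorem \ref{d=1}: let $f:[0,1]\to\mathcal{K}(\mathbb{T})$ be given by $x\mapsto E(\tfrac14,\tfrac38+\tfrac x9,\tfrac34)$ and set $F(K)=\bigcup_{x\in K}f(x)$, which is continuous for the Vietoris topology. If $K\subseteq\mathbb{Q}'$, then by Theorem \ref{salem} each $f(x)$ is a set of uniqueness, hence a set of extended uniqueness; as the (closed) union of countably many closed sets of extended uniqueness is again a set of extended uniqueness (cf. \cite{3}), $F(K)\in\mathcal{U}_{0}(\mathbb{T})$. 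Conversely, if $K\not\subseteq\mathbb{Q}'$, choose $x\in K$ with $x\notin\mathbb{Q}$; since $\mathbb{Q}(\tfrac1\xi)=\mathbb{Q}(4)=\mathbb{Q}$ for $\xi=\tfrac14$, we have $\tfrac38+\tfrac x9\notin\mathbb{Q}(\tfrac1\xi)$, so by the stronger form of Theorem \ref{salem} recorded in the preceding remark (Theorem III.4.4 in \cite{3}) the closed set $f(x)$ is a set of restricted multiplicity. As $f(x)\subseteq F(K)$ and a closed superset of an $M_0$-set is an $M_0$-set, $F(K)\notin\mathcal{U}_{0}(\mathbb{T})$. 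Hence $F^{-1}(\mathcal{U}_{0}(\mathbb{T}))=\mathcal{K}(\mathbb{Q}')$, and $\mathcal{U}_{0}(\mathbb{T})$ is $\Pi_{1}^{1}(\mathcal{K}(\mathbb{T}))$-hard.\end{proof}
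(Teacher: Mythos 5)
Your proof is correct and follows essentially the same route as the paper: the remark preceding the statement is exactly the observation that the reduction $F$ from Theorem \ref{d=1} also works for $\mathcal{U}_{0}(\mathbb{T})$, since for $K\subseteq\mathbb{Q}'$ one has $F(K)\in\mathcal{U}(\mathbb{T})\subseteq\mathcal{U}_{0}(\mathbb{T})$, while for $K\not\subseteq\mathbb{Q}'$ the stronger Salem--Zygmund statement makes some $f(x)\subseteq F(K)$ a set of restricted multiplicity. The only (harmless) difference is that your appeal to the $\sigma$-additivity of $\mathcal{U}_{0}(\mathbb{T})$ on closed sets is not needed --- the easy direction already follows from Bary's Theorem \ref{bary} for sets of uniqueness together with the inclusion $\mathcal{U}(\mathbb{T})\subseteq\mathcal{U}_{0}(\mathbb{T})$ --- and the aside in your plan that $4$ is ``not a Pisot number'' is inaccurate (it is one), though your actual proof correctly relies only on $\eta_{1}\notin\mathbb{Q}(4)$.
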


\subsection{General case}

\noindent We start this last section by identifying on which Polish spaces we will work on. \\ \\
\noindent $\bullet$ For a locally compact space $X$, we will consider the Fell topology on $\mathcal{F}(X)$, which is compact metrizable, induces the Effros Borel space and coincides with the Vietoris topology whenever $X$ is compact. \\ \\ $\bullet$ Let $X$ be a locally compact, Hausdorff and second countable space. Then, $C_0(X)$ is separable\footnote{Since $X$ is second countable, so it is its one-point compactification $\tilde{X}$. Thus, $C(\tilde{X})$ is separable as $\tilde{X}$ is compact, Hausdorff and second countable. Since subspaces of separable metric spaces are separable, we are done} and since $M(X)=C_{0}(X)^{\ast}$, it follows that $B_{1}(M(G))$ is $w^{\ast}$-metrizable for a locally compact and second countable group $G$ (we adopt the convention that topological groups are Hausdorff). Furthermore, by Banach-Alaoglu Theorem, it follows that : $$G\ \text{locally compact second countable group, then}\ (B_{1}(M(G)),w^{\ast})\ \text{is Polish}$$
$\bullet$ Let $G$ be locally compact and second countable. Any element of $u\in A(G)$ is of the form $u= f\ast\check{g}$ for $f,g\in L^{2}(G)$ (cf. Theorem 2.4.3, in \cite{17}) and with norm $||u||=\inf\lbrace||f||_{2}||g||_{2}\rbrace$ with infimum taken over all representations of the form $u=f\ast\check{g}$. Since $G$ is second countable, then $L^{2}(G)$ is separable and consequently, $A(G)$ is also separable. Thus, and since $VN(G)=A(G)^{\ast}$, $B_{1}(VN(G))$ is $w^{\ast}$-metrizable. Similarly as before, we conclude that : $$G\ \text{locally compact second countable group, then}\ (B_{1}(VN(G)),w^{\ast})\ \text{is Polish}$$ $\bullet$ Let $G$ be locally compact and second countable. It follows that $L^{1}(G)$ is separable. Let $C^{\ast}(G)$ be the completion of $L^{1}(G)$ under the norm $||.||_{C^{\ast}(G)}$. It is well known that $||f||_{C^{\ast}(G)}\leq ||f||_{1}$ and thus, $C^{\ast}(G)$ is separable. Since it is defined as a completion, it follows that $C^{\ast}(G)$ is Polish. \\ \\ $\bullet$ We recall that if $G$ is locally compact, for any $\varphi\in A(G)^{\ast}$ there is an unique operator $T_{\varphi}\in VN(G)$ such that : $$\langle T_{\varphi}(f),g\rangle_{2}=\langle\varphi,\overline{g}\ast \check{f}\rangle =\langle\varphi,\check{f\ast\tilde{g}}\rangle, \ \text{for $f,g\in L^{2}(G)$} $$ The assignment $\varphi\mapsto T_{\varphi}$ is then a surjective linear isometry from $A(G)^{\ast}$ to $VN(G)$ and a homeomorphism if one considers the $w^{\ast}$ topology on both the domain and codomain. Furthermore, if $\varphi\in A(G)^{\ast}$ is given by $\langle\varphi_{\mu},u\rangle =\int_{G}u(s)d\mu(s)$ for some $\mu\in M(G)$, we have that $T_{\varphi_{\mu}}$ actually coincides with $\lambda(\mu)$. For a proof of this collection of facts, the reader is referred to \cite{17} (Theorem 2.3.9). \\ In the remaining of this section, we consider the representation $\lambda$ of $M(G)$ restricted to the unit ball and with the following topologies : $$\lambda : (B_{1}(M(G)),w^{\ast})\rightarrow (VN(G),w^{\ast})$$ It follows from previous observations that $\lambda$ is continuous. Indeed, since $B_{1}(M(G))$ is $w^{\ast}$-metrizable, it is sufficient to verify sequential continuity : if $\mu_{n}\rightarrow\mu$, it follows by definition that $\varphi_{\mu_{n}}\rightarrow\varphi_{\mu}$ and thus, $\lambda(\mu_{n})\rightarrow\lambda(\mu)$. Note that in fact, the definition of the pairing $A(G)^{\ast}=VN(G)$ entails the continuity of $\lambda$ even if one considers the WOT topology in the codomain.

\subsubsection{$M$-sets}

\begin{thm}\label{coanalM} Let $G$ be a locally compact and second countable group. Then, $\mathcal{U}(G)$ is coanalytic.\end{thm}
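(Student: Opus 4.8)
The plan is to express membership in $\mathcal{U}(G)$ as a statement with a universal quantifier over a Polish space followed by a Borel condition, which yields coanalyticity. Recall that by Definition \ref{unique}, a closed set $E$ is a set of uniqueness iff every $S \in C_r^\ast(G)$ with $supp^\ast(S) \subseteq E$ is zero; equivalently, $E$ is a set of multiplicity iff there exists a nonzero $S \in C_r^\ast(G)$ with $supp^\ast(S) \subseteq E$. By the scaling invariance of this condition, it suffices to quantify over the unit ball. So I would first fix the Polish spaces to work in: $(B_1(VN(G)), w^\ast)$ is Polish (as established in the preamble to this subsection), $C^\ast(G)$ is Polish, and $\mathcal{F}(G)$ with the Fell topology is a compact metrizable space inducing the Effros Borel structure. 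The reduced C$^\ast$-algebra $C_r^\ast(G) \subseteq VN(G)$ is norm-closed, hence a Borel subset of $(B_1(VN(G)), w^\ast)$ (a norm-closed set is $w^\ast$-$F_\sigma$ in the unit ball of a separable dual, via the norm being $w^\ast$-lower semicontinuous on bounded sets). Thus $\{(S,E) : S \in B_1(C_r^\ast(G))\}$ is Borel.

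The key step is to show that the relation $R(S,E) \equiv ``supp^\ast(S) \subseteq E"$ is Borel in $(B_1(VN(G)), w^\ast) \times (\mathcal{F}(G), \text{Fell})$. Using Lemma \ref{propsup}(ii), $supp^\ast(S) \subseteq E$ holds iff for every $u$ in a fixed countable norm-dense $\mathbb{Q}$-linear subset $\mathcal{D}$ of $A(G) \cap C_c(G)$ with $\mathrm{supp}(u) \cap E = \emptyset$, one has $\langle u, S\rangle = 0$. For fixed $u \in \mathcal{D}$ with compact support $K_u$, the condition $K_u \cap E = \emptyset$ is open in the Fell topology (it is one of the basic Fell-open conditions $F \cap K_u = \emptyset$), and the condition $\langle u, S \rangle = 0$ is $w^\ast$-closed in $S$; hence for each $u$ the set $\{(S,E) : K_u \cap E = \emptyset \Rightarrow \langle u,S\rangle = 0\}$ is Borel, and $R$ is the countable intersection over $u \in \mathcal{D}$, hence Borel. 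One subtlety to check here is that density of $\mathcal{D}$ in $A(G) \cap C_c(G)$ together with the explicit "smallest closed set" characterization genuinely lets us replace the quantifier over all $u$ by a quantifier over $\mathcal{D}$ — a continuity/approximation argument using that $\langle \cdot, S\rangle$ is norm-continuous on $A(G)$ and that supports can be controlled. I also need $A(G) \cap C_c(G)$ to be norm-dense in a suitable sense and to have a countable dense subset, which follows from separability of $A(G)$ (established earlier) and regularity of the Fourier algebra.

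Putting it together: $E \in \mathcal{M}(G)$ iff $\exists S \in B_1(VN(G))\ \big[ S \neq 0 \ \wedge\ S \in C_r^\ast(G)\ \wedge\ R(S,E)\big]$, which is the projection of a Borel subset of $(B_1(VN(G)), w^\ast) \times \mathcal{F}(G)$ along the Polish factor $B_1(VN(G))$ — here $S \neq 0$ is $w^\ast$-Borel (indeed $G_\delta$) since the norm is $w^\ast$-lower semicontinuous. Hence $\mathcal{M}(G)$ is analytic and $\mathcal{U}(G) = \mathcal{F}(G) \setminus \mathcal{M}(G)$ is coanalytic. The main obstacle I anticipate is the careful verification that $C_r^\ast(G)$ (which is defined as a norm-closure inside $VN(G)$) is a Borel subset of the $w^\ast$-space $B_1(VN(G))$ and that the reduction of the quantifier over $A(G) \cap C_c(G)$ to a countable set is valid; everything else is a routine check that Fell-basic conditions and $w^\ast$-closed linear conditions are Borel.
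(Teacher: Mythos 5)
Your overall strategy is the paper's: realize $\mathcal{M}(G)$ as the projection onto $\mathcal{F}(G)$ of a definable subset of $(B_{1}(VN(G)),w^{\ast})\times(\mathcal{F}(G),\text{Fell})$ cut out by the conditions $S\neq 0$, $S\in C_{r}^{\ast}(G)$ and $supp^{\ast}(S)\subseteq E$. However, two of your justifications fail as stated. First, the parenthetical principle ``a norm-closed set is $w^{\ast}$-$F_{\sigma}$ in the unit ball of a separable dual'' is false: in $B_{1}(\ell^{\infty})=B_{1}((\ell^{1})^{\ast})$ the indicators $\chi_{A}$, $A\subseteq\mathbb{N}$, form a $1$-separated family, so every subfamily is norm-closed, while $A\mapsto\chi_{A}$ is a $w^{\ast}$-homeomorphism of $2^{\mathbb{N}}$ onto its image; a non-Borel family of sets $A$ therefore yields a norm-closed subset of the ball that is not $w^{\ast}$-Borel. (And if ``separable dual'' is read as $VN(G)$ being norm-separable, that premise already fails, e.g. $VN(\mathbb{T})\approx\ell^{\infty}(\mathbb{Z})$.) Your conclusion is nevertheless true, but the ingredient that makes it work is norm-separability of $C_{r}^{\ast}(G)$ itself (the norm closure of $\lambda(L^{1}(G))$ with $L^{1}(G)$ separable): taking $D\subseteq C_{r}^{\ast}(G)$ countable norm-dense, one has $C_{r}^{\ast}(G)\cap B_{1}(VN(G))=\bigcap_{n}\bigcup_{d\in D}\lbrace S\in B_{1}(VN(G)):||S-d||\leq 1/n\rbrace$, a $w^{\ast}$-$F_{\sigma\delta}$ set because norm-closed balls are $w^{\ast}$-closed. (The paper needs less: it only proves this set analytic, as the image of a Polish subset of $C^{\ast}(G)$ under the $w^{\ast}$-continuous map $\Upsilon$, which is all the projection argument requires.)

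The second and more serious gap is the reduction of $supp^{\ast}(S)\subseteq E$ to a countable quantifier. For an arbitrary countable norm-dense ($\mathbb{Q}$-linear) subset $\mathcal{D}\subseteq A(G)\cap C_{c}(G)$ the claimed equivalence is simply false: already for $G=\mathbb{T}$ take $\mathcal{D}$ to be the trigonometric polynomials with coefficients in $\mathbb{Q}+i\mathbb{Q}$, which are norm-dense in $A(\mathbb{T})=A(\mathbb{T})\cap C_{c}(\mathbb{T})$, yet every nonzero member vanishes at only finitely many points and so has support equal to $\mathbb{T}$; hence for every nonempty $E$ your countable condition holds vacuously and the countable intersection strictly contains the relation $R$. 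So $\mathcal{D}$ must be built to encode support information --- e.g. using regularity of $A(G)$ and a countable basis, including for each basic pair $K\subseteq\mathcal{U}$ an element equal to $1$ near $K$ and supported in $\mathcal{U}$, and closing under products with a dense sequence --- and the approximation argument you allude to must then actually be carried out; this is the real content of the step, not a routine check. Alternatively you can avoid it exactly as the paper does: show directly that $\Gamma_{2}=\lbrace(S,E):supp^{\ast}(S)\subseteq E\rbrace$ is closed, since if $(S_{n},E_{n})\rightarrow(S,E)$ and $u\in A(G)\cap C_{c}(G)$ has $\text{supp}(u)\cap E=\emptyset$, then eventually $E_{n}$ lies in the Fell-open set $\lbrace F:F\cap\text{supp}(u)=\emptyset\rbrace$, so $\langle u,S_{n}\rangle=0$ for large $n$ and $\langle u,S\rangle=0$ in the $w^{\ast}$-limit. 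With either repair, the remaining steps (scaling into the unit ball, openness of $\lbrace S\neq 0\rbrace$, projection of an analytic set being analytic) are correct and coincide with the paper's Step 3.
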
\begin{proof} \textbf{Step 1} : Consider $C^{\ast}(G)$ and $C^{\ast}_{r}(G)$ respectively as the completions of $L^{1}(G)$ under the norm $||.||_{\ast}$ and $||.||_{\lambda}$ (as usual, $||f||_{\lambda}:=||\lambda (f)||$). Since $||.||_{\lambda}\leq ||.||_{\ast}$ this entails a continuous surjective map, $\Upsilon : C^{\ast}(G)\rightarrow C^{\ast}_{r}(G)\subseteq\mathcal{L}(L^{2}(G))$, taking the operator norm in the codomain. Note that $C:= B_{1}(VN(G))\cap C_{r}^{\ast}(G)$ is closed in the operator norm and thus, since $\Upsilon$ is continuous, $P:=\Upsilon^{-1}(C)$ is also closed. As a closed subspace of a Polish space, it follows that $P$ is also Polish. Let's denote the restriction of $\Upsilon$ to $P$ still as $\Upsilon$. Then, $\Upsilon : P\rightarrow B_{1}(VN(G))$ is a function whose image is $B_{1}(VN(G))\cap C^{\ast}_{r}(G)$, as $\Upsilon$ is surjective. Since $\Upsilon$ was continuous with respect to the operator norm, it will remain continuous with respect to the $w^{\ast}$-topology in the range. Thus : $$\Gamma_{1} := C_{r}^{\ast}(G)\cap B_{1}(VN(G))\subseteq (B_{1}(VN(G)),w^{\ast})\ \text{is analytic}$$\textbf{Step 2} Considering $\mathcal{F}(G)$ with the Fell topology and $(B_{1}(VN(G)),w^{\ast})$, the following set is Borel (even closed) : $$\Gamma_2 =\lbrace (S,E)\in B_{1}(VN(G))\times\mathcal{F}(G) : supp^{\ast}(S)\subseteq E\rbrace$$ Let $\lbrace (S_n,E_n)\rbrace\subseteq\Gamma_{2}$ such that $(S_n,E_n)\rightarrow (S,E)$. It is enough to prove that $supp^{\ast}(S)\subseteq E$. We use the characterization of $supp^{\ast}(T)$ given in Lemma \ref{propsup} for $T\in VN(G)$. Let $u\in A(G)\cap C_{c}(G)$ such that $\text{supp}(u)\cap E=\emptyset$ and $\mathcal{W}=\lbrace F\in\mathcal{F}(G):F\cap\text{supp}(u)=\emptyset\rbrace$ which is an open neighborhood of $E$ (in the Fell topology). Since $E_n\rightarrow E$, then $E_{k}\subseteq\mathcal{W}$ for all sufficiently large $k$. Since $supp^{\ast}(S_k)\subseteq E_k$, then for all sufficiently large $k$ one has that $\langle u,S_k\rangle = 0$. On the other hand, since $S_{n}\xrightarrow{w^{\ast}}S$, it follows that $\langle u,S\rangle = 0$ and thus $supp^{\ast}(S)\subseteq E$ as we wanted.\\ \\ \textbf{Step 3}: Define $P\subseteq B_{1}(VN(G))\times\mathcal{F}(G)$ as the following subset : $$P=\lbrace (S,E) : (S,E)\in\Gamma_{2}, S\neq 0\rbrace\cap\pi_{1}^{-1}(\Gamma_{1})$$ where $\pi_{i}$ ($i=1,2$) denotes the projection onto the factors of the product $B_{1}(VN(G))\times\mathcal{F}(G)$. By  Step 1, $\Gamma_{1}$ is analytic and thus so is $\pi_{1}^{-1}(\Gamma_{1})$ since $\pi_1$ is Borel.\footnote{If $X,Y$ are Polish, $f:X\rightarrow Y$ a Borel function, $A\subseteq X$ and $B\subseteq Y$ analytic, then $f(A)$ and $f^{-1}(B)$ are also analytic (cf \cite{3}, Theorem 14.4).} By Step 2, and since the condition $S\neq 0$ is Borel and the intersection of analytic sets is analytic, $P$ is analytic. Again, since $\pi_2$ is Borel, it follows that $\pi_{2}(P)$ is analytic. Note that : $$\pi_{2}(P)=\mathcal{M}(G)$$ and we conclude that $\mathcal{U}(G)$ is coanalytic.\end{proof}

\noindent Given a class $\Gamma$ of sets in a standard Borel space $X$ and $A\subseteq X$, we say that $A$ is Borel $\Gamma$-hard if for every standard Borel space $Y$ and $B\in\Gamma(Y)$, there is a Borel function $f:Y\rightarrow X$ such that $B=f^{-1}(A)$. This definition is very similar to the one of $\Gamma$-hardness, except that we use Borel reductions instead of continuous reductions. In fact, if $X$ is Polish then $A\subseteq X$ is $\Sigma_{1}^{1}$-hard if and only if $A$ is Borel $\Sigma_{1}^{1}$-hard (cf. \cite{3}, pp. 207). Evidently, the same holds for $\Pi_{1}^{1}$. By Theorem \ref{d=1}, $\mathcal{U}(\mathbb{T})\subseteq\mathcal{F}(\mathbb{T})$ is $\Pi_{1}^{1}$-hard and thus, since $\mathcal{F}(\mathbb{T})$ is Polish, it is also Borel $\Pi_{1}^{1}$-hard. By Theorem \ref{hardMO}, the same can be said about $\mathcal{U}_{0}(\mathbb{T})$.\\ \noindent Now let $X$ be locally compact and $A\subseteq X$ such that there is a Borel function $f:\mathcal{F}(\mathbb{T})\rightarrow\mathcal{F}(X)$ such that $f^{-1}(A)=\mathcal{U}(\mathbb{T})$ or $f^{-1}(A)=\mathcal{U}_{0}(\mathbb{T})$. It follows that $A$ is Borel $\Pi_{1}^{1}$-hard and thus, $\Pi_{1}^{1}$-hard.

\begin{thm}\label{M} Let $G$ be a connected locally compact Lie group. Then, $\mathcal{U}(G)$ is $\Pi_{1}^{1}$-complete.\end{thm}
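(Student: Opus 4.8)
The plan is to combine the coanalyticity already established with a $\Pi_{1}^{1}$-hardness argument that transports the hardness of $\mathcal{U}(\mathbb{T})$ into $G$. Coanalyticity is immediate from Theorem~\ref{coanalM}, since a connected Lie group is locally compact and second countable; so it only remains to prove $\Pi_{1}^{1}$-hardness, and by Theorem~\ref{d=1} together with the discussion preceding this statement it is enough to exhibit a Borel map $f:\mathcal{F}(\mathbb{T})\to\mathcal{F}(G)$ with $f^{-1}(\mathcal{U}(G))=\mathcal{U}(\mathbb{T})$. (We tacitly exclude the degenerate case $\dim G=0$, where $G=\{e\}$.)

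The guiding example is $G=\mathbb{T}^{d}$: the assignment $E\mapsto E\times\mathbb{T}^{d-1}$ is Fell-continuous, and since the ambient group $\mathbb{T}^{d-1}$ is a set of multiplicity (it supports $\lambda(h)$ for any $0\neq h\in L^{1}(\mathbb{T}^{d-1})$), Corollary~\ref{prod} gives that $E\times\mathbb{T}^{d-1}$ is a set of uniqueness if and only if $E$ is; with Corollary~\ref{isoMsets} this already handles every group continuously isomorphic to a torus. The same mechanism also shows that $\mathcal{U}(\mathbb{R})$ is $\Pi_{1}^{1}$-hard: after a rescaling the sets produced by the reduction of Theorem~\ref{d=1} may be taken to lie inside a proper sub-arc of $\mathbb{T}$, which we identify with a bounded open interval $I\subseteq\mathbb{R}$; since being a set of uniqueness is a local property (a compact subset of $I$ is a set of uniqueness for $\mathbb{R}$ iff its image is a set of uniqueness for $\mathbb{T}$), the lifted sets give a Borel reduction of $\mathcal{K}(\mathbb{Q}')$ into $\mathcal{F}(\mathbb{R})$ whose preimage of $\mathcal{U}(\mathbb{R})$ is exactly $\mathcal{K}(\mathbb{Q}')$.

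For a general connected Lie group $G$ I would proceed in two steps. First, a structural one: $G$ contains a closed subgroup $H$ topologically isomorphic to $\mathbb{R}$ or to $\mathbb{T}$. Indeed, for a nonzero $X$ in the Lie algebra of $G$ the one-parameter subgroup $t\mapsto\exp(tX)$ is either periodic, hence factors through a continuous injection $\mathbb{T}\to G$ that is a homeomorphism onto a closed subgroup, or has closure a connected abelian subgroup $\cong\mathbb{R}^{a}\times\mathbb{T}^{b}$ with $a+b\geq 1$, from which one extracts a closed copy of $\mathbb{R}$ (if $a\geq 1$) or of $\mathbb{T}$ (if $a=0$) as a coordinate subgroup. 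Second, a thickening: $H$ being a closed submanifold of $G$, fix a chart (or tubular neighbourhood) of $G$ in which $H$ is straightened, so that a set $E'$ lying in a small arc/interval of $H$ inflates to a closed set $\Phi(E')\subseteq G$ which, inside that chart, is the analytic image of $E''\times(0,1)^{d-1}$ with $E''$ a copy of $E'$. As in the torus model, the transverse factor $(0,1)^{d-1}$ contributes a set of multiplicity, and one transfers through the chart to conclude that $\Phi(E')$ is a set of uniqueness for $G$ iff $E'$ is a set of uniqueness for $H$. Composing the base-case reduction for $H$ (that of Theorem~\ref{d=1}, or its $\mathbb{R}$-analogue above) with $\Phi$ yields the required $f$, and $\mathcal{U}(G)$ is then $\Pi_{1}^{1}$-complete.

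The main obstacle is the transference in the second step. For $\mathbb{T}^{d}$ (or whenever $H$ is a direct factor of $G$) it is simply Corollary~\ref{prod}; for general $G$, $H$ need not be complemented, so one must either establish a genuine locality principle for sets of uniqueness on Lie groups --- ``$E$ is a set of uniqueness iff every point of $G$ has a neighbourhood meeting $E$ in a set of uniqueness'' --- together with invariance of this property under analytic chart changes, or else run the operator multiplicity machinery of Section~3 directly on the $\omega$-closed set $\Phi(E')^{\ast}$, combining the behaviour of the slice maps under $\rho$ with Theorem~\ref{inv} applied to the chart diffeomorphism (whose push-forward of Lebesgue measure is equivalent to Haar on the chart). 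The $\mathbb{R}$-versus-$\mathbb{T}$ bookkeeping in the base case rests on the same locality and is comparatively routine.
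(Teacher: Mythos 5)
Your coanalyticity step and your torus model ($E\mapsto E\times\mathbb{T}^{d-1}$, or equivalently $E\mapsto E^{k}$, via Corollary \ref{prod} and Corollary \ref{isoMsets}) agree with the paper, but from there you diverge onto a route --- embed a closed copy of $\mathbb{R}$ or $\mathbb{T}$ in $G$, thicken along a tubular neighbourhood, and transfer uniqueness through a chart --- whose crucial step you yourself flag as ``the main obstacle'' and never prove. That is a genuine gap, not a routine verification. The transference tools available in the paper (Theorem \ref{inv}, Corollary \ref{isoMsets}) work only for maps compatible with the group structure: they apply because for a continuous homomorphism $q$ one has $(q^{-1}(E))^{\ast}=(q\times q)^{-1}(E^{\ast})$, so the operator-theoretic criterion of Theorem \ref{teorema1} can be pulled back. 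A chart diffeomorphism does not intertwine the group operations, so the $\omega$-closed set $\Phi(E')^{\ast}$ is not of the form $\{(x,y):(\varphi(x),\Psi(y))\in E\}$ for any set $E$ built from $E'$ in the model, and Theorem \ref{inv} simply does not apply to it. Likewise, the ``locality principle'' you invoke (for general Lie groups, and already in your $\mathbb{R}$-versus-$\mathbb{T}$ base case, where you assert that a compact subset of an arc is a $U$-set for $\mathbb{R}$ iff its image is one for $\mathbb{T}$) is nowhere established in the paper and would itself require a nontrivial argument in the nonabelian operator framework. Your structural claim that every nontrivial connected Lie group contains a closed copy of $\mathbb{R}$ or $\mathbb{T}$ is fine, but without the transference the reduction $f$ is not constructed.

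The paper avoids all local/chart analysis by pulling back along global group quotient homomorphisms, which is exactly what the Section 3 machinery handles. For the abelian case $G\approx\mathbb{R}^{n}\times\mathbb{T}^{m}$ it first gets hardness of $\mathcal{U}(\mathbb{T}^{n+m})$ from $E\mapsto E^{n+m}$ (Corollary \ref{prod}), then uses the Borel map $E\mapsto q^{-1}(E)$ for the quotient $q:\mathbb{R}^{n}\times\mathbb{T}^{m}\rightarrow\mathbb{T}^{n+m}$, where Theorem \ref{inv} plus Theorem \ref{teorema1} give $E\in\mathcal{U}(\mathbb{T}^{n+m})\Leftrightarrow q^{-1}(E)\in\mathcal{U}(\mathbb{R}^{n}\times\mathbb{T}^{m})$; for general connected $G$ it applies the same preimage argument to the abelianization $q:G\rightarrow G/[G,G]$, whose target is a connected abelian Lie group. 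So the missing idea in your proposal is to reduce \emph{from} the abelian quotient of $G$ by taking preimages under a homomorphism, rather than reducing \emph{into} $G$ through a subgroup and a chart; the former is covered by the paper's preservation theorems, the latter is not, and you have not supplied the locality or chart-invariance results that would be needed to make it work.
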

\begin{proof} By Theorem \ref{coanalM}, it is enough to prove that $\mathcal{U}(G)$ is $\Pi_{1}^{1}$-hard. We divide the analysis in two cases : \\ \\ \textbf{Step 1}: Suppose that $G$ is abelian and thus, $G\approx\mathbb{R}^{n}\times\mathbb{T}^{m}$. By Corollary \ref{isoMsets} it suffices to show that $\mathcal{U}(\mathbb{R}^{n}\times\mathbb{T}^{m})$ is $\Pi_{1}^{1}$-hard. \\ Firstly, note that for any positive integer $k$, the function $g:\mathcal{F}(\mathbb{T})\rightarrow\mathcal{F}(\mathbb{T}^{k})$ such that $E\mapsto E^{k}$ is measurable and, by Theorem \ref{prod}, is such that $\mathcal{U}(\mathbb{T})=g^{-1}(\mathcal{U}(\mathbb{T}^{k}))$. Thus, $\mathcal{U}(\mathbb{T}^{k})$ is $\Pi_{1}^{1}$-hard. \\ Now, let $q:\mathbb{R}^{n}\times\mathbb{T}^{m}\rightarrow\mathbb{T}^{n+m}$ be the quotient map and $f:\mathcal{F}(\mathbb{T}^{n+m})\rightarrow\mathcal{F}(\mathbb{R}^{n}\times\mathbb{T}^{m})$ be given by $E\mapsto q^{-1}(E)$. \\ Note that this map is measurable. Indeed, if $A=\lbrace F\in\mathcal{F}(\mathbb{R}^{n}\times\mathbb{T}^{m}):F\cap \mathcal{U}\neq\emptyset\rbrace$, then $f^{-1}(A)=\lbrace F\in\mathcal{F}(\mathbb{T}^{n+m}):F\cap q(\mathcal{U})\neq\emptyset\rbrace$. Since $q$ is the quotient map by an action of a subgroup, $q(\mathcal{U})$ is an open for each open set $\mathcal{U}$ and we may conclude that $f$ is measurable. Furthermore, by Theorem \ref{inv} combined with Theorem \ref{teorema1} one has that $E\in\mathcal{U}(\mathbb{T}^{n+m})$ if and only if $q^{-1}(E)\in\mathcal{U}(\mathbb{R}^{n}\times\mathbb{T}^{m})$ and we are done. \\ \\ \textbf{Step 2}: To consider the general case, note that $G/[G,G]$ is a connected abelian Lie group. By Step 1, $\mathcal{U}(G/[G,G])$ is $\Pi_{1}^{1}$-hard and we use the same argument as before with the quotient map $q:G\rightarrow G/[G,G]$.\end{proof}

\subsubsection{$M_0$-sets}

\begin{thm}\label{coanM0} Let $G$ be a locally compact and second countable group. Then, $\mathcal{U}_{0}(G)$ is coanalytic.\end{thm}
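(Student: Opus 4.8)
The plan is to mimic the proof of Theorem \ref{coanalM}: I will show that $\mathcal{M}_{0}(G)$ is analytic, whence its complement $\mathcal{U}_{0}(G)$ is coanalytic. By Definition \ref{extendeduniq}, a closed set $E$ is an $M_{0}$-set precisely when there is a measure $\mu\in M(G)$ with $\mathrm{supp}(\mu)\subseteq E$, $\lambda(\mu)\neq 0$ and $\lambda(\mu)\in C_{r}^{\ast}(G)$; since scaling $\mu$ by a nonzero scalar affects none of these conditions, we may require $\mu\in B_{1}(M(G))$. Working in the Polish spaces $(B_{1}(M(G)),w^{\ast})$ and $\mathcal{F}(G)$ (the latter with the Fell topology, which is compact metrizable here since $G$ is second countable), it suffices to prove that
$$P=\lbrace (\mu,E)\in B_{1}(M(G))\times\mathcal{F}(G) : \mathrm{supp}(\mu)\subseteq E,\ \lambda(\mu)\neq 0,\ \lambda(\mu)\in C_{r}^{\ast}(G)\rbrace$$
is analytic, for then $\mathcal{M}_{0}(G)=\pi_{2}(P)$, where $\pi_{2}$ is the (continuous) projection onto $\mathcal{F}(G)$, is analytic as the continuous image of an analytic set.

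I would verify the three defining conditions of $P$ separately, reusing the ingredients already assembled in the proof of Theorem \ref{coanalM}. For the condition $\lambda(\mu)\in C_{r}^{\ast}(G)$: Step~1 of that proof shows that $\Gamma_{1}=C_{r}^{\ast}(G)\cap B_{1}(VN(G))$ is an analytic subset of $(B_{1}(VN(G)),w^{\ast})$; since $\|\lambda(\mu)\|\le\|\mu\|$, the representation restricts to a continuous map $\lambda\colon (B_{1}(M(G)),w^{\ast})\to (B_{1}(VN(G)),w^{\ast})$, so $\lbrace \mu : \lambda(\mu)\in C_{r}^{\ast}(G)\rbrace=\lambda^{-1}(\Gamma_{1})$ is analytic. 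For the condition $\lambda(\mu)\neq 0$: since $\{0\}$ is closed in $(VN(G),w^{\ast})$ and $\lambda$ is continuous, $\lbrace \mu:\lambda(\mu)\neq 0\rbrace$ is open, in particular Borel. For the support condition: by Lemma \ref{propsup}(iii) we have $\mathrm{supp}(\mu)=\mathrm{supp}^{\ast}(\lambda(\mu))$, so $\lbrace (\mu,E):\mathrm{supp}(\mu)\subseteq E\rbrace=(\lambda\times\mathrm{id})^{-1}(\Gamma_{2})$ where $\Gamma_{2}=\lbrace (S,E)\in B_{1}(VN(G))\times\mathcal{F}(G):\mathrm{supp}^{\ast}(S)\subseteq E\rbrace$ is the closed set exhibited in Step~2 of the proof of Theorem \ref{coanalM}; as $\lambda\times\mathrm{id}$ is continuous, this preimage is closed, hence Borel. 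Intersecting, $P$ is the intersection of two Borel sets with an analytic set, hence analytic, and $\pi_{2}(P)=\mathcal{M}_{0}(G)$ by the reformulation above. Therefore $\mathcal{U}_{0}(G)=\mathcal{F}(G)\setminus\mathcal{M}_{0}(G)$ is coanalytic.

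The genuinely non-routine inputs are both imported: the analyticity of $\Gamma_{1}$ (the crux of Theorem \ref{coanalM}, coming ultimately from the continuous surjection $C^{\ast}(G)\to C_{r}^{\ast}(G)$) and the fact that $\mathrm{supp}(\mu)=\mathrm{supp}^{\ast}(\lambda(\mu))$, which is what lets us recycle the closed set $\Gamma_{2}$ rather than analyzing the map $\mu\mapsto\mathrm{supp}(\mu)$ directly — this map is badly discontinuous for the $w^{\ast}$-topology, so that identity is the point at which the argument could otherwise stall. Everything else is a straightforward reshuffling of the $M$-set argument, now with $B_{1}(M(G))$ in place of the intermediate $C^{\ast}$-algebra and with $\lambda$ inserted to pass from measures to $VN(G)$.
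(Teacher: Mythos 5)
Your proposal is correct and follows essentially the same route as the paper: exhibit $\mathcal{M}_{0}(G)$ as $\pi_{2}(P)$ for an analytic $P\subseteq B_{1}(M(G))\times\mathcal{F}(G)$ cut out by the three conditions, importing the analyticity of $C_{r}^{\ast}(G)\cap B_{1}(VN(G))$ and the closedness of the support relation from the proof of Theorem \ref{coanalM}, together with the $w^{\ast}$-continuity of $\lambda$ on the unit ball and the normalization argument identifying $\pi_{2}(P)$ with $\mathcal{M}_{0}(G)$. The only cosmetic difference is that you pull back $\Gamma_{2}$ via $\lambda\times\mathrm{id}$ (using Lemma \ref{propsup}(iii)) where the paper re-runs the closedness argument directly for $\Lambda_{2}=\lbrace(\mu,E):supp^{\ast}(\lambda(\mu))\subseteq E\rbrace$; these are the same argument.
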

\begin{proof} \textbf{Step 1} : Since the ball $B_{1}(VN(G))$ is $w^{\ast}$-closed by Banach-Alaoglu Theorem, it follows that $P:=\lambda^{-1}(B_{1}(VN(G)))\subseteq (B_{1}(M(G)),w^{\ast})$ is closed and thus, Polish. Consequently, $\lambda(P)\subseteq (B_{1}(VN(G)),w^{\ast})$ is analytic. By the proof of Theorem \ref{coanalM}, $C^{\ast}_{r}(G)\cap B_{1}(VN(G))$ is also analytic. Since intersection of analytic sets is analytic, then the following is analytic in $(B_{1}(VN(G)),w^{\ast})$ : $$A:=\lambda(P)\cap C_{r}^{\ast}(G)\cap B_{1}(VN(G))\setminus\lbrace 0\rbrace$$ It follows that the following is analytic in $(B_{1}(M(G)),w^{\ast})$ : $$\Lambda_{1}:=\lambda^{-1}(A)=\lbrace\mu\in B_{1}(M(G)):\lambda(\mu)\in C_{r}^{\ast}(G)\cap B_{1}(VN(G))\setminus\lbrace 0\rbrace\rbrace$$ \textbf{Step 2} : Consider $(B_{1}(M(G)),w^{\ast})$ and $\mathcal{F}(G)$ with the Fell topology and let : $$\Lambda_{2} =\lbrace (\mu,E): supp^{\ast}(\lambda(\mu))\subseteq E\rbrace$$ where $\pi_{i}$ (for $i=1,2$) is the projection onto the factors of $B_{1}(M(G))\times\mathcal{F}(G)$. The proof that $\Lambda_{2}$ is closed, hence analytic, goes exactly as in the proof that $\Gamma_{2}$ is closed in Theorem \ref{coanalM}. Indeed, as pointed out earlier, $\mu_{n}\xrightarrow{w^{\ast}}\mu$ implies that $\lambda(\mu_{n})\xrightarrow{w^{\ast}}\lambda(\mu)$ and we can reproduce the argument used in the proof of Theorem \ref{coanalM}. \\ \\ \noindent \textbf{Step 3} : Define $P\subseteq B_{1}(M(G))\times\mathcal{F}(G)$ as the following subset : $$P=\lbrace (\mu,E):\mu\in M(E),\ \lambda(\mu)\in C_{r}^{\ast}(G)\cap B_{1}(VN(G)), \ \lambda(\mu)\neq 0\rbrace $$ Since $P=\pi_{1}^{-1}(\Lambda_{1})\cap\Lambda_{2}$ it follows by  Step 1) and Step 2) that $P$ is analytic. Thus, to prove that $\mathcal{U}_{0}(G)$ is coanalytic it is enough to show that $\mathcal{M}_{0}(G)=\pi_{2}(P)$ : \\ \noindent On one hand, if $E\in\pi_{2}(P)$, then there is a measure $\mu\in B_{1}(M(G))$ such that $(\mu,E)\in P$. By definition of $P$, this means that $\lambda(M(E))\cap C_{r}^{\ast}(G)\neq\lbrace 0\rbrace$ and thus, $E$ is a $M_0$-set.\\ Conversely, suppose that $E\subseteq G$ is a $M_0$-set and thus there is some $\mu\in M(E)$ such that $\lambda(\mu)\in C_{r}^{\ast}(G)\setminus\lbrace 0\rbrace$. In particular, $\mu\neq 0$. Then, $\tilde{\mu}=\frac{\mu}{||\mu||}\in B_{1}(M(G))\cap M(E)$ and $T:=\lambda(\tilde{\mu})\in C_{r}^{\ast}(G)\setminus\lbrace 0\rbrace$. If $||T||\leq 1$, then $\lambda(\tilde{\mu})\in C_{r}^{\ast}(G)\cap B_{1}(VN(G))$. Otherwise, we consider $\mu'=\frac{\tilde{\mu}}{||T||}$ and $\lambda(\mu')$. In any case, there is a measure $\mu\in B_{1}(M(G))\cap M(E)$ such that $\lambda(\mu)\in C_{r}^{\ast}(G)\cap B_{1}(VN(G))$ and $\lambda(\mu)\neq 0$. Thus, $E\in\pi_{2}(P)$\end{proof}

\begin{thm}\label{M0} Let $G$ be a connected abelian Lie group. Then, $\mathcal{U}_{0}(G)$ is $\Pi_{1}^{1}$-complete.\end{thm}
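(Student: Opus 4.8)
The plan is to mirror the proof of Theorem \ref{M} for $\mathcal{U}(G)$, replacing the $M$-set preservation results of Section 3.2.1 by their $M_0$-counterparts from Section 3.2.2 and replacing the $\Pi_1^1$-hardness of $\mathcal{U}(\mathbb{T})$ by that of $\mathcal{U}_0(\mathbb{T})$ (Theorem \ref{hardMO}). By Theorem \ref{coanM0}, $\mathcal{U}_0(G)$ is coanalytic, so it remains to prove that it is $\Pi_1^1$-hard; and by the discussion preceding Theorem \ref{M} (Borel $\Pi_1^1$-hardness coincides with $\Pi_1^1$-hardness on Polish/standard Borel spaces) it suffices to produce a Borel map $h:\mathcal{F}(\mathbb{T})\to\mathcal{F}(G)$ with $\mathcal{U}_0(\mathbb{T})=h^{-1}(\mathcal{U}_0(G))$.

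First I would reduce to the model case $G=\mathbb{R}^n\times\mathbb{T}^m$. Since $G$ is a connected abelian Lie group it is isomorphic, as a topological group, to $\mathbb{R}^n\times\mathbb{T}^m$ for some $n,m\geq 0$, and I would record the $M_0$-analogue of Corollary \ref{isoMsets}: if $\varphi:G_1\to G_2$ is a continuous isomorphism of locally compact second countable \emph{abelian} groups and $E\subseteq G_2$ is closed, then $E$ is an $M_0$-set iff $\varphi^{-1}(E)$ is. This follows from the Rajchman description of $M_0$-sets in Remark \ref{remarkk}(ii): the dual map $\widehat{\varphi}:\widehat{G_2}\to\widehat{G_1}$ is again a topological isomorphism, and for $\mu\in M(G_1)$ one has $\widehat{\varphi_{\ast}\mu}=\widehat{\mu}\circ\widehat{\varphi}$, so $\varphi_{\ast}\mu$ is Rajchman iff $\mu$ is, while $\mathrm{supp}(\varphi_{\ast}\mu)=\varphi(\mathrm{supp}\,\mu)$ and $\varphi_{\ast}\mu\neq 0$ iff $\mu\neq 0$; pushing forward (resp. pulling back) a witnessing nonzero Rajchman measure gives the equivalence. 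Transporting along the isomorphism $G\approx\mathbb{R}^n\times\mathbb{T}^m$ then reduces the problem to showing $\mathcal{U}_0(\mathbb{R}^n\times\mathbb{T}^m)$ is $\Pi_1^1$-hard, and we may assume $k:=n+m\geq 1$.

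Next I would build the reduction in two Borel steps. \emph{Step (a):} the map $g:\mathcal{F}(\mathbb{T})\to\mathcal{F}(\mathbb{T}^{k})$, $E\mapsto E^{k}$, is Borel (as observed in the proof of Theorem \ref{M}), and $\mathcal{U}_0(\mathbb{T})=g^{-1}(\mathcal{U}_0(\mathbb{T}^{k}))$: if $E\in\mathcal{U}_0(\mathbb{T})$, iterating Corollary \ref{prodM0}(ii) gives $E^{k}\in\mathcal{U}_0(\mathbb{T}^{k})$; if $E$ is an $M_0$-set, iterating Corollary \ref{prodM0}(i) gives that $E^{k}$ is an $M_0$-set, i.e. $E^{k}\notin\mathcal{U}_0(\mathbb{T}^{k})$. \emph{Step (b):} let $q:\mathbb{R}^n\times\mathbb{T}^m\to\mathbb{T}^{k}$ be the quotient map and $f:\mathcal{F}(\mathbb{T}^{k})\to\mathcal{F}(\mathbb{R}^n\times\mathbb{T}^m)$, $E\mapsto q^{-1}(E)$; this is well defined ($q$ is continuous, so $q^{-1}(E)$ is closed) and Borel, since for open $\mathcal{U}$ one has $f^{-1}(\{F:F\cap\mathcal{U}\neq\emptyset\})=\{E:E\cap q(\mathcal{U})\neq\emptyset\}$ and $q$ is an open map, exactly as in Theorem \ref{M}. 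By Corollary \ref{quocienteM0}, $E$ is an $M_0$-set iff $q^{-1}(E)$ is, that is $\mathcal{U}_0(\mathbb{T}^{k})=f^{-1}(\mathcal{U}_0(\mathbb{R}^n\times\mathbb{T}^m))$. Composing, $f\circ g$ is a Borel map witnessing $\mathcal{U}_0(\mathbb{T})=(f\circ g)^{-1}(\mathcal{U}_0(\mathbb{R}^n\times\mathbb{T}^m))$, and composing further with the isomorphism of Effros spaces induced by $\varphi$ gives the desired $h$. Hence $\mathcal{U}_0(G)$ is Borel $\Pi_1^1$-hard, therefore $\Pi_1^1$-hard, and combined with Theorem \ref{coanM0} it is $\Pi_1^1$-complete.

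The genuinely substantial content (coanalyticity in Theorem \ref{coanM0}, the product and quotient preservation theorems \ref{preservaprod} and \ref{quotientMO(2)}, and the hardness Theorem \ref{hardMO}) is already in hand, so the proof is essentially an assembly. The one point that still requires a short argument of its own is the isomorphism-invariance of $M_0$-sets used in the first reduction; everything else — Borel measurability of $g$ and $f$, and the two directions of each set-theoretic equivalence — is a routine consequence of Corollaries \ref{prodM0} and \ref{quocienteM0} together with the openness of $q$.
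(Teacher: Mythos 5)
Your proposal is correct and takes essentially the same route as the paper's proof: coanalyticity from Theorem \ref{coanM0}, hardness of $\mathcal{U}_{0}(\mathbb{T}^{k})$ via the map $E\mapsto E^{k}$ and Corollary \ref{prodM0}, transfer to $\mathbb{R}^{n}\times\mathbb{T}^{m}$ through the quotient reduction of Corollary \ref{quocienteM0}, and the pushforward/Rajchman argument you give for isomorphism invariance of $M_{0}$-sets is exactly the paper's Step 2. No gaps to report.
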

\begin{proof} By Theorem \ref{coanM0} it is enough to prove that $\mathcal{U}_{0}(G)$ is $\Pi_{1}^{1}$-hard. \\ \\ \noindent \textbf{Step 1}: By Theorem \ref{prodM0} and considering the map $h:\mathcal{F}(\mathbb{T})\mapsto\mathcal{F}(\mathbb{T}^{k})$ given by $E\mapsto E^k$ we conclude that $\mathcal{U}_{0}(\mathbb{T}^{k})$ is $\Pi_{1}^{1}$-hard. \\ \\ \noindent \textbf{Step 2} : Let $G_1$ and $G_2$ be locally compact abelian groups, $\varphi:G_{1}\rightarrow G_2$ be an isomorphism (of topological groups) and $\mu\in M(G_1)$. This induces a pushforward measure $\varphi_{\ast}\mu$ on $G_2$ and an isomorphism $\varphi_{\ast}:\hat{G_1}\rightarrow\hat{G_2}$ given by $\chi\mapsto\chi\circ\varphi^{-1}$ such that : $$\hat{\mu}(\chi)=\int_{G_{1}}\overline{\chi(x)}d\mu(x)=\int_{G_{2}}\overline{\varphi_{\ast}(\chi)(y)}d\varphi_{\ast}\mu(y)=\widehat{\varphi_{\ast}\mu}(\varphi_{\ast}(\chi))$$ Hence, if $\hat{\mu}\in C_{0}(\hat{G_1})$ it follows that $\widehat{\varphi_{\ast}\mu}\in C_{0}(\hat{G_2})$. If $E\subseteq G_1$ is a $M_0$-set there is some $\mu\in M(E)$ such that $\mu\neq 0$ is Rajchman and thus, $\varphi_{\ast}(\mu)\in M(\varphi(E))$ and is a non zero Rajchman measure implying that $\varphi(E)$ is also a $M_{0}$-set. Analogously, we may conclude that $E\subseteq G_1$ is a $M_{0}$-set if and only if $\varphi(E)$ is a $M_0$-set. \\ \\ \noindent \textbf{Step 3} : Any such $G$ is isomorphic to $\mathbb{R}^{n}\times\mathbb{T}^{m}$ for some integers $n,m\geq 0$ and consequently, by Step 2, it suffices to prove that $\mathcal{U}_{0}(\mathbb{R}^{n}\times\mathbb{T}^{m})$ is $\Pi_{1}^{1}$-hard. In order to do so, we appeal to Corollary \ref{quocienteM0} and consider the Borel reduction $f:\mathcal{F}(\mathbb{T}^{n+m})\rightarrow\mathcal{F}(\mathbb{R}^{n}\times\mathbb{T}^{m})$ given by $E\mapsto q^{-1}(E)$, where $q:\mathbb{R}^{n}\times\mathbb{T}^{m}\rightarrow\mathbb{T}^{n+m}$ is the quotient map. \end{proof}

\section{Acknowledgements} \noindent The author would like to express gratitude to L. Turowska for her extremely helpful suggestions.

\end{document}